\newcommand{\nc}{\newcommand}
 \nc{\cl}{\centerline}
 \renewcommand{\l}{{\rm len}}
 \nc{\SL}{{\rm SL}}
 \nc{\hatQ}{{\hat Q}}
 \nc{\sgn}{{\rm sgn}}
 \newcommand{\id}{{\rm id}}
 \nc{\hatlambda}{{\hat\lambda}}
 \nc{\daggerlambda}{{\lambda^\dagger}}
 \newcommand{\barS}{{\bar S}}
 \newcommand{\barH}{{\overline H}}
  \newcommand{\barJ}{{\bar J}}
  \newcommand{\dotG}{{\dot{G}}}
    \newcommand{\dotL}{{\dot{L}}}
  \newcommand{\dotE}{{\dot{E}}}
  \newcommand{\dotH}{{\dot{H}}}
  \newcommand{\dotF}{{\dot{F}}}
  \newcommand{\resp}{{resp}}
  \newcommand{\Hec}{{\rm Hec}}
\nc\diag{{\rm diag}}
\renewcommand{\vert}{{\,|\,}}
\nc{\hatL}{{\hat L}}
\nc{\barE}{{\bar   E}}
\nc{\D}{{\mathcal D}}
\nc{\E}{{\mathcal E}}
\nc{\F}{{\mathcal F}}
\nc{\even}{{\rm e}}
\nc{\ep}{\epsilon}
\nc{\odd}{{\rm o}}
\nc{\Coker}{{\rm Coker}}
\nc{\olE}{{\overline E}}
\nc{\indBG}{{\rm ind}_B^G\,}
\nc{\indHG}{{\rm ind}_H^G\,}
\nc{\que}{{\mathbb Q}}
\nc{\barlambda}{{\bar\lambda}}
\nc{\barmu}{{\bar\mu}}
\nc{\barnu}{{\bar\nu}}
\nc{\bartau}{{\bar\tau}}
\nc{\barm}{{\bar m}}
\nc{\divind}{{\rm div.ind}}
\nc{\tl}{{\tilde{\lambda}}}
\nc{\Sym}{{\rm \Sigma}}
\nc{\Symm}{{\rm Sym}}
\renewcommand{\dim}{{\rm dim\,}}
\newcommand{\q}{\quad}
\newcommand{\de}{\delta}
\newcommand{\nat}{{\mathbb N}}
\newcommand{\Mod}{{\rm Mod}}
\renewcommand{\mod}{{\rm mod}}
\newcommand{\iso}{\cong}
\newcommand{\Sp}{{\rm Sp}}
\newcommand{\bs}{\bigskip}
\renewcommand{\vert}{\,|\,}
\renewcommand{\sgn}{{\rm sgn}}
\newcommand{\reg}{{\rm reg}}
\newcommand{\pol}{{\rm pol}}
\newcommand{\ind}{{\rm ind}}
\renewcommand{\vert}{\,|\,}
\newcommand{\zed}{{\mathbb Z}}
\newcommand{\Ext}{{\rm Ext}}
\newcommand{\End}{{\rm End}}
\newcommand{\Hom}{{\rm Hom}}
\newcommand{\cf}{{\rm cf}}
\renewcommand{\mod}{{\rm mod}}
\newcommand{\GL}{{\rm GL}}
\newcommand{\barM}{{\overline  M}}
\newcommand{\res}{{\rm{res\,}}}
\newcommand{\m}{{\rm{Mull}}}
\newcommand{\Mull}{{\rm Mull}}
\renewcommand{\mod}{{\rm{mod}}}
\nc{\geom}{{\rm geom}}
\nc{\rep}{{\rm rep}}
\newcommand{\core}{{\rm core}}
\newcommand{\ch}{{\rm ch\,}}
\renewcommand{\P}{{\mathcal  P}}
\newtheorem{definition}{Definition}[section]
\newtheorem{proposition}[definition]{Proposition}
\newtheorem{theorem}[definition]{Theorem}
\newtheorem{lemma}[definition]{Lemma}
\newtheorem{corollary}[definition]{Corollary}
\newtheorem{remark}[definition]{Remark}
\begin{document}



\centerline{\bf Composition Factors of Tensor Products}
\centerline{\bf of Truncated Symmetric Powers}

\bigskip

\centerline{Stephen Donkin  and Haralampos Geranios}

\bigskip

{\it Department of Mathematics, University of York, York YO10 5DD}\\

\medskip

{\tt stephen.donkin@york.ac.uk,  haralampos.geranios@york.ac.uk}

\bs

\centerline{10  July 2015}
\bs\bs\bs

\section*{Abstract}

\q    Let $G$ be the general linear group of degree $n$ over an algebraically closed field $K$ of characteristic $p>0$.  We study the $m$-fold  tensor product $\barS(E)^{\otimes m}$ of the truncated symmetric algebra $\barS(E)$ of the symmetric algebra $S(E)$ of the natural module $E$ for $G$. We are particularly interested in the set of partitions $\lambda$  occurring as the highest weight of a composition factor of 
$\barS(E)^{\otimes m}$.  We explain how the determination of these composition factors   is related to the determination of the set of composition factors of the $m$-fold tensor product $S(E)^{\otimes m}$ of the symmetric algebra.  We give a complete description of the  composition  factors of $\barS(E)^{\otimes m}$ in terms of \lq \lq distinguished" partitions.  

\q Our main interest is in the classical case, but since the quantised version is essentially no more difficult  we express our results in the general context throughout.

\section*{Introduction}

\bs

\q Let $K$ be an algebraically closed field of characteristic $p>0$.    The problem of finding the irreducible characters of a connected reductive group $G$ over $K$ is one of the main problems of representation theory. In characteristic $0$ the solution to this problem is enshrined in 
Weyl's character formula (see e.g. \cite{Jan},  II, Chapter 5) and for the general linear group in the theory of Schur symmetric functions (see e.g. \cite{EGS}, , Section 3.5). 

\q The problem in positive characteristic is often formulated in terms of the determination of decomposition numbers, i.e, the determination of the multiplicity  of the simple module $L(\mu)$, of highest weight $\mu$, as a composition factor of  the induced module $\nabla(\lambda)$, of highest weight $\lambda$. 

\q In this paper we concentrate on the case $G={\rm GL}_n(K)$, the general linear group of degree $n$, over $K$. We are interested in the tensor product $S^\lambda(E)=S^{\lambda_1}(E) \otimes \cdots \otimes S^{\lambda_m}(E)$, of symmetric powers of the natural module $E$. For fixed degree $r$, the formal characters of the modules $S^\lambda(E)$ are related to the formal characters of the $\nabla(\mu)$ (the Schur symmetric functions) by a certain  known  unitriangular matrix (the transpose of the Kostka matrix, see e.g., \cite{Mac},  Section 6, Table 1, entry $(2,4)$).    Hence,  the decomposition number problem would be solved if one could determine the composition factor multiplicities of the modules $S^\lambda (E)$. So this is a very important (and of course difficult) problem.   Here,  and in related work, we address the  problem of determining the set of composition factors of  $S^\lambda(E)$.

\q Let $m$ be a positive integer. Our method is to analyse first the tensor product of  $m$ truncated symmetric powers and then to use this to analyse  the tensor product of  $m$ symmetric powers. Here we  give an exposition of the general approach via the truncated symmetric powers.   For  general $m$, we give a complete list of the composition factors of   $\barS(E)^{\otimes m}$ in terms of  \lq \lq distinguished" partitions.  One consequence of this description is that this list is also the list of composition factors $L(\lambda)$ of $S(E)^{\otimes m}$ for partitions $\lambda$ with first part at most $m(p-1)$. In particular, for $m=n$, the composition factors of ${\barS}(E)^{\otimes n}$ are the partitions of length at most $n$ with first part at most $n(p-1)$.

\q As an immediate application of our  approach  we recover the  tensor product theorem of Krop, \cite{K} and Sullivan, \cite{Sull}.     This describes the composition factors of the symmetric powers  of the natural module.   Further, in our companion paper, \cite{DG3},  we  obtain a direct analogue for the composition factors of a tensor product of two symmetric powers,  \cite{DG3},  Theorem 4.6. 

\q Moreover, with the  methods used here and in \cite{DG3},  we obtain an application to the representation theory of the symmetric groups:    specifically we 
 determine which irreducible modules occur as composition factors of  Specht modules labelled by   partitions   with   third row length at most one,  \cite{DG3},  Corollary 2.11.

 \q Apart from its relevance to the modular character  problem  we have some other motivation for the  consideration of  tensor products of symmetric powers coming  from our earlier work.   In \cite{DG2} we studied the problem of which polynomial injective modules are injective on restriction to the first infinitesimal subgroup $G_1$    and we gave a solution to this problem  in terms of the \lq\lq index of divisibility" of a polynomially injective module,  \cite{DG2},  Theorem 4.1.    The divisibility index,  in turn,  is determined by the set of  composition factors of $S(E)^{\otimes (n-1)}$, \cite{DG3}, Lemma 3.9.  
 
 \q An explicit solution to the problem of finding all polynomially and infinitesimally injective modules would also resolve the  sticking point of  the paper by De Visscher and the first author, \cite{DDV}, Conjecture 5.2.

 \q The results of this paper are also used in our recent work, \cite{DG4}.   There we study the invariants of  Specht modules for a symmetric group under the action of a smaller symmetric group.    At a certain  point (in the proof of \cite{DG4},  Lemma 2.1) we use some of the theory developed here to analyse these invariants and  give a counterexample to a Conjecture of D. Hemmer,  from  \cite{H}, in each characteristic.

\q The layout of the paper is the following. Section one is preliminary and we use it to establish notation for the standard combinatorics and polynomial representation theory and connections with Hecke algebras  that we shall need. In Section 2 we describe our approach to composition factors of tensor products of symmetric powers of $E$, via the truncated symmetric powers. In Section 3 we  deal with  a reciprocity principal for decomposition numbers.  This section also contains some technical results on removal of a row or a node from   a partition  such that the corresponding simple modules occurs as a composition factor of $\barS(E)^{\otimes m}$. These principles are used repeatedly in our determination of the composition factors.

\q In Section 4 we determine for which restricted partitions the corresponding irreducible module occurs as a  composition factor of $\barS(E)^{\otimes m}$,  via the Mullineux involution on  regular partitions.  In Section 5 we introduce  distinguished partitions and describe some of their properties. In  Section 6 we complete the determination of the composition factors of $\barS(E)^{\otimes m}$.

\q Our main interest is in the classical case, but since the quantised version is essentially no more difficult  we express our results in the general context throughout.

\bigskip\bigskip
\bigskip\bigskip



\section{Preliminaries}

\subsection{Combinatorics}

\q The  standard  reference for  the polynomial representation theory of \\
$\GL_n(K)$ is the monograph \cite{EGS}.   Though we work in the quantised context this reference is appropriate  as the combinatorics is  essentially the same and we adopt the notation of \cite{EGS} wherever  convenient.  Further details may also be found in the monograph, \cite{D3}, which treats the quantised case.

\quad We begin by introducing some of the associated combinatorics.  By a partition we mean an infinite  sequence $\lambda=(\lambda_1,\lambda_2,\ldots)$ of nonnegative integers with $\lambda_1\geq\lambda_2\geq \ldots$ and $\lambda_j=0$ for $j$ sufficiently large.   If $m$ is a positive integer such that $\lambda_j=0$ for $j>m$ we identify $\lambda$ with the finite sequence $(\lambda_1,\ldots,\lambda_m)$.  The length $\l(\lambda)$ of  a partition $\lambda=(\lambda_1,\lambda_2,\ldots)$ is $0$ if $\lambda=0$ and  is the positive integer $m$ such that  $\lambda_m\neq 0$, $\lambda_{m+1}=0$, if $\lambda\neq 0$. For a partition $\lambda$, we denote  by $\lambda'$ the transpose partition of $\lambda$. We write $\P$ for the set of partitions. Let $\lambda\in \P$. We define the   degree of $\lambda=(\lambda_1,\lambda_2,\ldots)$ by $\deg(\lambda)=\lambda_1+\lambda_2+\cdots$.

\q We fix a positive integer $n$. We set $X(n)=\zed^n$. There is a natural partial order on $X(n)$. For $\lambda=(\lambda_1,\ldots,\lambda_n), \mu=(\mu_1,\ldots,\mu_n)\in X(n)$,  we write $\lambda\leq \mu$ if $\lambda_1+\cdots+\lambda_i\leq \mu_1+\cdots+\mu_i$ for $i=1,2,\ldots,n-1$ and $\lambda_1+\cdots+\lambda_n=\mu_1+\cdots+\mu_n$. We shall use the standard $\zed$-basis   $\ep_1,\ldots,\ep_n$ of   $X(n)$,  where $\ep_i=(0,\ldots,1,\ldots,0)$ (with $1$ in the $i$th position), for $1\leq i\leq n$. We write 
$\omega_i$ for the element $\ep_1+\cdots+\ep_i$ of $X(n)$, for $1\leq i\leq n$. We  denote  the element $\omega_n=(1,\dots,1)$ simply by $\omega$. We write $\Lambda(n)$ for the set of $n$-tuples of nonnegative integers. 

\q We write $X^+(n)$ for the set of dominant $n$-tuples of integers, i.e., the set of elements $\lambda=(\lambda_1,\ldots,\lambda_n)\in X(n)$ such that $\lambda_1\geq \cdots\geq  \lambda_n$. 
 We write  $\Lambda^+(n)$ for the set of partitions into at most $n$-parts, i.e.,  $\Lambda^+(n)=X^+(n)\bigcap \Lambda(n)$. We shall sometimes refer to elements of $\Lambda(n)$ as polynomial weights and to elements of $\Lambda^+(n)$ as polynomial dominant weights. For a nonnegative integer $r$ we write $\Lambda^+(n,r)$ for the set of partitions of $r$ into at most $n$ parts, i.e., the set of elements of $\Lambda^+(n)$ of degree $r$.

\q We write ${\rm Sym}(r)$ for the symmetric group on $\{1,2,\ldots,r\}$.  The symmetric group $W={\rm Sym}(n)$ acts naturally on $X(n)$.  We write $w_0$ for the longest element of  $W$,  i.e., the element such that $w_0\lambda=(\lambda_n,\ldots,\lambda_1)$, for $\lambda=(\lambda_1,\ldots,\lambda_n)\in X(n)$. 
 

\q We fix a positive integer $l$.  A partition $\lambda=(\lambda_1,\lambda_2,\ldots)$ is $l$-regular if there is no positive integer $i$ such that $\lambda_i=\lambda_{i+1}=\cdots=\lambda_{i+l-1}>0$. We write $\P_\reg$ for the set of $l$-regular partitions  and $\P_\reg(r)$ for the set of $l$-regular partitions  of degree $r$.

 \q   We write $X_1(n)$ for the set of $l$-restricted partitions into at most $n$ parts, i.e., the set of elements $\lambda=(\lambda_1,\ldots,\lambda_n)\in \Lambda^+(n)$ such that $0\leq \lambda_1-\lambda_2,\ldots,\lambda_{n-1}-\lambda_n, \lambda_n<l$.  Note that an element $\lambda\in \Lambda^+(n)$ belongs to $X_1(n)$ if and only if $\lambda'$ is an  $l$-regular partition.
 
 \q A dominant  weight $\lambda\in X^+(n)$ has a unique expression $\lambda=\lambda^0+l\barlambda$ with $\lambda^0\in X_1(n)$, $\barlambda\in X^+(n)$, moreover if $\lambda\in\Lambda^+(n)$ then $\barlambda\in \Lambda^+(n)$. We shall use this notation a great deal in what follows.

\subsection{Rational Modules and Polynomial Modules}

\q Let $K$ be a field. If $V,W$ are vector spaces over $K$, we write $V\otimes W$ for the tensor product $V\otimes_K W$.   We shall be working with the representation theory of quantum groups over $K$. By the category of quantum groups over $K$ we understand the opposite category of the category of Hopf algebras over $K$. Less formally we shall use the expression \lq\lq $G$ is a quantum group" to indicate that we have in mind a Hopf algebra over $K$ which we denote $K[G]$ and call the coordinate algebra of $G$.  We say that $\phi:G\to H$ is a morphism of quantum groups over $K$ to indicate that we have in mind a morphism of Hopf algebras over $K$, from $K[H]$ to $K[G]$, denoted $\phi^\sharp$ and called the co-morphism of $\phi$.   We will say $H$ is a quantum subgroup of the quantum group $G$, over $K$, to indicate that $H$ is a quantum group with coordinate algebra $K[H]=K[G]/I$, for some Hopf ideal $I$ of $K[G]$, which we call the defining ideal of $H$.  The inclusion morphism $i:H\to G$ is the morphism of quantum groups whose co-morphism $i^\sharp:K[G]\to K[H]=K[G]/I$ is the natural map. 

\q Let $G$ be a quantum group over $K$. The category of  left (\resp. right) $G$-modules is the the category of right (\resp. left) $K[G]$-comodules.  We write $\Mod(G)$ for the category of left $G$-modules and $\mod(G)$ for the category of finite dimensional left $G$-modules.  We shall also call a $G$-module a rational $G$-module (by analogy with the representation theory of algebraic groups).  A $G$-module will mean a left $G$-module unless  indicated otherwise. For a finite dimensional $G$-module $V$ the dual space $V^*=\Hom_K(V,K)$ has a natural $G$-module structure.
For a finite dimensional $G$-module $V$ and a non-negative integer $r$ we write $V^{\otimes r}$ for the $r$-fold tensor product $V\otimes V\otimes \cdots \otimes V$ and write $V^{\otimes -r}$ for the dual of $V^{\otimes r}$.

 \q Let $V$ be a finite dimensional $G$-module with structure map $\tau: V\to V\otimes K[G]$.  The coefficient space $\cf(V)$ of $V$ is the subspace of $K[G]$ spanned by the \lq\lq coefficient elements" $f_{ij}$, $1\leq i,j\leq m$, defined with respect to a basis $v_1,\ldots, v_m$ of $V$, by the equations 
 $$\tau(v_i)=\sum_{j=1}^m v_j\otimes f_{ji}$$
 for $1\leq i\leq m$.  The coefficient space $\cf(V)$ is independent of the choice of basis and is a subcoalgebra of $K[G]$.

\q We fix a positive integer $n$. We shall be working with $G(n)$, the quantum general linear group of degree $n$, as in \cite{D3}.   We fix a non-zero element $q$ of $K$. 
We have a $K$-bialgebra $A(n)$ given by generators $c_{ij}$, $1\leq i,j\leq n$, subject to certain relations (depending on $q$) , as in \cite{D3}, 0.20.  The comultiplication map  $\delta:A(n)\to A(n)\otimes A(n)$ satisfies $\de(c_{ij})=\sum_{r=1}^n c_{ir}\otimes c_{rj}$ and the augmentation map $\ep:A(n)\to K$ satisfies $\ep(c_{ij})=\de_{ij}$ (the Kronecker delta), for $1\leq i,j\leq n$.  The elements $c_{ij}$ will be called the coordinate elements and we define the determinant element
$$d_n=\sum_{\pi\in \Symm(n)} \sgn(\pi) c_{1,\pi(1)}\ldots c_{n,\pi(n)}.$$
Here  $\sgn(\pi)$ denotes the sign of the permutation $\pi$. We form the Ore localisation $A(n)_{d_n}$. The comultiplication map $A(n)\to A(n)\otimes A(n)$ and augmentation map $A(n)\to K$ extend uniquely to $K$-algebraic maps $A(n)_{d_n}\to A(n)_{d_n}\otimes A(n)_{d_n}$ and $A(n)_{d_n}\to K$, giving $A(n)_{d_n}$ the structure of a Hopf algebra. By the quantum general linear group $G(n)$ we mean the quantum group over $K$ with coordinate algebra $K[G(n)]=A(n)_{d_n}$.

\q We write $T(n)$ for the quantum subgroup of $G(n)$ with defining ideal generated by all $c_{ij}$ with $1\leq i,j\leq n$, $i\neq j$.  We write $B(n)$ for quantum subgroup of $G(n)$ with defining ideal generated by all $c_{ij}$ with $1\leq i <j\leq n$. We call $T(n)$ a maximal torus and $B(n)$ a Borel subgroup of $G(n)$ (by analogy with the classical case).

\q We now assign to a finite dimension rational $T(n)$-module its formal character. We form the integral group ring $\zed X(n)$. This has $\zed$-basis of formal exponentials $e^\lambda$, which multiply according to the rule $e^\lambda e^\mu=e^{\lambda+\mu}$, $\lambda,\mu\in X(n)$. For $1\leq i\leq n$ we define ${\bar c}_{ii}=c_{ii}+I_{T(n)}\in K[T(n)]$, where $I_{T(n)}$ is the defining ideal of the quantum subgroup $T(n)$ of $G(n)$.  Note that ${\bar c}_{11}\ldots {\bar c}_{nn}=d_n+I_{T(n)}$, in particular each ${\bar c}_{ii}$ is invertible in $K[T(n)]$. For $\lambda=(\lambda_1,\ldots,\lambda_n)\in X(n)$ we define ${\bar c}^\lambda={\bar c}_{11}^{\lambda_1}\ldots {\bar c}_{nn}^{\lambda_n}$. The elements ${\bar c}^\lambda$, $\lambda\in X(n)$, are group-like and form a $K$-basis of $K[T(n)]$. 
For $\lambda=(\lambda_1,\ldots,\lambda_n)\in X(n)$, we write $K_\lambda$ for $K$ regarded as a (one dimensional) $T(n)$-module with structure map $\tau:K_\lambda\to K_\lambda\otimes K[T(n)]$ given by $\tau(v)=v\otimes {\bar c}^\lambda$, $v\in K_\lambda$.  For a finite dimensional  rational $T(n)$-module $V$ with structure map $\tau:V\to V\otimes K[T(n)]$  and $\lambda\in X(n)$ we have the weight space 
$$V^\lambda=\{v\in V\vert \tau(v) =v\otimes {\bar c}^\lambda\}.$$  
Moreover, we have the weight space decomposition $V=\bigoplus_{\lambda\in X(n)} V^\lambda$. 
We say that $\lambda\in X(n)$ is a weight of $V$ if $V^\lambda\neq 0$. 
The dimension of a finite dimensional vector space $V$ over $K$ will be denoted by $\dim V$. 
The character $\ch V$ of a finite dimensional rational $T(n)$-module $V$ is the element of $\zed X(n)$ defined by
$\ch V=\sum_{\lambda\in X(n)} \dim V^\lambda e^\lambda$.   

\q For each $\lambda\in X^+(n)$ there is an irreducible rational $G(n)$-module $L_n(\lambda)$ which has unique highest weight $\lambda$ and such $\lambda$ occurs as a weight with multiplicity one. The modules $L_n(\lambda)$, $\lambda\in X^+(n)$, form a complete set of pairwise non-isomorphic irreducible rational  $G$-modules. 
 Note that for $\lambda=(\lambda_1,\ldots,\lambda_n)\in X^+(n)$ the dual module $L_n(\lambda)^*$ is isomorphic to $L_n(\lambda^*)$, where $\lambda^*=(-\lambda_n,\ldots,-\lambda_1)$. For a finite dimensional rational $G(n)$-module $V$ and $\lambda\in X^+(n)$ we write $[V:L_n(\lambda)]$ for the multiplicity of $L_n(\lambda)$ as a composition factor of $V$. 

\q We write $D_n$ for the one dimensional $G(n)$-module corresponding to the determinant. Thus $D_n$ has structure map $\tau:D_n\to D_n\otimes K[G]$, given by $\tau(v)=v\otimes d_n$, for $v\in D_n$. 
Thus we have  $D_n=L_n(\omega)=L_n(1,1,\ldots,1)$. We write $E_n$ for the natural $G(n)$-module. Thus  $E_n$ has basis $e_1,\ldots,e_n$,  and the structure map $\tau:E_n\to   E_n\otimes K[G(n)]$ is given by $\tau(e_i)=\sum_{j=1}^n e_j\otimes c_{ji}$. We also have that $E_n=L_n(1,0,\dots,0)$.

\q A finite dimensional $G(n)$-module $V$ is called polynomial if $\cf(V) \leq A(n)$. The modules $L_n(\lambda)$, $\lambda\in \Lambda^+(n)$, form a complete set of pairwise non-isomorphic irreducible polynomial $G(n)$-modules. We write $I_n(\lambda)$ for the injective envelope of $L_n(\lambda)$ in the category of polynomial modules.  We have a grading   $A(n)=\bigoplus_{r=0}^\infty  A(n,r)$ in  such a way that each $c_{ij}$ has degree $1$. Moreover each $A(n,r)$ is a finite dimensional subcoalgebra of $A(n)$. The dual algebra $S(n,r)$ is known as the Schur algebra.  A finite dimensional $G(n)$-module $V$ is polynomial of degree $r$ if $\cf(V)\leq A(n,r)$.  We write $\pol(n)$ (\resp. $\pol(n,r)$)  for the full subcategory of $\mod(G(n))$ whose objects are the polynomial modules (\resp. the modules which are polynomial of degree $r$).

\q For an arbitrary finite dimensional polynomial $G(n)$-module we may write $V$ uniquely as a direct sum $V=\bigoplus_{r=0}^\infty V(r)$ in such a way that $V(r)$ is polynomial of degree $r$, for $r\geq 0$.  Let $r\geq 0$. The modules $L_n(\lambda)$, $\lambda\in\Lambda^+(n,r)$, form a complete set of pairwise non-isomorphic irreducible polynomial $G(n)$-modules which are polynomial of degree $r$.  We write $\mod(S)$ for the category of left modules for a finite dimensional $K$-algebra $S$.  The category 
 $\pol(n,r)$ is naturally equivalent to the category $\mod(S(n,r))$.    It follows in particular  that, for $\lambda\in \Lambda^+(n,r)$, the module $I_n(\lambda)$ is a finite dimensional module which is polynomial of degree $r$.

\q We shall also need modules induced from $B(n)$ to $G(n)$.  (For details of the induction functor $\Mod(B(n))\to \Mod(G(n))$ see, for example, \cite{DStd}.)  For $\lambda\in X(n)$ there is a unique (up to isomorphism) one dimensional $B(n)$-module whose restriction to $T(n)$ is  $K_\lambda$. We also denote this module by $K_\lambda$. The induced module $\ind_{B(n)}^{G(n)} K_\lambda$ is non-zero if and only if $\lambda\in X^+(n)$. For $\lambda\in X^+(n)$ we set $\nabla_n(\lambda)=\ind_{B(n)}^{G(n)}  K_\lambda$. Then $\nabla_n(\lambda)$ is finite dimensional  and its character is the Schur symmetric function corresponding to $\lambda$. The $G(n)$-module socle of $\nabla_n(\lambda)$ is $L_n(\lambda)$. The module $\nabla_n(\lambda)$ has unique highest weight $\lambda$ and this weight occurs with multiplicity one.    For $\lambda\in X^+(n)$ we take as a definition of the Weyl module $\Delta_n(\lambda)$ the dual module $\nabla_n(-w_0\lambda)^*$. Thus $\nabla_n(\lambda)$ and $\Delta_n(\lambda)$ have the same character.

\q A filtration $0=V_0\leq V_1\leq \cdots\leq V_r=V$ of  a finite dimensional rational $G(n)$-module $V$ is said to be {\it good} if for each $1\leq i\leq r$ the quotient $V_i/V_{i-1}$ is either zero or isomorphic to $\nabla_n(\lambda^i)$ for some $\lambda^i\in X^+(n)$.  For a rational $G(n)$-module $V$ admitting a good filtration for each $\lambda\in X^+(n)$, the multiplicity 
$|\{1\leq i\leq r\vert V_i/V_{i-1}\cong \nabla_n(\lambda)\}|$ is independent of the choice of the good filtration, and will be denoted $(V:\nabla_n(\lambda))$. 

\q For $\lambda,\mu\in X^+(n)$ we have $\Ext^1_{G(n)}(\nabla_n(\lambda),\nabla_n(\mu))=0$ unless $\lambda>\mu$. Given Kempf's Vanishing Theorem, \cite{D3},  Theorem 3.4,  this follows exactly as in the classical case, e.g., \cite{D4},  Lemma 3.2.1 (or the original source \cite{CPS}, Corollary (3.2)). It  follows that if $V$ has a good  filtration 
$0=V_0\leq V_1\leq \cdots\leq V_t =V$ with sections $V_i/V_{i-1}\cong \nabla_n(\lambda_i)$, $1\leq i\leq t$, and $\mu_1,\ldots,\mu_t$ is a reordering of the $\lambda_1,\ldots,\lambda_t$ such that $\mu_i<\mu_j$ implies that $i<j$ then there is a good filtration $0=V_0'<V_1'<\cdots <V_t'=V$ with $V_i'/V_{i-1}'\cong \nabla_n(\mu_i)$, for $1\leq i\leq t$. 

\q Similarly it will be of great practical use  to know that\\
 $\Ext^1_{G(n)}(\nabla_n(\lambda),\nabla_n(\mu))=0$ when $\lambda$ and $\mu$ belong to different blocks. Here  the relationship with cores of partitions diagrams (discussed later) will be crucial for us.    For a partition $\lambda$ we denote by $[\lambda]$ the corresponding partition diagram (as in \cite{EGS}). The $l$-core of $[\lambda]$ is the diagram obtained by removing skew $l$-hooks,  as in \cite{James}. If $\lambda,\mu\in \Lambda^+(n,r)$ and $[\lambda]$ and $[\mu]$ have different $l$-cores then the simple modules $L_n(\lambda)$ and $L_n(\mu)$ belong to different blocks and it follows in particular that $\Ext^i_{S(n,r)}(\nabla(\lambda),\nabla(\mu))=0$, for all $i\geq 0$.  A precise  description of the blocks of the $q$-Schur algebras was found by Cox, see \cite{Cox},  Theorem 5.3.

\q  For $\lambda\in \Lambda^+(n)$  the module $I_n(\lambda)$ has a good filtration and we have the reciprocity formula $(I_n(\lambda):\nabla_n(\mu))=[\nabla_n(\mu):L_n(\lambda)]$ see e.g., \cite{DStd}, Section 4, (6).

\subsection{The Frobenius Morphism}

\q It will be important for us to make a comparison with the classical case $q=1$.   In this case we will write $\dotG(n)$ for  $G(n)$ and write $x_{ij}$ for the coordinate element $c_{ij}$, $1\leq i,j\leq n$.   In this case we write $\dotL_n(\lambda)$ for the $\dotG$-module $L_n(\lambda)$, $\lambda\in X^+(n)$, and write $\dotE_n$ for $E_n$.

\q  We return to the general situation.  If $q$ is not a root or unity, or if $K$ has characteristic $0$ and $q=1$ then all $G(n)$-modules are completely reducible, see e.g.,  \cite{DStd}, Section 4, (8).  We therefore assume from now on that $q$ is a root of unity and that if $K$ has characteristic $0$ then $q\neq 1$.  Also, from now on, $l$ is the smallest positive integer such that $1+q+\cdots+q^{l-1}=0$.

\q Now we have a morphism of Hopf algebras $\theta: K[\dotG(n)]\to K[G(n)]$ given by $\theta(x_{ij})=c_{ij}^l$, for $1\leq i,j\leq n$.  We write $F:G(n)\to \dotG(n)$ for the morphism of quantum groups such that $F^\sharp=\theta$.  Given a $\dotG(n)$-module $V$ we write $V^F$ for the corresponding $G(n)$-module. Thus, $V^F$ as a vector space is $V$ and if the $\dotG(n)$-module $V$ has structure map $\tau:V\to V\otimes K[\dotG(n)]$ then $V^F$ has structure map $(\id_V \otimes F)\circ \tau: V^F \to  V^F\otimes K[G(n)]$, where $\id_V:V\to V$ is the identity map on the vector space  $V$.  

\q  For an element $\phi=\sum_{\xi\in X(n)} a_\xi e^\xi$ of $\zed X(n)$ we write $\phi^F$ for the element $\sum_{\xi\in X(n)}a_\xi e^{l\xi}$.  Then, for a finite dimensional $\dotG(n)$-module $V$ we have $\ch V^F= (\ch V)^F$.   Moreover, we have the following relationship between the irreducible modules for $G(n)$ and ${\dotG(n)}$, see \cite{D3}, Section 3.2, (5).

\bs

{\bf 1.3.1  Steinberg's Tensor Product Theorem}  \sl For  $\lambda^0\in X_1(n)$ and $\barlambda\in X^+(n)$ we have 
$$L_n(\lambda^0+l\barlambda)\cong L_n(\lambda^0)\otimes \dotL_n(\barlambda)^F.$$

\rm

\bs

\q  Usually we shall abbreviate the quantum groups  $G(n)$, $B(n)$, $T(n)$ to  $G$, $B$, $T$ and $\dotG(n)$ to $\dotG$.   Likewise,  we usually abbreviate the modules 
$L_n(\lambda)$, $\nabla_n(\lambda)$, $\Delta_n(\lambda)$,  $I_n(\lambda)$ and $\dotL_n(\lambda)$ to $L(\lambda)$, $\nabla(\lambda)$, $\Delta(\lambda)$,  $I(\lambda)$ and $\dotL(\lambda)$, for $\lambda\in \Lambda^+(n)$, and abbreviate the modules $E_n$ and $D_n$ to $E$ and $D$.

\subsection{A truncation functor}

\q Let $N,n$ be positive integers with $N\geq n$.   We identify $G(n)$ with the quantum subgroup of $G(N)$ whose defining ideal is generated by all $c_{ii}-1$, $n<i\leq N$, and all $c_{ij}$ with $1\leq i\neq j\leq N$ and $i>n$ or $j>n$.  We have an exact functor (the truncation functor) $d_{N,n}:\pol(N)\to \pol(n)$ taking $V\in \pol(N)$ to the $G(n)$ submodule  $\bigoplus_{\alpha\in \Lambda(n)}  V^\alpha$  of $V$ and taking a morphism of polynomial modules $V\to V'$ to its restriction $d_{N,n}(V)\to d_{N,n}(V')$. For a discussion of this functor at the level of modules for Schur algebras in the classical case see \cite{EGS}, Section 6.5.

\q   For a finite sequence of nonnegative integers $\alpha=(\alpha_1,\ldots,\alpha_m)$ we write $S^\alpha (E_n)$ for the tensor product of symmetric powers $S^{\alpha_1}(E_n)\otimes \cdots\otimes  S^{\alpha_m}(E_n)$.

\bs
 
{\bf Proposition 1.4.1 } \sl The functor $d_{N,n}$ has the following properties:

(i) for polynomial $G(N)$-modules $X,Y$ we have $d_{N,n}(X\otimes Y)=d_{N,n}(X)\otimes d_{N,n}(Y)$;\\
 (ii) for $\alpha$ a finite sequence of nonnegative integers we have $d_{N,n} S^\alpha(E_N)=S^\alpha(E_n)$;\\
(iii)  for $\lambda\in \Lambda^+(N,r)$ and $X_\lambda=L_N(\lambda), \nabla_N(\lambda)$ or $\Delta_N(\lambda)$ then $d_{N,n}(X_\lambda)\neq0$ if and only if $\lambda\in \Lambda^+(n,r)$;\\
(iv)  for  $\lambda\in \Lambda^+(n,r)$, $d_{N,n}(L_N(\lambda))=L_n(\lambda)$, $d_{N,n}(\nabla_N(\lambda))=\nabla_n(\lambda)$ and $d_{N,n}(\Delta_N(\lambda))=\Delta_n(\lambda)$.

\rm

\begin{proof}  Part (i) is immediate. Part (ii) is an easy check as is part (iii). Part (iv) follows from \cite{D3}, 4.2, (4). 

\end{proof}

\subsection{Connections with the Hecke algebras}

\q We now record some connections with representations of  Hecke algebra of type $A$. We fix a positive integer $r$.  We write $\l(\pi)$ for the length of a permutation $\pi$.   The Hecke algebra $\Hec(r)$ is the $K$-algebra with basis $T_w$, $w\in \Symm(r)$, and multiplication satisfying

\begin{align*}&T_wT_{w'}=T_{ww'}, \quad \hbox{ if } \l(ww')=\l(w)+\l(w'), \hbox{and}\cr
&(T_s+1)(T_s-q)=0
\end{align*}
for $w,w'\in \Symm(r)$ and a basic transposition $s\in \Symm(r)$.

\q Assume now $n\geq r$.  We have the Schur functor $f:\mod(S(n,r))\to \mod(\Hec(r))$, see \cite{D3}, 2.1.  For $\lambda$ a partition of degree $r$ we denote  by $\Sp(\lambda)$ the corresponding (Dipper-James) Specht module.

\bs

{\bf Proposition 1.5.1}  \sl The functor $f$ has the following properties :\\
(i) $f$ is exact;\\
(ii) for $\lambda\in \Lambda^+(n,r)$ we have $f\nabla_n(\lambda)=\Sp(\lambda)$;\\
(iii)  for  $\lambda\in \Lambda^+(n,r)$ we have  $f(L_n(\lambda))\neq 0$ if and only if $\lambda\in X_1(n)$ and the set $\{f(L_n(\lambda))| \lambda\in X_1(n)\}$ is a full set of  pairwise non-isomorphic simple $\Hec(r)$-modules.

\rm
\bs

\begin{proof} (i) is clear from the definition. For (ii) see \cite{D3} Proposition 4.5.8.  and for (iii) see \cite{D3}, 4.3, (9) and 4.4,(2).

\end{proof}

\q There is an alternative description of the irreducible $\Hec(r)$-modules. For $\lambda\in \P_\reg(r)$,  we define $D^\lambda$ (denoted $D(\lambda)$ in \cite{D3}) to be the head of the Specht module $\Sp(\lambda)$.  Then $D^\lambda$, $\lambda\in \P_\reg(r)$, is a complete set of pairwise non-isomorphic simple $\Hec(r)$-modules.  The relationship between these two labelings of the  irreducible modules will be crucial for us in what follows.

\q We use the notation of \cite{D3}, Section 4.4.  There is an involutory algebra automorphism $\sharp: \Hec(r)\to \Hec(r)$ given by $\sharp(T_s)=-T_s+(q-1)1$, for a basic  transposition $s\in \Symm(r)$.  For a $\Hec(r)$-module $V$ affording the representation $\pi:\Hec(r)\to \End_K(V)$ we write $V^\sharp$ for the $K$-space $V$ regarded as a module via the representation $\pi\circ\sharp$.

\q   The relationship between the labellings is:

$$fL(\lambda)\cong (D^{\lambda'})^\sharp$$

for $\lambda\in X_1(n)$.

\q Therefore a direct relation between  the two descriptions of the irreducible modules for the symmetric group is  described in terms of the involution   $\P_\reg(r)\to \P_\reg(r)$, $\lambda\mapsto \tilde\lambda$  defined by 
$(D^\lambda)^\sharp \cong D^{\tilde\lambda}$. This bijection is named after G. Mullineux,  who proposed, in \cite{Mull},  an algorithm to describe it explicitly in the classical case $q=1$ and $K$ a field of  characteristic $p$.  The algorithm proposed  by Mullineux makes perfect sense also in the quantised case.     We write $\Mull:\P_\reg(r)\to \P_\reg(r)$ for this bijection and call it the Mullineux involution. Thus we have
$$f (L_n(\lambda))\iso D^{\Mull(\lambda')}$$
for $\lambda$ an  $l$-restricted partition of degree $r$.

\q Mullineux's original conjecture was proved by Ford  and Kleshchev in \cite{FK}.  The quantised version was proved by Brundan, \cite{JB}. 
This bijection is very important to us and we shall assume some familiarity with the Mullineux algorithm in later sections.

\q We state explicitly some of the most important properties of this map for us. We indicate an argument here since it will be important for us.  The argument is essentially in \cite{D0} (in the classical case)  but it is perhaps more convenient to use the language of tilting modules, as in \cite{D3}.   For $\lambda\in \Lambda^+(n,r)$ we write $T_n(\lambda)$ for the corresponding tilting module, as in \cite{D3}.

\bs 

{\bf Proposition 1.5.2}  \sl Let $\lambda$ be a restricted partition of $r$ and let $\mu=\Mull(\lambda')$.  Then   $\mu$ is the unique maximal element of the set,\\
  $S=\{\tau\in \Lambda^+(n,r) \vert [\nabla_n(\tau):L_n(\lambda)]\neq 0\}$.
\bs
\rm

\begin{proof} Let  $\tau\in \Lambda^+(n,r)$. We have $[\nabla_n(\tau):L_n(\lambda)]= (I_n(\lambda):\nabla_n(\tau))$. Moreover, we have $I_n(\lambda)=T_n(\Mull(\lambda'))$, \cite{D3}, 4.3, (10),  so that $\tau\in S$ if and only if $(T_n(\Mull(\lambda')):\nabla_n(\mu)) \neq 0$. But $T_n(\Mull(\lambda'))$ has unique highest weight $\Mull(\lambda')$ so the result follows.

\end{proof}


\bigskip\bigskip
\bigskip\bigskip


\section{Special Partitions and Good Partitions}

\q The symmetric algebra $S(E_n)$ has  the homogeneous  ideal and $G(n)$-submodule $I$ generated by $e_1^l,\ldots,e_n^l$. We write $\barS(E_n)$ for the quotient $S(E_n)/I$.  Then $\barS(E_n)$ inherits a grading and $G(n)$-module decomposition $\barS(E_n)=\bigoplus_{r=0}^\infty \barS^r(E_n)$.   The images of the elements $e_1^{r_1}\ldots e_n^{r_n}$, with $0\leq r_1,\ldots,r_n\leq l-1$, $r_1+\cdots+r_n=r$  form a basis of $\barS^r(E_n)$, for $r\geq 0$.

\q Let $m\leq n$. For $\alpha=(\alpha_1,\ldots,\alpha_m)\in \Lambda(m)$ we define $\barS^\alpha(E_n)=\barS^{\alpha_1}(E_n)\otimes \cdots \otimes S^{\alpha_m}(E_n)$. Thus we have $\barS(E_n)^{\otimes m}=\bigoplus_{\alpha\in \Lambda(m)} \barS^\alpha(E_n)$. 

\q We are now ready to make two key definitions.

\begin{definition} Let $m\geq 1$. 

(i) We will say that  $\lambda\in \Lambda^+(n)$ is  $m$-good (with respect to $n$)   if   $L_n(\lambda)$ is a composition factor of the $m$-fold tensor product  $S(E_n)^{\otimes m}$.

(ii) We will say that  $\lambda\in \Lambda^+(n)$ is  $m$-special (with respect to $n$)  if   $L_n(\lambda)$ is a composition factor of $\barS(E_n)^{\otimes m}$. 
\end{definition}

From  \cite{DG2}, Lemma 3.8  we get:

\begin{lemma} An element $\lambda\in \Lambda^+(n)$ is $m$-good if and only if there exists $\mu\in \Lambda^+(n)$ of  length at most $m$ such that $[\nabla_n(\mu):L_n(\lambda)]\neq 0$.
\end{lemma}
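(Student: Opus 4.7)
The plan is to use the grading
$$S(E_n)^{\otimes m} = \bigoplus_{\alpha \in \Lambda(m)} S^\alpha(E_n)$$
(the obvious analogue for $S(E_n)$ of the decomposition recorded in Section 2 for $\barS(E_n)$), together with a good filtration of each summand $S^\alpha(E_n)$ whose sections $\nabla_n(\mu)$ have $\l(\mu) \leq m$.

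For the forward direction, suppose $\lambda$ is $m$-good, so $L_n(\lambda)$ is a composition factor of some summand $S^\alpha(E_n)$ with $\alpha \in \Lambda(m)$. Since $S^{\alpha_i}(E_n) \iso \nabla_n(\alpha_i \ep_1)$, the tensor product $S^\alpha(E_n)$ admits a good filtration (tensor products of modules with good filtrations inherit the property; the quantised version is covered in \cite{D3}). Comparing formal characters with the classical identity $h_\alpha = \sum_\mu K(\mu,\alpha) s_\mu$ identifies $(S^\alpha(E_n) : \nabla_n(\mu))$ with the Kostka number $K(\mu,\alpha)$. Since any semistandard tableau of shape $\mu$ and content $\alpha$ has strictly increasing columns whose entries lie in $\{1,\ldots,m\}$, the nonvanishing $K(\mu,\alpha) \neq 0$ forces $\l(\mu) \leq m$. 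Consequently $L_n(\lambda)$ appears as a composition factor of some section $\nabla_n(\mu)$ with $\l(\mu) \leq m$, furnishing the required $\mu$.

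For the reverse direction, take $\mu \in \Lambda^+(n)$ with $\l(\mu) \leq m$ and $[\nabla_n(\mu) : L_n(\lambda)] \neq 0$. Filling row $i$ of the Young diagram of $\mu$ with the entry $i$ produces a semistandard tableau of shape $\mu$ and content $(\mu_1, \ldots, \mu_m)$, so $K(\mu,\mu) \geq 1$. Setting $\alpha = (\mu_1, \ldots, \mu_m)$ (padded with zeros to $m$ parts), $\nabla_n(\mu)$ occurs as a section of the good filtration of $S^\alpha(E_n) \subseteq S(E_n)^{\otimes m}$, and $L_n(\lambda)$ is a composition factor of this section by hypothesis. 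Hence $\lambda$ is $m$-good.

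The main obstacle is the good-filtration property of $S^\alpha(E_n)$ together with the identification of multiplicities as Kostka numbers in the quantised setting; once this is imported from \cite{D3}, the remaining argument is a short application of the semistandard tableau interpretation of $K(\mu,\alpha)$.
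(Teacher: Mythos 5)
Your proof is correct. The paper does not prove this lemma inline but cites \cite{DG2}, Lemma 3.8; your argument --- the good filtration of $S^\alpha(E_n)$ with section multiplicities the Kostka numbers $K(\mu,\alpha)$, the column-strictness bound $\l(\mu)\leq m$ whenever $K(\mu,\alpha)\neq 0$ for $\alpha\in\Lambda(m)$, and the converse via $K(\mu,\mu)=1$ with $\alpha$ taken to be $\mu$ padded to $m$ parts --- is the standard one and surely the argument behind the citation, the only nontrivial input being the good-filtration property of the tensor product $S^\alpha(E_n)\cong\nabla_n(\alpha_1\ep_1)\otimes\cdots\otimes\nabla_n(\alpha_m\ep_1)$ in the quantised setting, which you correctly locate in \cite{D3}.
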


\begin{lemma} (The Stability Properties)  Let $m,n,N $  be positive integers with $N\geq n$.  Let $\lambda$ be a partition of length at most $n$. Then $\lambda$ is $m$-good (\resp. $m$-special) with respect to $n$ if and only if $\lambda$ is $m$-good (\resp. $m$-special) with respect to $N$.

\end{lemma}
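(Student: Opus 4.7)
The plan is to exploit the truncation functor $d_{N,n}:\pol(N)\to\pol(n)$ and its properties listed in Proposition 1.4.1.  The proof proceeds in two stages: first one identifies the image of the relevant modules under $d_{N,n}$, and then one uses exactness of $d_{N,n}$ together with parts (iii) and (iv) of Proposition 1.4.1 to transfer composition-factor information between $n$ and $N$.

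First I would verify that $d_{N,n}\barS^r(E_N)=\barS^r(E_n)$ for every $r\geq 0$.  The module $\barS^r(E_N)$ has the monomial basis consisting of the images of $e_1^{r_1}\cdots e_N^{r_N}$ with $0\leq r_i<l$ and $\sum r_i=r$, the weight of such a basis element being $(r_1,\ldots,r_N)\in\Lambda(N)$; the truncation $d_{N,n}$ picks out exactly those monomials whose weight lies in $\Lambda(n)$, i.e.\ those with $r_{n+1}=\cdots=r_N=0$, and these form a basis of $\barS^r(E_n)$.  Combining this with Proposition 1.4.1(i) gives $d_{N,n}\barS^\alpha(E_N)=\barS^\alpha(E_n)$ for every $\alpha\in\Lambda(m)$, and summing over $\alpha$ yields $d_{N,n}(\barS(E_N)^{\otimes m})=\barS(E_n)^{\otimes m}$.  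The analogous identity $d_{N,n}(S^\alpha(E_N))=S^\alpha(E_n)$ that is needed for the $m$-good case follows directly from Proposition 1.4.1(i) and (ii).

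Now fix $\lambda\in\Lambda^+(n)\subseteq\Lambda^+(N)$.  If $L_N(\lambda)$ appears as a composition factor of $\barS(E_N)^{\otimes m}$, then applying the exact functor $d_{N,n}$ to a composition series and using $d_{N,n}L_N(\lambda)=L_n(\lambda)$ from Proposition 1.4.1(iv) immediately gives $L_n(\lambda)$ as a composition factor of $\barS(E_n)^{\otimes m}$.  For the converse, take any composition series $0=V_0<V_1<\cdots<V_k=\barS(E_N)^{\otimes m}$ with factors $L_N(\mu_i)$; applying $d_{N,n}$ and discarding the zero terms (those with $\mu_i\notin\Lambda^+(n)$, by Proposition 1.4.1(iii)) produces a composition series of $\barS(E_n)^{\otimes m}$ whose factors are precisely the $L_n(\mu_i)$ with $\mu_i\in\Lambda^+(n)$, so if $L_n(\lambda)$ is one of them then $\mu_i=\lambda$ for some $i$, yielding $L_N(\lambda)$ upstairs.

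The $m$-good assertion is formally the same, applied summand-by-summand to the decomposition $S(E_n)^{\otimes m}=\bigoplus_{\alpha\in\Lambda(m)} S^\alpha(E_n)$ into finite-dimensional polynomial submodules (the analogous decomposition holding for $N$).  The only slightly non-routine input is the identification $d_{N,n}\barS^r(E_N)=\barS^r(E_n)$, which is not on the explicit list in Proposition 1.4.1 but which is a straightforward weight-space computation; once this is in place the argument is a general principle about exact functors sending simples either to simples or to zero, and I anticipate no substantive obstacle.
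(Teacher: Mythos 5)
Your proof is correct and follows essentially the same route as the paper: identify $d_{N,n}\barS(E_N)=\barS(E_n)$ by inspecting the monomial basis (what the paper calls ``easy to check''), tensor up via Proposition 1.4.1(i), invoke (ii) for the $S^\alpha$ case, and then transfer composition factors using exactness of $d_{N,n}$ together with (iii) and (iv). You simply spell out the exactness step that the paper compresses into ``Now Proposition 1.4.1 (iv) gives the result,'' and you correctly note the need to work summand-by-summand for the infinite-dimensional $S(E)^{\otimes m}$.
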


\begin{proof}  For  $\alpha\in   \Lambda(m)$ we have   $d_{N,n}(S^\alpha(E_N))=S^\alpha(E_n)$  from  Proposition 1.4.1 (ii) and this, together with Proposition 1.4.1 (iv) 
  gives the result for $m$-good partitions. 

\q  It is easy to check, from the explicit   bases of $\barS(E_N)$  and $\barS(E_n)$, as above,  that $d_{N,n}(\barS(E_N))=\barS(E_n)$ from which we get  that $d_{N,n}(\barS(E_N)^{\otimes m})=\barS(E_n)^{\otimes m}$ by Proposition 1.4.1 (i).  Now Proposition 1.4.1 (iv) gives the result for $m$-special partitions.
\end{proof}

\bf Notation\q\rm In view of the above lemma, for a positive integer $m$,  we shall say that a partition $\lambda$ is $m$-good (\resp.  $m$-special) if it is $m$-good (\resp.  $m$-special) with respect to $n$, for $n\geq \l(\lambda)$.

\q We record an elementary observation.

\begin{lemma} Let $\lambda,\mu\in \Lambda^+(n)$ and let $m_1,m_2\geq 0$.

If  $\lambda$ is $m_1$-good (resp.  $m_1$-special)  and $\mu$ is $m_2$-good (resp. $m_2$-special) then  $\lambda+\mu$ is $(m_1+m_2)$-good (resp. $(m_1+m_2)$-special).

\end{lemma}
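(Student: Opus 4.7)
The plan is to reduce the statement to a single observation: for $\lambda,\mu\in\Lambda^+(n)$ the tensor product $L_n(\lambda)\otimes L_n(\mu)$ has $\lambda+\mu$ as its unique highest weight, and this weight occurs with multiplicity one (the product of a highest weight vector of $L_n(\lambda)$ with a highest weight vector of $L_n(\mu)$ is a highest weight vector of weight $\lambda+\mu$). Consequently $L_n(\lambda+\mu)$ appears as a composition factor of $L_n(\lambda)\otimes L_n(\mu)$.

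Next I would combine this with the usual composition-series manipulation for tensor products: if $V$ has a composition series with a factor $L_n(\lambda)$ and $W$ has a composition series with a factor $L_n(\mu)$, then tensoring the filtration of $V$ with $W$ and then refining by a filtration of $W$ yields a filtration of $V\otimes W$ one of whose sections is $L_n(\lambda)\otimes L_n(\mu)$. Hence every composition factor of $L_n(\lambda)\otimes L_n(\mu)$, and in particular $L_n(\lambda+\mu)$, is a composition factor of $V\otimes W$.

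Now pick $n\geq \l(\lambda)+\l(\mu)$ (allowed by Lemma 2.3) and apply this with $V=S(E_n)^{\otimes m_1}$, $W=S(E_n)^{\otimes m_2}$ in the $m$-good case, respectively with $V=\barS(E_n)^{\otimes m_1}$, $W=\barS(E_n)^{\otimes m_2}$ in the $m$-special case. Since $V\otimes W=S(E_n)^{\otimes(m_1+m_2)}$ in the first case and $V\otimes W=\barS(E_n)^{\otimes(m_1+m_2)}$ in the second, the conclusion follows.

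There is no real obstacle: the only substantive point is the highest weight calculation for $L_n(\lambda)\otimes L_n(\mu)$, which is standard from the weight-space decomposition and the fact that all weights of $L_n(\lambda)$ (resp.\ $L_n(\mu)$) are $\leq\lambda$ (resp.\ $\leq\mu$) in the dominance order. Everything else is bookkeeping with composition series and an appeal to the stability lemma to ensure $n$ is large enough for $\lambda+\mu\in\Lambda^+(n)$.
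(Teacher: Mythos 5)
Your argument is correct and is essentially the paper's own proof: both show that $L_n(\lambda)\otimes L_n(\mu)$ occurs as a section of $S(E_n)^{\otimes m_1}\otimes S(E_n)^{\otimes m_2}=S(E_n)^{\otimes(m_1+m_2)}$ (resp.\ of $\barS(E_n)^{\otimes(m_1+m_2)}$), and then use that $L_n(\lambda+\mu)$ is a composition factor of $L_n(\lambda)\otimes L_n(\mu)$ since $\lambda+\mu$ is its highest weight. You merely spell out the composition-series refinement and the highest-weight multiplicity argument in a little more detail than the paper does.
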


\begin{proof}    We suppose $n$ is sufficiently large. Let $S=S(E_n)$ and suppose that $\lambda\in \Lambda^+(n)$ is $m_1$-good and $\mu\in \Lambda^+(n)$ is $m_2$-good. Then $L_n(\lambda)$ occurs as a section of $S^{\otimes m_1}$ and $L_n(\mu)$ occurs as a section of  $S^{\otimes m_2}$. Hence $L_n(\lambda)\otimes L_n(\mu)$ occurs as a section of $S^{\otimes (m_1+m_2)}=S^{\otimes m_1}\otimes S^{\otimes m_2}$.  Now $L_n(\lambda)\otimes L_n(\mu)$ has highest weight $\lambda+\mu$ so that $L_n(\lambda+\mu)$ occurs as a composition factor of $L_n(\lambda)\otimes L_n(\mu)$, and hence of $S^{\otimes (m_1+m_2)}$, i.e., $\lambda+\mu$ is $(m_1+m_2)$-good.

\q The argument for special partitions is completely analogous.

\end{proof}

\q We elucidate the relationship between $m$-good and $m$-special partitions via some properties   of graded modules that we now recall.
Let $A$ be a $K$-algebra. If $M$ is a left $A$-module, $S$ is a subspace of $A$ and $V$ is a subspace of $M$ then we write $SV$ for the subspace of $M$ spanned by all elements $sv$, with $s\in S$, $v\in V$. Now suppose that $A$ has a $K$-algebra grading 
 $A=\bigoplus_{r=0}^\infty A_r$. We assume further  that $A_0=K$ and that $A_1$ generates $A$ and has finite dimension. We set $A_+=\sum_{r>0}A_r$. 

\q Let $M=\bigoplus_{i\geq0}M_i$ be a finitely generated graded $A$-module and consider the graded vector space $\barM=M/A_+M$

\begin{lemma}
If $V$ is a homogeneous  subspace of $M$ such that 
$$M_r=(A_+M)_r+V_r$$
for all $r$  then the multiplication map  $A\otimes V\rightarrow M$ is  surjective.

\end{lemma}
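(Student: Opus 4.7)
The plan is to prove this by induction on the degree $r$, showing that $M_r \subseteq AV$ for every $r \geq 0$; since $M = \bigoplus_{r \geq 0} M_r$, this will give $AV = M$, i.e., the multiplication map $A \otimes V \to M$ is surjective.

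For the base case $r=0$, observe that $(A_+M)_0 = 0$ because any product $am$ with $a \in A_+$ has degree equal to $\deg(a) + \deg(m) \geq 1 > 0$. Hence the hypothesis $M_0 = (A_+M)_0 + V_0$ reduces to $M_0 = V_0 \subseteq AV$ (using that $1 \in A_0$ acts as the identity on $V_0$).

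For the inductive step, fix $r > 0$ and assume that $M_s \subseteq AV$ for all $s < r$. Then $V_r \subseteq AV$ trivially. It remains to handle $(A_+M)_r$. A typical element of $(A_+M)_r$ is a finite sum of terms $am$ with $a \in A_i$ homogeneous of degree $i > 0$ and $m \in M_{r-i}$. Since $r - i < r$, the inductive hypothesis gives $m \in AV$, say $m = \sum_j a_j v_j$ with $a_j \in A$, $v_j \in V$; then $am = \sum_j (aa_j)v_j \in AV$. Combining with the hypothesis $M_r = (A_+M)_r + V_r$ yields $M_r \subseteq AV$, completing the induction.

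There is no genuine obstacle here: this is essentially the graded analogue of Nakayama's lemma, and the assumptions that $A_0 = K$, that $A$ is non-negatively graded, and that $V$ is homogeneous are exactly what make the induction on degree go through. The hypotheses that $A_1$ generates $A$ and that $M$ is finitely generated are not actually needed for this statement (finite generation will presumably be exploited in the application, to conclude that $M$ is generated by a \emph{finite-dimensional} complement of $A_+M$), but the proof above uses only the graded structure.
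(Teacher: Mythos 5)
Your proof is correct and is essentially the same induction on degree that the paper uses, just spelled out in slightly more detail (you make explicit that $(A_+M)_r \subseteq \sum_{j<r} A M_j$, which the paper states without elaboration). Your side remark that finite generation of $M$ and the generation of $A$ by $A_1$ are not needed for this particular lemma is also accurate.
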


\begin{proof}

We have  $M_0=V_0\leq AV$. Now assume $r>0$ and  $M_j\leq AV$ for $j<r$. Then
$$M_r=(A_+M)_r+V_r\leq\sum_{j<r}AM_j+V\leq AV$$
so it  follows by induction that $M_r\leq AV$ for all $r$. Hence $AV=M$, i.e., the map $A\otimes V\to M$ is surjective.

\end{proof}

\begin{proposition}

(i) The $A$-module $M$ is graded free if and only if $$\dim M_r=\sum_{i+j=r}\dim A_i \,. \dim\barM_j$$
for all $r\geq0$

(ii) Assume that $M$ is graded free.  A  homogenous subspace $V$ is free generating space  (i.e., multiplication $A\otimes V\to M$ is an isomorphism)  if and only if the natural map $V\rightarrow \barM$ is an isomorphism.

\end{proposition}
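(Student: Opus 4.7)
The plan is to prove (i) by establishing the forward direction directly from the structure of a graded free module, and the converse via Lemma 1.5 combined with a dimension count. For the forward direction, if $M$ is graded free then there is an isomorphism $M \iso A \otimes V$ of graded $A$-modules, for a graded vector space $V$ (finite-dimensional overall, since $M$ is finitely generated and $A_0 = K$). Under this isomorphism $A_+ M$ corresponds to $A_+ \otimes V$, so $\barM = M/A_+ M$ is canonically identified with $A_0 \otimes V = V$. In particular $\dim \barM_j = \dim V_j$, and the tensor product gives $\dim M_r = \sum_{i+j=r} \dim A_i \cdot \dim V_j$, yielding the claimed identity.

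Conversely, suppose $\dim M_r = \sum_{i+j=r} \dim A_i \cdot \dim \barM_j$ for all $r$. Since $M$ is finitely generated over the connected graded algebra $A$, the space $\barM$ is finite-dimensional and concentrated in finitely many degrees. Choose a homogeneous subspace $V \subseteq M$ that maps isomorphically onto $\barM$ under the natural projection $M \to \barM$. Then $V_r + (A_+M)_r = M_r$ for every $r$, so Lemma 1.5 supplies a surjective graded $A$-module homomorphism $A \otimes V \to M$. Comparing dimensions in degree $r$ we obtain $\dim (A \otimes V)_r = \sum_{i+j=r} \dim A_i \cdot \dim V_j = \sum_{i+j=r} \dim A_i \cdot \dim \barM_j = \dim M_r$; since each graded piece is finite-dimensional, a surjection of equal dimension is an isomorphism, so $M$ is graded free on $V$.

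Part (ii) follows by the same two techniques. Assume $M$ is graded free. If $A \otimes V \to M$ is an isomorphism, then the argument of the forward direction of (i), applied to this particular free presentation, identifies $V$ with $\barM$ via the projection. Conversely, if $V \to \barM$ is an isomorphism, then $V_r + (A_+ M)_r = M_r$ for all $r$, and Lemma 1.5 delivers surjectivity of $A \otimes V \to M$; part (i) together with $\dim V_j = \dim \barM_j$ then shows that source and target have equal finite dimension in each degree, so the surjection is an isomorphism.

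The proof is essentially a graded Nakayama-type lemma, and no step is genuinely hard. The only care that has to be taken is the book-keeping around finite-dimensionality: one needs each $A_r$ and each $M_r$ to be finite-dimensional (which follows from $A_0 = K$, $\dim A_1 < \infty$, $A$ generated by $A_1$, and $M$ finitely generated) and $\barM$ to live in finitely many degrees, so that the sums $\sum_{i+j=r} \dim A_i \cdot \dim \barM_j$ are finite and the comparison \emph{surjective plus equal dimension implies isomorphism} is valid in each graded piece.
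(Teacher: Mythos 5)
Your proof is correct and follows essentially the same route as the paper: forward direction by identifying $\barM$ with a free generating space of a graded free module and counting dimensions, converse by choosing a homogeneous complement $V$ of $A_+M$, invoking the surjectivity lemma, and comparing graded dimensions to upgrade the surjection $A\otimes V\to M$ to an isomorphism. The paper handles the same finiteness book-keeping implicitly, so your added care on that point changes nothing of substance.
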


\begin{proof}

Assume that $M$ is graded free and $V$  is a homogeneous subspace freely generating $M$. Then the multiplication map $A\otimes V\rightarrow M$ is a linear  isomorphism  and induces an isomorphism, 

$$A\otimes V/A_+\otimes V\rightarrow\barM.$$

Hence the natural map $V\rightarrow \barM$ is an isomorphism.

\q We give $A\otimes V$ a grading with $A\otimes V=\bigoplus_{r=0}^\infty (A\otimes V)_r$, with $(A\otimes V)_r=\sum_{r=i+j} A_i\otimes V_j$, for $r\geq 0$.
The  isomorphism $A\otimes V\rightarrow M$ gives
$$\dim (A\otimes V)_r=\dim M_r$$
i.e.,
$$ \sum_{r=i+j} \dim A_i . \dim V_j= \dim M_r$$
and hence 
$$ \sum_{r=i+j} \dim A_i . \dim \barM_j= \dim M_r$$
for all $r\geq 0$.

\q  Suppose conversely  that $\sum _{i+j=r}\dim A_i\dim \barM_j=\dim M_r$ for all $r$.  Let $V$ be any homogeneous subspace of $M$ such that the natural map $V\to \barM$ is an isomorphism, i.e., $V=\oplus_{r=0}^\infty V_r$, where $V_r$ is a complement of  $(A_+M)_r$ in $M_r$  for each $r$.
By  the Lemma 2.1 above, the multiplication map   $A\otimes V\rightarrow M$ is  surjective. Hence the map 
$$\bigoplus_{r=i+j}A_i\otimes V_j\rightarrow M_r$$
is onto for all $r$. But 
$$\sum_{r=i+j}\dim A_i. \dim V_j=\sum_{r=i+j} \dim A_i . \dim \barM_j=M_r.$$
 Therefore, the above map is an isomorphism and so the multiplication map is an isomorphism. Hence $M$ is freely  generated by $V$. This proves everything.

\end{proof}

\q  We now suppose that $A$ and $M$ are $T(n)$-modules in such a way that the gradings $A=\bigoplus_{r=0}^\infty A_r$  and $M=\bigoplus_{r=0}^\infty M_r$ are module homomorphisms and that  multiplication the multiplication map $A\otimes A\to A$ the action $A\otimes M\to M$ are  $T(n)$-module homomorphisms.

\begin{proposition} Assume that $M$ is graded free and let $V_r$ be a $T(n)$-module complement of $(A_+M)_r$ in $M_r$, for each $r$,  and form the  $T(n)$-module $V=\bigoplus_{r\geq0}V_r$.  Then, for $r\geq 0$,  we have 

$$M_r\cong \bigoplus_{i+j=r}A_i\otimes V_j$$ 
as $T(n)$-modules.

\end{proposition}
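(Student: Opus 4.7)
The plan is to upgrade the linear statement of the preceding Proposition to a $T(n)$-equivariant statement, using the hypothesis that the multiplication and action maps are $T(n)$-module homomorphisms. The work has essentially already been done in Lemma~2.1 and the previous Proposition; what remains is to track the $T(n)$-action through the construction.

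First I would note that since $V_r$ is by hypothesis a $T(n)$-module complement of $(A_+M)_r$ in $M_r$, the quotient map $M_r \to \barM_r$ restricts to a $T(n)$-module isomorphism $V_r \to \barM_r$. In particular the assembled map $V \to \barM$ is an isomorphism, so part (ii) of the preceding Proposition applies and gives that the multiplication map $\mu: A \otimes V \to M$ is a linear isomorphism.

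Next I would argue that $\mu$ is in fact a map of $T(n)$-modules. The action map $A \otimes M \to M$ is a $T(n)$-module homomorphism by hypothesis, and $V$ is a $T(n)$-submodule of $M$ (being a direct sum of the $T(n)$-submodules $V_r$), so $\mu$ is the restriction of a $T(n)$-homomorphism to a $T(n)$-submodule of its source, hence itself $T(n)$-equivariant. Moreover $\mu$ respects the grading introduced on $A \otimes V$ in the proof of the preceding Proposition, since $A_i \cdot V_j \subseteq M_{i+j}$ by the fact that the structure maps are graded.

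Restricting the $T(n)$-module isomorphism $\mu$ to the degree-$r$ component therefore yields
$$\bigoplus_{i+j=r} A_i \otimes V_j = (A \otimes V)_r \;\xrightarrow{\;\sim\;}\; M_r$$
as $T(n)$-modules, which is the required statement. The proof is really just bookkeeping; the only point worth flagging is that a $T(n)$-module complement $V_r$ exists — and this is given in the hypothesis, so there is no genuine obstacle to overcome beyond assembling the pieces already provided.
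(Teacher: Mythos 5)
Your proof is correct, and it is exactly the argument the paper intends: the paper states this proposition without proof, treating it as the immediate $T(n)$-equivariant upgrade of part (ii) of the preceding proposition once the complements $V_r$ are chosen as $T(n)$-submodules. Your bookkeeping — $V\to\barM$ an isomorphism, hence $A\otimes V\to M$ a bijection by the previous proposition, which is $T(n)$-equivariant and graded because the action map is, so restricts to an isomorphism in each degree — supplies precisely the omitted details.
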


\bs

\q  We shall apply the above generalities to a tensor product of copies of the symmetric algebra $S(E)$ on the natural module $E$ for $G(n)$. Let $S=S(E_n)$. Then $S$ has  the subalgebra $R$ generated $e_1^l,\ldots,e_n^l$.  We note that $R$ is a $G(n)$-submodule and in fact $R$ is isomorphic to $S(\dotE_n)^F$, (where $F:G\to \dotG$ is the Frobenius morphism), via the $K$-algebra map taking $e_i\in \dotE_n$ to $e_i^l\in S(E_n)$.  

\q Let $m\geq 0$.  We set $A=R\otimes \cdots \otimes R$ ($m$ times) and $H=S\otimes \cdots\otimes S$ ($m$ times).  We regard $S$ as a module over $R$, via the inclusion map and hence $H=S\otimes \cdots\otimes S$ as a module over $A=R\otimes \cdots \otimes R$.  As a $G(n)$-module we have $A\iso \dotH^F$, where $\dotH= S(\dotE_n)\otimes \cdots \otimes S(\dotE_n)$.  The natural map $S^{\otimes m}\to \barH$ induces an isomorphism $\barS^{\otimes m}\to \barH$.

\q Suppose $M$ is a polynomial $G(n)$-module with decomposition with homogenous component $M_r$ of degree $r$, for $r\geq 0$, and each $M_r$ is finite dimensional. Then, for $\lambda\in \Lambda^+(n,r)$, we write $[M:L_n(\lambda)]$ for $[M_r:L_n(\lambda)]$.

\bs

\begin{proposition}For $\lambda\in \Lambda^+(n)$ we have

\begin{align*}[S&(E_n)^{\otimes m}:L_n(\lambda)]\cr
&=\sum_{\mu,\tau\in\Lambda^+(n)}[\barH:L_n(\mu)]\times [S(\dotE_n)^{\otimes m}:\dotL_n(\tau)]\times [L_n(\mu)\otimes \dotL_n(\tau)^F:L_n(\lambda)].
\end{align*}

\end{proposition}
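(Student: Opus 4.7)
The plan is to derive the identity at the level of formal $T(n)$-characters and then read off composition factor multiplicities by comparing coefficients of the linearly independent characters $\ch L_n(\lambda)$. The key structural input is that $H = S(E_n)^{\otimes m}$ is a graded free module over the subalgebra $A = R^{\otimes m}$, with free generating space that, as a $T(n)$-module, is isomorphic to $\barH \cong \barS(E_n)^{\otimes m}$.

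To see the freeness, recall that the monomials $e_1^{a_1}\cdots e_n^{a_n}$ form a $K$-basis of $S(E_n)$; the sub-basis with each $0 \le a_i < l$ maps isomorphically onto $\barS(E_n)$, while the sub-basis with all $a_i$ divisible by $l$ is a basis of $R$. Hence $\dim S(E_n)_r = \sum_{i+j=r} \dim R_i \cdot \dim \barS(E_n)_j$, and taking $m$-fold tensor products gives the corresponding identity $\dim H_r = \sum_{i+j=r} \dim A_i \cdot \dim \barH_j$. Proposition 2.5(i) then shows $H$ is graded free over $A$, and Proposition 2.7 produces a graded $T(n)$-submodule $V$ with $V \cong \barH$ and $H \cong A \otimes V$ as $T(n)$-modules. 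Passing to characters yields $\ch H = \ch A \cdot \ch \barH$.

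Since $A \cong \dotH^F$ as $G(n)$-modules, we have $\ch A = (\ch \dotH)^F$. Expanding composition series gives $\ch \barH = \sum_\mu [\barH : L_n(\mu)]\, \ch L_n(\mu)$ and $\ch \dotH = \sum_\tau [\dotH : \dotL_n(\tau)]\, \ch \dotL_n(\tau)$, whence $\ch A = \sum_\tau [\dotH : \dotL_n(\tau)]\, \ch(\dotL_n(\tau)^F)$. Multiplying the two expansions and using $\ch L_n(\mu) \cdot \ch(\dotL_n(\tau)^F) = \ch\bigl(L_n(\mu)\otimes \dotL_n(\tau)^F\bigr)$ we obtain
\begin{equation*}
\ch H = \sum_{\mu,\tau} [\barH : L_n(\mu)]\, [\dotH : \dotL_n(\tau)]\, \ch\bigl(L_n(\mu)\otimes \dotL_n(\tau)^F\bigr).
\end{equation*}
Expanding each $\ch\bigl(L_n(\mu)\otimes \dotL_n(\tau)^F\bigr) = \sum_\lambda [L_n(\mu)\otimes \dotL_n(\tau)^F : L_n(\lambda)]\, \ch L_n(\lambda)$ and comparing coefficients against $\ch H = \sum_\lambda [H : L_n(\lambda)]\, \ch L_n(\lambda)$, via the linear independence of the $\ch L_n(\lambda)$, yields the stated formula.

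The only technical point is establishing the freeness in the first step; thereafter everything is formal manipulation with characters. The freeness is immediate from the explicit tensor basis, and no subtlety about $G(n)$-equivariance of the free generating space $V$ arises, since character identities already capture all information about composition factor multiplicities.
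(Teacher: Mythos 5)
Your proof is correct and follows essentially the same route as the paper: both rest on Proposition 2.7 (hence on the graded freeness of $H=S(E_n)^{\otimes m}$ over $A=R^{\otimes m}$, which you verify explicitly via the monomial basis and the dimension criterion, a point the paper leaves implicit) to obtain the character identity $\ch H=\ch A\cdot \ch \barH$ with $\ch A=(\ch \dotH)^F$, after which one expands into simple characters and compares multiplicities degree by degree. The only slip is a citation: the freeness criterion you use is Proposition 2.6(i), not 2.5(i).
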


\begin{proof}

Let $r$ be the degree of $\lambda$. Then we have  $[H:L_n(\lambda)]=[H_r:L_n(\lambda)]$. Now by Proposition 2.7  the $T(n)$-modules $H_r$ and $\bigoplus_{r=i+j} \barH_i\otimes A_j$ have the same character. Hence we have
\begin{align*}
[H:&L_n(\lambda)]=\sum_{r=i+j} [\barH_i\otimes A_j:L_n(\lambda)]=\sum_{r=i+lj} [\barH_i\otimes \dotH_j^F:L_n(\lambda)]\cr
&=\sum_{\substack{r=i+lj,\\ \mu,\tau\in\Lambda^+(n)}}  [\barH_i:L_n(\mu)]\times [\dotH_j^F:\dotL_n(\tau)^F]\times [L_n(\mu)\otimes \dotL_n(\tau)^F:L_n(\lambda)]\cr
&=\sum_{\mu,\tau\in\Lambda^+(n)}[\barH:L_n(\mu)]\times [\dotH:\dotL_n(\tau)]\times [L_n(\mu)\otimes \dotL_n(\tau)^F:L_n(\lambda)]
\end{align*}
as required.

\end{proof}

\q If $K$ has  characteristic $p>0$  then we have the usual Frobenius $\dotF:\dotG(n)\to \dotG(n)$, whose comorphism takes $c_{ij}$ to $c_{ij}^p$, for $1\leq i,j\leq n$.  In that case we write $J$ for $H$ and $\barJ$ for $\barH$. Repeating the above 
 Proposition  we obtain  the following.

\begin{corollary} Suppose $K$ has positive characteristic. Let $\lambda\in \Lambda^+(n)$. Then, for all sufficiently large $N$ (depending on $\lambda$)  we have:

(i) $[J:L_n(\lambda)]=[\barH\otimes  (\barJ \otimes \barJ^{\dotF} \dots\otimes \barJ^{\dotF^{N-1}})^F:L_n(\lambda)]$; and 

(ii) $\lambda$ is $m$-good if and only if  there exists an element   $\mu^0\in \Lambda^+(n)$ which is $m$-special for $G(n)$ and elements $\mu^1,\dots,\mu^N\in \Lambda^+(n)$  which are $m$-special for $\dotG(n)$  such that
$[L_n(\mu^0)\otimes (\dotL_n(\mu^1) \otimes \dotL_n(\mu^2)^\dotF \dots \otimes \dotL_n(\mu^N)^{\dotF^{N-1}})^F:L_n(\lambda)]\neq 0$.

\end{corollary}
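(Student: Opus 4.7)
The plan is to iterate Proposition 2.9 and then extract from the resulting character identity the criterion on composition factors.

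For (i), I would first apply Proposition 2.9 to $H = S(E_n)^{\otimes m}$, recognising the triple sum on its right-hand side as the character expansion of the composition multiplicity in a tensor product; this gives
$$[H:L_n(\lambda)] = [\barH \otimes \dotH^F : L_n(\lambda)],$$
using that composition multiplicities of a finite-dimensional module depend only on its formal character. Next I would apply the analog of Proposition 2.9 to the classical group $\dotG(n)$, with $\dotF$ in place of the quantum Frobenius $F$, to obtain the $\dotG$-character identity $\ch \dotH = \ch(\barJ \otimes \dotH^{\dotF})$. Iterating this $N-1$ further times gives
$$\ch \dotH = \ch\bigl(\barJ \otimes \barJ^{\dotF} \otimes \cdots \otimes \barJ^{\dotF^{N-1}} \otimes \dotH^{\dotF^{N}}\bigr).$$
Since $\dotH^{\dotF^{N}}$ has its nonzero positive-degree pieces supported only in degrees divisible by $p^N$, choosing $N$ so that $p^N>\deg\lambda$ ensures that only the trivial degree-zero component of this tail contributes in the degree range relevant to $[\,\cdot\,:L_n(\lambda)]$ once one applies the outer pullback by $F$ and tensors with $\barH$. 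This would establish (i).

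For (ii), I would use that $\lambda$ being $m$-good is equivalent to $[H:L_n(\lambda)]\neq 0$, which by (i) translates to
$$[\barH \otimes (\barJ \otimes \barJ^{\dotF} \otimes \cdots \otimes \barJ^{\dotF^{N-1}})^F : L_n(\lambda)] \neq 0.$$
I would then invoke repeatedly the elementary fact that, for finite-dimensional modules $V_1,V_2$ and an irreducible $L$, $[V_1\otimes V_2:L]\neq 0$ if and only if there exist composition factors $L(\mu_1)$ of $V_1$ and $L(\mu_2)$ of $V_2$ with $[L(\mu_1)\otimes L(\mu_2):L]\neq 0$, a direct consequence of the additivity of characters. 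Applied inductively to the outer factor $\barH$ (contributing an $m$-special $\mu^0$ for $G(n)$) and to the inner $\dotG$-module $\barJ \otimes \barJ^{\dotF} \otimes \cdots \otimes \barJ^{\dotF^{N-1}}$ (whose composition factors are, together with Steinberg's Tensor Product Theorem 1.3.1, of the form $\dotL_n(\mu^1) \otimes \dotL_n(\mu^2)^{\dotF} \otimes \cdots \otimes \dotL_n(\mu^N)^{\dotF^{N-1}}$ with each $\mu^i$ an $m$-special partition for $\dotG(n)$), and then pulling back by $F$, this yields exactly the criterion stated in (ii).

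The main technical obstacle is the passage from an a-priori infinite tower of Frobenius twists down to a finite one: one must verify that for fixed $\lambda$ a sufficiently large $N$ makes the tail $\dotH^{\dotF^{N}}$ contribute only through its trivial summand in the grading degree controlling $[\,\cdot\,:L_n(\lambda)]$. A secondary care point is the bookkeeping of successive twists, keeping straight which module is naturally a $G$- or $\dotG$-module and matching the exponents $\dotF^{i-1}$ in the statement to the $i$th twisted factor $\barJ^{\dotF^{i-1}}$ in the iteration; once these are under control, the remainder is a routine combination of the classical-Frobenius iteration of Proposition 2.9 with the Grothendieck-group calculation of nonvanishing multiplicities in tensor products.
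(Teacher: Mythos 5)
Your approach is exactly what the paper means by its one-sentence proof (``Repeating the above Proposition we obtain the following''), and the argument is correct. Two small remarks. First, the result you call ``Proposition~2.9'' is the preceding Proposition~2.8; the present statement is itself Corollary~2.9. Second, and more substantively: your phrasing suggests that the composition factors of $\barJ \otimes \barJ^{\dotF} \otimes \cdots \otimes \barJ^{\dotF^{N-1}}$ ``are, together with Steinberg's Tensor Product Theorem, of the form $\dotL_n(\mu^1)\otimes \dotL_n(\mu^2)^{\dotF}\otimes\cdots\otimes\dotL_n(\mu^N)^{\dotF^{N-1}}$.'' This is not quite right as stated, since the $\mu^i$ produced by your ``elementary fact'' need not be $p$-restricted, so these tensor products need not be irreducible, and Steinberg's theorem is not the tool required. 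What the iterated application of your elementary fact actually gives (and is all that is needed) is that $[\barJ\otimes\cdots\otimes\barJ^{\dotF^{N-1}}:\dotL_n(\tau)]\neq 0$ if and only if there exist $m$-special $\mu^1,\ldots,\mu^N$ with $[\dotL_n(\mu^1)\otimes\cdots\otimes\dotL_n(\mu^N)^{\dotF^{N-1}}:\dotL_n(\tau)]\neq 0$; combined with the outer factor $\barH$ and the fact that replacing a tensor factor by one of its composition factors cannot raise composition multiplicities, this yields (ii) in both directions. With that adjustment the proof is complete and faithful to the paper.
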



\bigskip\bigskip
\bigskip\bigskip


\section{Reciprocity, Row Removal and Node Removal}

\q The following will be useful to us immediately and in Section 6. 

\begin{lemma} Let $\lambda$ be a non-restricted partition. If $\mu$ is a partition such that $L(\mu)$ is a composition factor of $L(\lambda)\otimes V$, for some polynomial module $V$, then $\mu$ is non-restricted.
\end{lemma}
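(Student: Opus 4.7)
My plan is to apply Steinberg's tensor product theorem twice to separate the restricted part of $\lambda$ from its Frobenius-twisted part, and then to exploit the elementary fact that a polynomial $\dotG$-module of positive degree cannot have the trivial module as a composition factor. Since $\lambda$ is non-restricted, I write $\lambda=\lambda^0+l\barlambda$ with $\lambda^0\in X_1(n)$ and $0\neq\barlambda\in\Lambda^+(n)$. Theorem 1.3.1 then gives $L(\lambda)=L(\lambda^0)\otimes\dotL(\barlambda)^F$, and hence
\[L(\lambda)\otimes V=W\otimes\dotL(\barlambda)^F, \qquad W:=L(\lambda^0)\otimes V.\]
The module $W$ is polynomial, so a composition series of $W$ yields a filtration of $L(\lambda)\otimes V$ whose successive quotients have the form $L(\sigma)\otimes\dotL(\barlambda)^F$ for various $\sigma\in\Lambda^+(n)$. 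Every composition factor of $L(\lambda)\otimes V$ is therefore a composition factor of some such section, so it suffices to analyse a single section.

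For a fixed such $\sigma$, I will write $\sigma=\sigma^0+l\bar\sigma$ and apply Steinberg's theorem a second time, using that the Frobenius twist is a tensor functor, to obtain
\[L(\sigma)\otimes\dotL(\barlambda)^F=L(\sigma^0)\otimes\bigl(\dotL(\bar\sigma)\otimes\dotL(\barlambda)\bigr)^F.\]
The key observation is that the $\dotG$-module $N:=\dotL(\bar\sigma)\otimes\dotL(\barlambda)$ is polynomial of degree $|\bar\sigma|+|\barlambda|\geq|\barlambda|>0$, so all its weights have the same strictly positive total; in particular $0$ is not a weight of $N$. Consequently the trivial module $\dotL(0)=K$ cannot occur as a composition factor of $N$, and every composition factor $\dotL(\bar\rho)$ of $N$ satisfies $\bar\rho\neq 0$.

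Threading this back through the Steinberg factorisation, every composition factor of $L(\sigma)\otimes\dotL(\barlambda)^F$ has the form $L(\sigma^0)\otimes\dotL(\bar\rho)^F=L(\sigma^0+l\bar\rho)$ with $\bar\rho\neq 0$; by uniqueness of the decomposition $\mu=\mu^0+l\bar\mu$ with $\mu^0\in X_1(n)$ and $\bar\mu\in\Lambda^+(n)$, any such weight is non-restricted. There is no real obstacle in the argument: the crux is the degree observation that rules out trivial composition factors in the $\dotG$-twisted piece, and the rest is a routine repackaging using Steinberg's theorem and the fact that Frobenius twist commutes with tensor products.
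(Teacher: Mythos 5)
Your proof is correct and follows essentially the same route as the paper's: write $\lambda=\lambda^0+l\barlambda$ with $\barlambda\neq 0$, pass to composition factors of $L(\lambda^0)\otimes V$, apply Steinberg's tensor product theorem a second time to land in $L(\sigma^0)\otimes\bigl(\dotL(\bar\sigma)\otimes\dotL(\barlambda)\bigr)^F$, and conclude from the strictly positive degree of the twisted factor that the non-restricted part of $\mu$ is nonzero. No gaps; the degree observation you isolate is exactly the paper's closing step.
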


\begin{proof} We write $\lambda=\lambda^0+l\barlambda$, with $\lambda^0,\barlambda$ partitions with  $\lambda^0$ restricted and $\barlambda\neq 0$. Then $L(\lambda)\otimes V=L(\lambda^0)\otimes \dotL(\barlambda)^F\otimes V$ so that $L(\mu)$ is a composition factor of $\dotL(\barlambda)^F\otimes L(\tau)$, for some partition $\tau$ such that $L(\tau)$ is a composition factor of $L(\lambda^0)\otimes V$. We have $\tau=\tau^0+l\bartau$, for partitions $\tau^0,\bartau$, with $\tau^0$ restricted. Then $L(\mu)$ is a composition factor of $L(\tau^0)\otimes (\dotL(\bartau)\otimes \dotL(\barlambda))^F$ and hence, by Steinberg's tensor product theorem, we have $\mu=\tau^0+l\barmu$,  where $\dotL(\barmu)$ is a composition factor of $\dotL(\bartau)\otimes \dotL(\barlambda)$. Thus $\dotL(\barmu)$ is polynomial of degree 
$$\deg(\bartau)+ \deg(\barlambda)\geq \deg(\barlambda)>0.$$
Thus $\barmu\neq 0$ and $\mu$ is not restricted.

\end{proof}

We shall also need the following result.

\begin{proposition}

Let $m$ be a positive integer. For  a restricted partition $\lambda$,   the following are equivalent:

(i) $\lambda$ is $m$-good;

(ii) $\lambda$ is $m$-special;

(iii) $\l(\Mull(\lambda'))\leq m$.

\end{proposition}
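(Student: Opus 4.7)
The plan is to split the three-way equivalence into two pieces: (i) $\Leftrightarrow$ (ii), which I handle via the reciprocity formula in Proposition 2.8, and (i) $\Leftrightarrow$ (iii), which follows from Lemma 2.2 combined with Proposition 1.5.2.

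For (i) $\Leftrightarrow$ (ii), I apply Proposition 2.8 to expand $[S(E_n)^{\otimes m}:L_n(\lambda)]$ as a sum over pairs $(\mu,\tau)\in\Lambda^+(n)\times\Lambda^+(n)$ and examine which terms survive when $\lambda$ is restricted. Writing $\mu=\mu^0+l\barmu$, Steinberg's Tensor Product Theorem gives that a composition factor of $L_n(\mu)\otimes \dotL_n(\tau)^F$ has the form $L_n(\mu^0+l\nu)$, where $\dotL_n(\nu)$ is a composition factor of $\dotL_n(\barmu)\otimes \dotL_n(\tau)$. Requiring this equal the restricted partition $\lambda$ forces $\mu^0=\lambda$ and $\nu=0$, and since $\dotL_n(\barmu)\otimes \dotL_n(\tau)$ is polynomial of degree $|\barmu|+|\tau|$, the trivial module $\dotL_n(0)$ can occur as a composition factor only when $\barmu=\tau=0$. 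Thus only the term $(\mu,\tau)=(\lambda,0)$ contributes, and the reciprocity sum collapses to $[\barH:L_n(\lambda)]=[\barS(E_n)^{\otimes m}:L_n(\lambda)]$ via the identification $\barS(E_n)^{\otimes m}\cong\barH$ from Section 2, which gives (i) $\Leftrightarrow$ (ii) for restricted $\lambda$.

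For (i) $\Leftrightarrow$ (iii), Lemma 2.2 reformulates the $m$-good condition as the existence of some $\mu\in\Lambda^+(n)$ with $\l(\mu)\leq m$ and $[\nabla_n(\mu):L_n(\lambda)]\neq 0$. Proposition 1.5.2 identifies $\Mull(\lambda')$ as the unique maximum of the set $S=\{\mu\in\Lambda^+(n):[\nabla_n(\mu):L_n(\lambda)]\neq 0\}$, and all elements of $S$ are partitions of the same integer $r=\deg(\lambda)$. A short check shows that for partitions $\mu,\nu$ of a fixed integer $r$, the dominance inequality $\mu\leq\nu$ forces $\l(\nu)\leq\l(\mu)$: if $k=\l(\mu)$, then $\nu_1+\cdots+\nu_k\geq\mu_1+\cdots+\mu_k=r$ by dominance, while the total sum $\nu_1+\nu_2+\cdots$ also equals $r$, so $\nu_j=0$ for $j>k$. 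Hence $S$ contains an element of length at most $m$ if and only if its maximum $\Mull(\lambda')$ does, which gives (i) $\Leftrightarrow$ (iii); the reverse direction is immediate since $\Mull(\lambda')\in S$ itself witnesses the property.

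The main piece of bookkeeping is the collapse of the sum in Proposition 2.8. No individual step is deep, but the interplay of Steinberg's decomposition with the degree and restrictedness constraints on $\lambda$ has to be tracked carefully. Once that is checked, the rest of the argument is essentially formal and uses only results already in hand.
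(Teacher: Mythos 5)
Your argument is correct and close in spirit to the paper's, but the treatment of (i) $\Leftrightarrow$ (iii) takes a slightly different route. The paper applies the Schur functor $f$, uses $f\nabla(\mu)=\Sp(\mu)$ and $fL(\lambda)=D^{\Mull(\lambda')}$, and then invokes James's Corollary 12.2 ($[\Sp(\mu):D^\nu]\neq 0 \Rightarrow \nu\geq\mu$) together with the observation that dominance reverses length. You instead stay entirely at the level of $G(n)$: you take the maximality statement of Proposition 1.5.2 (that $\Mull(\lambda')$ is the unique dominance-maximum of $S=\{\mu:[\nabla(\mu):L(\lambda)]\neq 0\}$) as the key input, and combine it with the same elementary fact that, for partitions of equal degree, $\mu\leq\nu$ forces $\l(\nu)\leq\l(\mu)$. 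The two arguments carry the same mathematical content — the crucial fact is the dominance maximality of $\Mull(\lambda')$ — but yours avoids the detour through the Hecke algebra and is, if anything, a little tighter. For (i) $\Leftrightarrow$ (ii), you unfold the content of Lemma 3.1 directly via Steinberg's theorem and a degree count instead of citing the lemma; this is the same argument, just inlined. One small remark: when you say ``only the term $(\mu,\tau)=(\lambda,0)$ contributes'', you should observe that the surviving factor $[S(\dot E_n)^{\otimes m}:\dot L_n(0)]=1$ (degree-zero component) so that the sum really does collapse to $[\barH:L_n(\lambda)]$; you implicitly use this, and it is true, but it deserves a word.
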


\begin{proof}  We work with modules for quantum general linear groups of degree $n\geq r=\deg(\lambda)$.

(i) $\Rightarrow$ (ii) 
Suppose that $\lambda$ is $m$-good. Then  putting $H=S(E)^{\otimes m}$ we have $[H:L(\lambda)]\neq0$. By Proposition 2.8  there exist partitions $\mu$ and $\tau$ such that $\mu$ is $m$-special and $[L(\mu)\otimes \dotL(\tau)^F:L(\lambda)]\neq0$.  By Lemma 3.1, $\tau=0$, so that $\lambda=\mu$, which is $m$-special.

\smallskip

(ii) $\Rightarrow$ (i) This is clear.

\smallskip

(i) $\Rightarrow$ (iii)  Since  $\lambda$ is $m$-good, by Lemma 2.2  we have  $[\nabla(\mu):L(\lambda)]\neq0$ for some partition $\mu$ with at most $m$ parts. Hence applying the Schur functor $f:\mod(S(n,r))\to \mod(\Hec(r))$ we get 
$$[f\nabla(\mu):f L(\lambda)]=[\Sp(\mu):D^{\m(\lambda')}]\neq0.$$  

Now,  by \cite{James}, Corollary 12.2. we get that $\m(\lambda')\geq \mu$ and $\Mull(\lambda')$ has  at most $m$ parts.

\smallskip

(iii) $\Rightarrow$ (i). Suppose that  $\m(\lambda')$ has at most $m$ parts   and write  $\mu=\m(\lambda')$.  We have that $\nabla(\mu)$ appears as a section of a good filtration of $S^\mu E$, see e.g., \cite{DG1}, Lemma 3.8.   Moreover applying the Schur functor  to  $[\nabla(\mu):L(\lambda)]$  we get that,
 $$[\nabla(\mu):L(\lambda)]=[\Sp(\mu):D^\mu]=1$$
and $\lambda$ is $m$-good by Lemma 2.2.

\end{proof}

\q We fix $n$. For $\lambda=(\lambda_1,\ldots,\lambda_n)\in \Lambda^+(n)$ with $\lambda_1\leq m(l-1)$ we define $\lambda^\dagger\in \Lambda^+(n)$ by  
$$\lambda^\dagger=(m(l-1)-\lambda_n,\ldots, m(l-1)-\lambda_2,m(l-1)-\lambda_1).$$

\begin{remark}   For a finite dimensional $G(n)$-module and $\lambda\in X^+(n)$ the composition multiplicity $[V:L(\lambda)]$ is the coefficient $a_\lambda$ of $\ch L(\lambda)$ in the expression 
$\ch V=\sum_{\mu\in X^+(N)} a_\mu \ch L(\mu)$ (with all $a_\mu$ non-negative integers). If follows that 
for  finite dimensional $G(n)$-modules $U,V$ and $\lambda\in X^+(n)$ we have $[U\otimes V:L(\lambda)]=[U^*\otimes V^*:L(\lambda^*)]$.  This observation will be used in the proof of the following result.

\end{remark}

\begin{lemma} (Reciprocity Principle.) Let $\lambda=(\lambda_1,\ldots,\lambda_n)\in \Lambda^+(n)$ with $\lambda_1\leq m(l-1)$.  Then $\lambda$ is $m$-special if and only if $\lambda^\dagger$ is $m$-special.

\end{lemma}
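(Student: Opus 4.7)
The plan is to exhibit a $G$-equivariant self-duality of $\barS(E_n)$ up to a twist by a power of the determinant. The top homogeneous component $\barS^{n(l-1)}(E_n)$ is one-dimensional, spanned by $e_1^{l-1}\cdots e_n^{l-1}$ of weight $(l-1)\omega$, and hence as a $G$-module is isomorphic to $K_{(l-1)\omega}$.

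First I would construct a $G$-equivariant perfect pairing
$$\barS^r(E_n)\otimes \barS^{n(l-1)-r}(E_n)\longrightarrow \barS^{n(l-1)}(E_n)\cong K_{(l-1)\omega}$$
induced by multiplication in $\barS(E_n)$. Equivariance is automatic, since $S(E_n)$ is a $G$-module algebra and $\barS(E_n)$ is a quotient by the $G$-submodule generated by $e_1^l,\ldots,e_n^l$. Nondegeneracy is a short check on the monomial basis displayed at the start of Section 2: the product $e_1^{r_1}\cdots e_n^{r_n}\cdot e_1^{s_1}\cdots e_n^{s_n}$ is a nonzero scalar multiple of the top monomial precisely when $s_i=(l-1)-r_i$ for every $i$, and is zero whenever some $r_i+s_i\geq l$. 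In the quantum setting, rearranging factors past one another introduces nonzero powers of $q$, which does not affect nondegeneracy. Summing over $r$ yields the $G$-module isomorphism $\barS(E_n)^*\cong \barS(E_n)\otimes K_{-(l-1)\omega}$, and consequently
$$\bigl(\barS(E_n)^{\otimes m}\bigr)^*\cong \barS(E_n)^{\otimes m}\otimes K_{-m(l-1)\omega}.$$

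Using the Remark above together with the elementary fact $L(\mu)\otimes K_{s\omega}=L(\mu+s\omega)$, I would then compute
\begin{align*}
\bigl[\barS(E_n)^{\otimes m}:L(\lambda)\bigr]
&=\bigl[\bigl(\barS(E_n)^{\otimes m}\bigr)^*:L(\lambda^*)\bigr]\\
&=\bigl[\barS(E_n)^{\otimes m}\otimes K_{-m(l-1)\omega}:L(\lambda^*)\bigr]\\
&=\bigl[\barS(E_n)^{\otimes m}:L(\lambda^*+m(l-1)\omega)\bigr].
\end{align*}
Since $\lambda^*=(-\lambda_n,\ldots,-\lambda_1)$, the hypothesis $\lambda_1\leq m(l-1)$ makes $\lambda^*+m(l-1)\omega=(m(l-1)-\lambda_n,\ldots,m(l-1)-\lambda_1)=\lambda^\dagger$ a genuine partition. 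Hence $\lambda$ is $m$-special if and only if $\lambda^\dagger$ is $m$-special.

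The main technical input is the self-duality isomorphism $\barS(E_n)^*\cong \barS(E_n)\otimes K_{-(l-1)\omega}$; once that is in place, the Lemma reduces to the short manipulation of composition multiplicities above. The combinatorial heart of the argument is the observation that, under multiplication in $\barS(E_n)$, each monomial basis vector pairs nontrivially with exactly one ``complementary'' monomial, the complement of $(r_1,\ldots,r_n)$ being $(l-1-r_1,\ldots,l-1-r_n)$.
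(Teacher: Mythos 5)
Your proposal is correct and is essentially the paper's own argument: the key input in both is the perfect pairing $\barS^{j}(E_n)\otimes\barS^{n(l-1)-j}(E_n)\to\barS^{n(l-1)}(E_n)\cong D^{\otimes(l-1)}$ given by multiplication, combined with the observation that $[U:L(\lambda)]=[U^*:L(\lambda^*)]$ and absorption of determinant twists into highest weights. The only difference is packaging -- you assemble the pairing into a single self-duality $\barS(E_n)^*\cong\barS(E_n)\otimes D^{\otimes-(l-1)}$ and dualise $\barS(E_n)^{\otimes m}$ at once, whereas the paper performs the same dualisation degree-by-degree on the summands $\barS_{r_1}\otimes\cdots\otimes\barS_{r_m}$.
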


\begin{proof}  Let $S=S(E)$ and $\barS=\barS(E)$.   The images of the elements $e_1^{a_1}\ldots e_n^{a_n}$, with $0\leq a_1,\ldots,a_n\leq l-1$ and $a_1+\dots +a_n=r$, form a basis of $\barS_r$.  In particular we have $\barS_{n(l-1)}\cong D^{\otimes (l-1)}$ and  $\barS_j=0$ for $j>n(l-1)$.  Let $0\leq i\leq n(l-1)$. Then the multiplication map $\barS_j\otimes \barS_{n(l-1)-j}\to \barS_{n(l-1)}$ is a $G(n)$-module map and a perfect pairing of $K$-spaces. Hence we have the natural isomorphism 
$$\barS_j\to \Hom_K(\barS_{n(l-1)-j}, \barS_{n(l-1)})=\barS_{n(l-1)-j}^*\otimes D^{\otimes (l-1)}.$$

\q Now we consider $\barH\iso \barS^{\otimes m}$.  Suppose $\lambda$ has degree $r$. Then we have $[\barH:L(\lambda)]=[\barH_r:L(\lambda)]$ and 
$$\barH_r \iso \bigoplus_{r=r_1+\cdots + r_m} \barS_{r_1}\otimes \cdots \otimes \barS_{r_m}$$
so that $[\barH:L(\lambda)]\neq 0$ if and only if there exists $r_1,\ldots,r_m\geq 0$ such that $r=r_1+\cdots + r_m$ and $[\barS_{r_1}\otimes \cdots \otimes \barS_{r_m}:L(\lambda)]\neq 0$. Moreover, we have 
$$[\barS_{r_1}\otimes \cdots \otimes \barS_{r_m}:L(\lambda)]=[\barS_{t_1}^*\otimes D^{\otimes (l-1)}\otimes\cdots \otimes \barS_{t_m}^*\otimes  D^{\otimes (l-1)}:L(\lambda)]$$
where $t_i=n(l-1)-r_i$, $1\leq i\leq m$. Dualising we thus get 
$$[\barS_{r_1}\otimes\cdots \otimes    \barS_{r_m}:L(\lambda)]    =[\barS_{t_1}\otimes\cdots \otimes \barS_{t_m}\otimes  D^{\otimes -m(l-1)}: L(\lambda)^*]$$
and this is 
$$[\barS_{t_1}\otimes\cdots \otimes \barS_{t_m}:  D^{\otimes m(l-1)}\otimes L(\lambda^*)].$$
But now $\lambda^*=(-\lambda_n,\ldots,-\lambda_2,-\lambda_1)$ and so 
\begin{align*}D^{\otimes m(l-1)}\otimes L(\lambda^*)&=L(m(l-1)-\lambda_n,\ldots,m(l-1)-\lambda_2,m(l-1)-\lambda_1)\cr
&= L(\lambda^\dagger)
\end{align*}
and the result follows.

\end{proof}

\q Combining the stability and reciprocity principles we deduce the following.

\begin{proposition} Let $m\geq 1$ and let $\lambda$ be a partition with $\lambda_1=m(l-1)$. Then $\lambda$ is $m$-special if and only if $(\lambda_2,\lambda_3,\ldots)$ is $m$-special.

\end{proposition}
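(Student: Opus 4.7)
The statement says that having first part exactly $m(l-1)$ amounts to being able to freely drop the first part when checking $m$-specialness. The natural strategy is to combine the Stability Principle (Lemma 2.3) with the Reciprocity Principle (Lemma 3.4): applying reciprocity at two different values of $n$ converts the operation $\lambda \mapsto \lambda^\dagger$ into the operation of deleting the first part, provided that first part is $m(l-1)$.

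Fix $n$ large enough that $\lambda \in \Lambda^+(n)$ and write $\mu = (\lambda_2,\ldots,\lambda_n) \in \Lambda^+(n-1)$. Since $\lambda_1 = m(l-1)$, the definition of $\lambda^\dagger \in \Lambda^+(n)$ gives
\[
\lambda^\dagger = \bigl(m(l-1)-\lambda_n,\,\ldots,\,m(l-1)-\lambda_2,\,0\bigr),
\]
so the last entry vanishes; view $\lambda^\dagger$ equivalently as the element $(m(l-1)-\lambda_n,\ldots,m(l-1)-\lambda_2)$ of $\Lambda^+(n-1)$. On the other hand, $\mu$ itself lies in $\Lambda^+(n-1)$ and its first part satisfies $\mu_1 = \lambda_2 \leq \lambda_1 = m(l-1)$, so the reciprocity principle is applicable to $\mu$ with respect to $n-1$; a direct inspection of the definition gives $\mu^\dagger = \lambda^\dagger$ inside $\Lambda^+(n-1)$.

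Now chain the equivalences:
\begin{align*}
\lambda \text{ is } m\text{-special w.r.t.\ } n
&\iff \lambda^\dagger \text{ is } m\text{-special w.r.t.\ } n \qquad \text{(Lemma 3.4 at }n\text{)}\\
&\iff \lambda^\dagger \text{ is } m\text{-special w.r.t.\ } n-1 \qquad \text{(Lemma 2.3, since }\ell(\lambda^\dagger)\leq n-1\text{)}\\
&\iff \mu \text{ is } m\text{-special w.r.t.\ } n-1 \qquad \text{(Lemma 3.4 at }n-1\text{, using }\mu^\dagger=\lambda^\dagger\text{)}\\
&\iff \mu \text{ is } m\text{-special w.r.t.\ } n \qquad \text{(Lemma 2.3 again).}
\end{align*}

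The two uses of the Stability Principle bridge the rank shift caused by dropping the trailing zero of $\lambda^\dagger$, and the two uses of the Reciprocity Principle (at ranks $n$ and $n-1$ respectively) implement the dualities on either side. The only point requiring a moment of care is verifying that $\mu^\dagger$, computed inside $\Lambda^+(n-1)$, coincides with $\lambda^\dagger$ viewed inside $\Lambda^+(n-1)$; this is a direct unwinding of the definition of $\dagger$ and is the sole non-trivial bookkeeping step.
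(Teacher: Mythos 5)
Your proof is correct and follows essentially the same route as the paper: two applications of the Reciprocity Principle bridged by the Stability Principle, exactly as the paper signals by prefacing the proposition with ``Combining the stability and reciprocity principles.'' You are merely more explicit about tracking the rank $n$ versus $n-1$ and verifying $\mu^\dagger = \lambda^\dagger$ than the paper, which works at $n=\ell(\lambda)$ and leaves the rank-independence implicit via the notation convention following Lemma 2.3.
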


\begin{proof}  Suppose $\lambda$ has length $n$. Then, applying the reciprocity principle, we have that  $\lambda$ is $m$-special if and only if $(m(l-1)-\lambda_n,m(l-1)-\lambda_{n-1},\ldots, m(l-1)-\lambda_2,m(l-1)-\lambda_1)$ is $m$-special, i.e.,  if and only if $(m(l-1)-\lambda_n,m(l-1)-\lambda_{n-1},\ldots, m(l-1)-\lambda_2)$ is $m$-special. However, applying the reciprocity principle once more, this is $m$-special if and only if $(\lambda_2,\lambda_3,\ldots,\lambda_n)$ is $m$-special.

\end{proof}

\q We now describe the principles of row  removal and node removal that will be used extensively  in Section 5.

\begin{proposition}   Let $n\geq 2$ and $m\geq 1$. If $\lambda=(\lambda_1,\ldots,\lambda_n)$ is an $m$-special (\resp.  $m$-good)  partition  then $(\lambda_1,\ldots,\lambda_{n-1})$ and $(\lambda_2,\ldots,\lambda_n)$ are $m$-special (\resp. $m$-good) partitions.

\end{proposition}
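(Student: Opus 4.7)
The plan is to restrict $\barS(E_n)^{\otimes m}$ (resp.\ $S(E_n)^{\otimes m}$) to an appropriate Levi subgroup of $G(n)$ and to track a distinguished composition factor of $L_n(\lambda)$ through that restriction.

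For the removal of the last row I take the Levi $L=G(n-1)\times G(1)\subset G(n)$ in which $G(1)$ scales $e_n$. Since $E_n=E_{n-1}\oplus Ke_n$ as an $L$-module, inspection of the explicit bases (as in the proof of Lemma~2.3) yields $\barS(E_n)|_L\cong \barS(E_{n-1})\otimes \barS(Ke_n)$ and likewise without the bars; rearranging tensor factors then gives
\[
\barS(E_n)^{\otimes m}|_L \;\cong\; \barS(E_{n-1})^{\otimes m}\otimes \barS(Ke_n)^{\otimes m},
\]
and the analogue for $S$. The crucial observation is that $v_\lambda$ lies in the subspace of $L_n(\lambda)$ on which $G(1)$ acts by $\lambda_n\ep_n$, because $\lambda_n$ is the minimum $n$-th coordinate among the weights of $L_n(\lambda)$ (every positive root $\ep_i-\ep_j$, $i<j$, has non-positive $n$-th coordinate). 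This subspace is $G(n-1)$-stable, and there $v_\lambda$ is a $G(n-1)$-highest weight vector of weight $(\lambda_1,\ldots,\lambda_{n-1})$, so the $G(n-1)$-submodule it generates (being a quotient of $\Delta_{n-1}((\lambda_1,\ldots,\lambda_{n-1}))$) admits $L_{n-1}((\lambda_1,\ldots,\lambda_{n-1}))$ as a composition factor. Hence $L_{n-1}((\lambda_1,\ldots,\lambda_{n-1}))\otimes K_{\lambda_n\ep_n}$ occurs as a composition factor of the $L$-module $L_n(\lambda)|_L$.

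If $\lambda$ is $m$-special then this composition factor appears in $\barS(E_{n-1})^{\otimes m}\otimes \barS(Ke_n)^{\otimes m}$. But $\barS(Ke_n)^{\otimes m}$ is a direct sum of one-dimensional $G(1)$-modules $K_{b\ep_n}$ with $0\le b\le m(l-1)$, and every weight of $\barS(E_n)^{\otimes m}$ has all entries in $\{0,\ldots,m(l-1)\}$, so $\lambda_n\le \lambda_1\le m(l-1)$ and $K_{\lambda_n\ep_n}$ does appear. Matching the $G(1)$-factor on both sides forces $L_{n-1}((\lambda_1,\ldots,\lambda_{n-1}))$ to be a composition factor of $\barS(E_{n-1})^{\otimes m}$, and Lemma~2.3 then gives that $(\lambda_1,\ldots,\lambda_{n-1})$ is $m$-special. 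The $m$-good case is identical, using $S(Ke_n)^{\otimes m}$ in place of $\barS(Ke_n)^{\otimes m}$ and noting it contains $K_{b\ep_n}$ for every $b\ge 0$, so no weight restriction is required.

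For the removal of the first row I would run the symmetric argument with the opposite Levi $G(1)\times G(n-1)$ (where $G(1)$ now scales $e_1$): $v_\lambda$ sits in the maximal-$e_1$-weight subspace, which is $G(n-1)$-stable, and there $v_\lambda$ is a $G(n-1)$-highest weight vector of weight $(\lambda_2,\ldots,\lambda_n)$, so $K_{\lambda_1\ep_1}\otimes L_{n-1}((\lambda_2,\ldots,\lambda_n))$ is a composition factor of $L_n(\lambda)|_L$; after this the bookkeeping is verbatim. In the $m$-special case one may alternatively compose Lemma~3.5, the bottom row removal just established, and a second application of Lemma~3.5 to recover $(\lambda_2,\ldots,\lambda_n)$. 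The only substantive obstacle in the proof is the identification of the distinguished composition factor of $L_n(\lambda)|_L$; once that is in hand the remainder of the argument is entirely formal.
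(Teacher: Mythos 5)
Your proposal follows essentially the same route as the paper's proof: restrict to the Levi $G(n-1)\times G(1)$ (resp.\ $G(1)\times G(n-1)$) inside $G(n)$, identify $L_{n-1}(\mu)\otimes K_{\lambda_n\ep_n}$ (resp.\ $K_{\lambda_1\ep_1}\otimes L_{n-1}(\mu)$) as a composition factor of $L_n(\lambda)$ restricted to that Levi, decompose $S(E_n)$ or $\barS(E_n)$ accordingly, and match the $G(1)$-factor. The only difference is that you fill in the justification that the paper states in one line (``the $H$-module $L(\lambda)$ has highest weight $\lambda$ and so has $L_{n-1}(\mu)\otimes L_1(\lambda_n)$ as a composition factor'') by exhibiting $v_\lambda$ as a highest weight vector in the extremal $G(1)$-weight subspace, which is a welcome amount of extra detail but not a different method.
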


\begin{proof}  We give the argument for $m$-good. The $m$-special case is similar. We put $\mu=(\lambda_1,\ldots,\lambda_{n-1})$. Consider the natural module $E=E_n$ for $G(n)$. We have $E_n=E_{n-1}\oplus L$, where $L$ is the $K$-span of $e_n$ (and $E_{n-1}$ is the $K$-span of $e_1,\ldots,e_{n-1}$).  We regard $H=G(n-1)\times G(1)$ as a subgroup of $G(n)$, in the obvious way.  Then $E_n=E_{n-1}\oplus L$ is an $H$-module decomposition. Since $L(\lambda)$ is a composition factor of $S(E_n)^{\otimes m}$ it is a composition factor of $S^\alpha E_n$, for  some sequence $\alpha=(\alpha_1,\ldots,\alpha_m)$. The $H$-module $L(\lambda)$ has highest weight $\lambda$ and so has $L_{n-1}(\mu)\otimes L_1(\lambda_n)$ as a composition factor .

\q For  $r\geq 0$ we have $S^r(E)=\bigoplus_{r=r_1+r_2} S^{r_1}(E_{n-1}) \otimes S^{r_2} L$  as $H$-modules.  It follows that 
$L_{n-1}(\mu)\otimes L_1(\lambda_n)$ must be a composition factor of a module of the form $S^{u_1}(E_{n-1})\otimes\cdots\otimes  S^{u_m}(E_{n-1}) \otimes M$, for some $u_1,\ldots,u_m\geq 0$,  and one dimensional $G(1)$-module $M$.  Restricting to $G(n-1)$ gives that $\mu$ is $m$-good.

\q The result for $(\lambda_2,\ldots,\lambda_n)$ is obtained by restricting to $G(1)\times G(n-1)$ and arguing in the same way.

\end{proof}

\subsection*{Constrained  Modules and Node Removal}

We fix $m\geq 0$.  We say that a partition is $m$-constrained if it has at most $m$ parts.

\begin{definition}  Let $M$ be a finite dimensional polynomial module with a good filtration. We say that $M$ is $m$-constrained  if each $\lambda\in \Lambda^+(n)$ such that $(M:\nabla(\lambda))\neq 0$ is $m$-constrained.  We say that $M$ is $m$-deficient if  $(M:\nabla(\lambda))=0$ for every $m$-constrained  element $\lambda$ of $\Lambda^+(n)$. 

\end{definition}

\begin{remark} Note that if $M$ is a finite dimensional polynomial  module with a good filtration and character $\chi=\sum_{\lambda\in \Lambda^+(n)} r_\lambda \chi(\lambda)$ then $M$ is $m$-constrained if $\lambda$ is $m$-constrained whenever $r_\lambda\neq 0$ and $M$ is a $m$-deficient if $r_\lambda=0$ for all $m$-constrained $\lambda$.
\end{remark}

\begin{lemma}  Let $M$ be finite dimensional polynomial module with a good filtration and suppose that $M$ is $m$-deficient. Then for every finite dimensional polynomial module $V$ with a good filtration the polynomial module $M\otimes V$ is $m$-deficient.

\end{lemma}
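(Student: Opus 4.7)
The plan is to combine the standard good-filtration property of tensor products with the Littlewood--Richardson rule. Since both $M$ and $V$ are finite dimensional polynomial modules admitting good filtrations, the tensor product $M\otimes V$ again admits a good filtration (the Donkin--Mathieu tensor-product theorem, available also in the quantised polynomial setting). In particular the multiplicities $(M\otimes V:\nabla(\tau))$ are well defined for each $\tau\in\Lambda^+(n)$.

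Next I would compute these multiplicities character-theoretically. Writing $\chi(\lambda) = \ch \nabla(\lambda)$ for the Schur symmetric function in $n$ variables, we have
$$\ch(M\otimes V) \;=\; \sum_{\mu,\nu\in\Lambda^+(n)} (M:\nabla(\mu))\,(V:\nabla(\nu))\,\chi(\mu)\chi(\nu).$$
The Littlewood--Richardson rule expands $\chi(\mu)\chi(\nu) = \sum_{\tau} c^{\tau}_{\mu\nu}\,\chi(\tau)$ with non-negative integer coefficients $c^{\tau}_{\mu\nu}$, and the $\chi(\tau)$ are linearly independent in $\zed X(n)$, so comparing characters yields
$$(M\otimes V:\nabla(\tau)) \;=\; \sum_{\mu,\nu\in\Lambda^+(n)} (M:\nabla(\mu))\,(V:\nabla(\nu))\,c^{\tau}_{\mu\nu}.$$

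Finally, assume $\tau$ is $m$-constrained, so $\l(\tau)\leq m$. For any $\mu$ with $(M:\nabla(\mu))\neq 0$, the $m$-deficiency of $M$ forces $\l(\mu)>m$. However, $c^{\tau}_{\mu\nu}\neq 0$ requires $\mu\subseteq\tau$ as Young diagrams, and hence $\l(\mu)\leq \l(\tau)\leq m$, a contradiction. Every summand on the right-hand side therefore vanishes, so $(M\otimes V:\nabla(\tau)) = 0$, whence $M\otimes V$ is $m$-deficient. The only substantive inputs are the tensor-product theorem for good filtrations and the elementary containment $\mu\subseteq\tau$ for nonzero Littlewood--Richardson coefficients, both entirely standard in this setting, so I do not anticipate any real obstacle.
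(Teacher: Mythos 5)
Your proof is correct and follows essentially the same character-theoretic strategy as the paper's, so I will just record the small difference. The paper reduces the question to multiplication by the elementary symmetric functions $e_r=\chi(1^r)$ (these generate the ring of symmetric functions in $n$ variables) and then invokes Pieri's formula to see that every constituent $\chi(\tau)$ of $e_r\chi(\lambda)$ has $\tau\supseteq\lambda$. You instead appeal directly to the full Littlewood--Richardson rule and the fact that $c^\tau_{\mu\nu}\neq 0$ forces $\mu\subseteq\tau$; this is a heavier input but shortens the reduction. Both arguments also need to know that $M\otimes V$ again has a good filtration so that the notion of $m$-deficiency makes sense — you invoke the Donkin--Mathieu theorem explicitly, while the paper leaves this implicit in its appeal to the character-theoretic reformulation of Remark 3.9. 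The underlying combinatorial fact (Schur expansion of a product only involves partitions containing the factors) is identical, so the two arguments are best regarded as the same proof with different bookkeeping.
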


\begin{proof} By the above remark it is enough to show that the coefficient of $\chi(\tau)$ in the character of $M\otimes V$ is zero for all $m$-constrained $\tau\in \Lambda^+(n)$.  It follows that it is enough to note that for $\lambda,\mu\in \Lambda^+(n)$ with $\lambda$  being $m$-constrained
 the coefficient of $\chi(\tau)$  in $\chi(\lambda)\chi(\mu)$ is $0$ for all $m$-constrained $\tau\in \Lambda^+(n)$.  So it is enough to show that for any symmetric function $\psi$ in $n$ variables $ \psi\chi(\lambda)$ is a $\zed$-linear combination of Schur symmetric functions $\chi(\tau)$ with $\tau$ not $m$-constrained. The ring of symmetric function is generated by the elementary symmetric functions $e_r=\chi(1^r)$, for $1\leq r\leq n$ so it enough to show that each $e_r\chi(\lambda)$ is a sum of terms $\chi(\tau)$, with $\tau$ not $m$-constrained. However, by Pieri's formula $e_r\chi(\lambda)$ is a sum of terms $\chi(\tau)$ where the diagram of $\tau$ is obtained by adding boxes to the diagram of $\lambda$, so the result is clear.

\end{proof}

\begin{lemma} Let $\lambda\in \Lambda^+(n)$. Then $\lambda$ is $m$-good if and only if $I(\lambda)$ is not $m$-deficient.

\end{lemma}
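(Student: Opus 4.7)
The plan is to reduce everything to a direct combination of Lemma 2.2 with the Donkin reciprocity formula $(I(\lambda):\nabla(\mu))=[\nabla(\mu):L(\lambda)]$ recalled at the end of Section 1.2, using the definition of $m$-deficient via a good filtration.

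First I would unravel the two conditions. Since $I(\lambda)$ admits a good filtration, Remark 3.9 tells us that $I(\lambda)$ is $m$-deficient precisely when $(I(\lambda):\nabla(\tau)) = 0$ for every $m$-constrained $\tau \in \Lambda^+(n)$, i.e., for every partition $\tau$ with $\ell(\tau) \le m$. Consequently $I(\lambda)$ is \emph{not} $m$-deficient if and only if there exists some partition $\mu$ with $\ell(\mu) \le m$ such that $(I(\lambda):\nabla(\mu)) \neq 0$.

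Next I would apply reciprocity: $(I(\lambda):\nabla(\mu)) = [\nabla(\mu):L(\lambda)]$. So the failure of $m$-deficiency for $I(\lambda)$ is equivalent to the existence of a partition $\mu$ of length at most $m$ with $[\nabla(\mu):L(\lambda)] \neq 0$. But by Lemma 2.2 this is precisely the characterisation of $\lambda$ being $m$-good, and the equivalence follows.

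There is essentially no obstacle: both halves of the argument are direct substitutions, once one notices that the definition of $m$-deficiency for a module with a good filtration is exactly tailored to feed into the reciprocity formula. The only point to be careful about is using Remark 3.9 (interpreting $m$-deficiency in terms of the good filtration multiplicities rather than in terms of the character on the nose), but this is immediate from the fact that $(M:\nabla(\lambda))$ is independent of the choice of good filtration.
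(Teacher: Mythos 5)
Your proof is correct and is essentially identical to the paper's: both arguments combine Lemma 2.2 with the reciprocity formula $(I(\lambda):\nabla(\mu))=[\nabla(\mu):L(\lambda)]$ from Section 1.2 and the definition of $m$-deficiency (which is already stated directly in terms of good filtration multiplicities, so the appeal to the remark is not even needed). Nothing is missing.
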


\begin{proof} We have that $\lambda$ is $m$-good if and only if there exists some $m$-constrained partition $\mu$ such that $[\nabla(\mu):L(\lambda)]\neq 0$, by Lemma 2.2.  By reciprocity, as in Section 1.2,  this is if and only if there exists an $m$-constrained partition $\mu$ such that $(I(\lambda):\nabla(\mu))\neq 0$, i.e,. if and only if $I(\lambda)$ is not $m$-deficient.

\end{proof}

\bf Definitions \rm

\q Let $\lambda$ be a partition.  

(i)  We call a node $R$ of $\lambda$ (or more precisely of the diagram of $\lambda$)    {\it removable} if the removal of $R$ from the diagram of $\lambda$ leaves the diagram of a partition, which will be denoted $\lambda_R$. Thus the node  $R$ is removable node if it has the form $(i,\lambda_i)$ for some $1\leq i\leq  \l(\lambda)$ and either $i=\l(\lambda)$ or 
$\lambda_i >\lambda_{i+1}$.

(ii)  An {\it addable} node $A$ of $\lambda$ is an element of $\nat\times \nat$ such that the addition of $A$ to the diagram of $\lambda$ gives the diagram of a partition, which will be denoted $\lambda^A$. Thus $A$ is addable if it has the form $(i,\lambda_i+1)$ for some $1\leq i\leq \l(\lambda)$ and either $i=1$ or $\lambda_i<\lambda_{i-1}$ or  $A=(\l(\lambda)+1,1)$. 

(iii)  The {\it residue} of a node $A=(i,j)$ of a partition $\lambda$ is defined to be the congruence class of  $j-i$ modulo $l$. 

(iv) Let $A$ and $B$ be removable or addable nodes of $\lambda$. We  shall say that  $A$ is {\it lower} than $B$ if  $A=(i,r)$, $B=(j,s)$ and $i>j$. 

(v) We say that a removable node  of $\lambda$ is {\it suitable} if its  residue  is different from the residue of  each lower addable node.

(vi) We say that a removable node $A=(i,\lambda_i)$ of $\lambda$ is {\it co-suitable} if the {\it transpose node} $A'=(\lambda_i,i)$ is a suitable node for $\lambda'$.

\medskip

\q Recall (or see \cite{Mac}, I, Section 1, Exercise 8) that partitions $\lambda$ and $\mu$ of the same degree have  the same $l$-core if and only if for each $0\leq r<l$ then number of nodes of $\lambda$ of residue $r$ is equal to the number of nodes of $\mu$ with residue $r$.

\medskip

\begin{lemma}    Suppose that $\lambda$ is a partition and $R=(h,\lambda_h)$ is a suitable node of $\lambda$. Then, for all $n$ sufficiently large,  we have: 

(i) $I_n(\lambda)$ is a direct summand of $I_n(\lambda_R)\otimes E_n$;

(ii)  if $I_n(\lambda_R)$ is $m$-deficient then so is $I_n(\lambda)$.

Furthermore if  $\lambda$ is $m$-good then so is $\lambda_R$.

\end{lemma}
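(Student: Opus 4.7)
The plan is to prove (i); parts (ii) and the goodness assertion then follow quickly from Lemmas 3.10 and 3.11.  Setting $r=\deg\lambda$, the first step is to show that $I_n(\lambda_R)\otimes E_n$ is an injective object of $\pol(n,r)$.  For $n\geq r$, Schur--Weyl duality gives a Morita equivalence between $S(n,r)$ and $\Hec(r)$; since $\Hec(r)$ is a symmetric algebra, $E_n^{\otimes r}$ is a projective--injective generator-cogenerator of $\pol(n,r)$, and every indecomposable injective is a direct summand of it.  Hence $I_n(\lambda_R)$ is a summand of $E_n^{\otimes(r-1)}$, so $I_n(\lambda_R)\otimes E_n$ is a summand of the injective module $E_n^{\otimes r}$ and is itself injective.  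Writing $I_n(\lambda_R)\otimes E_n=\bigoplus_\mu I_n(\mu)^{m_\mu}$, it suffices to prove that $m_\lambda\geq 1$.

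Next, I would compute good-filtration multiplicities.  By the Pieri rule, $\nabla_n(\nu)\otimes E_n$ has a good filtration whose sections are the modules $\nabla_n(\nu+A)$ as $A$ ranges over addable nodes of $\nu$; combining this with a good filtration of $I_n(\lambda_R)$ and the reciprocity formula $(I_n(\lambda_R):\nabla_n(\nu))=[\nabla_n(\nu):L_n(\lambda_R)]$ yields
$$(I_n(\lambda_R)\otimes E_n:\nabla_n(\mu))=\sum_{R'\text{ removable of }\mu}[\nabla_n(\mu\setminus R'):L_n(\lambda_R)].$$
When $\mu$ lies in the block $B$ of $\lambda$, only removable nodes $R'$ of residue $\rho$ (the residue of $R$) can contribute, because $\mu\setminus R'$ must share an $l$-core with $\lambda_R$; in particular the $R'=R$, $\mu=\lambda$ contribution already gives $(I_n(\lambda_R)\otimes E_n:\nabla_n(\lambda))\geq 1$.

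The main obstacle is to upgrade this inequality to $m_\lambda\geq 1$: suitability does not in itself exclude addable nodes of $\lambda_R$ in rows strictly above $h$ from having residue $\rho$, and consequently other weights $\mu>\lambda$ in $B$ can appear in the good filtration of $I_n(\lambda_R)\otimes E_n$.  The natural tool is the translation principle: the functor $M\mapsto\operatorname{pr}_B(M\otimes E_n)$ is the translation functor from the block of $\lambda_R$ to $B$, and suitability of $R$---that no addable node of $\lambda$ below row $h$ has residue $\rho$---is precisely the combinatorial condition that forces this translation to send $L_n(\lambda_R)$ to a module containing $L_n(\lambda)$.  Since translation is exact and preserves injectives, it sends $I_n(\lambda_R)$ to an injective module containing $I_n(\lambda)$ as a direct summand, proving (i).  Part (ii) is then immediate from (i) and Lemma 3.10: $m$-deficiency of $I_n(\lambda_R)$ propagates to $I_n(\lambda_R)\otimes E_n$ by Lemma 3.10 and so to its summand $I_n(\lambda)$.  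The goodness clause is the contrapositive of (ii) via Lemma 3.11: if $\lambda$ is $m$-good then $I_n(\lambda)$ is not $m$-deficient, which forces $I_n(\lambda_R)$ not $m$-deficient, and so $\lambda_R$ is $m$-good.
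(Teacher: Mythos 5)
Your proposal correctly reduces the problem to proving that $I_n(\lambda)$ is a direct summand of $I_n(\lambda_R)\otimes E_n$, and you rightly identify the genuine subtlety: computing $(I_n(\lambda_R)\otimes E_n:\nabla_n(\lambda))\geq 1$ by reciprocity and Pieri does not by itself show that $L_n(\lambda)$ lies in the socle of $I_n(\lambda_R)\otimes E_n$, since suitability only controls addable nodes of $\lambda_R$ \emph{below} row $h$, not above it. However, the step where you resolve this---appealing to the translation principle and asserting that suitability is ``precisely the combinatorial condition that forces this translation to send $L_n(\lambda_R)$ to a module containing $L_n(\lambda)$,'' and that translation ``sends $I_n(\lambda_R)$ to an injective module containing $I_n(\lambda)$ as a direct summand''---is not justified and essentially assumes the conclusion. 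For the Schur algebra there may be several addable nodes of $\lambda_R$ with the residue of $R$ (those above row $h$), so $\operatorname{pr}_B(-\otimes E_n)$ is not a single-alcove translation functor in the sense needed for the clean statement you invoke; one must still argue that $L_n(\lambda)$ actually sits in the socle, which is exactly what needs proof.

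The paper avoids this by arguing at the level of good filtrations. Since $\nabla_n(\lambda_R)\hookrightarrow I_n(\lambda_R)$, one has $\nabla_n(\lambda_R)\otimes E_n\hookrightarrow I_n(\lambda_R)\otimes E_n$. Pieri gives a good filtration $0=M_{n+1}\leq\cdots\leq M_1=\nabla_n(\lambda_R)\otimes E_n$ with $M_i/M_{i+1}\cong\nabla_n(\lambda_R+\ep_i)$ when $(i,(\lambda_R)_i+1)$ is addable (and $0$ otherwise), ordered so that $M_h/M_{h+1}\cong\nabla_n(\lambda)$ and $M_{h+1}$ has only sections $\nabla_n(\lambda_R+\ep_i)$ with $i>h$. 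Suitability of $R$ means every such node has residue different from that of $R$, so $\lambda$ and $\lambda_R+\ep_i$ ($i>h$) have different $l$-cores, hence lie in different blocks, hence $\Ext^1_{G}(\nabla_n(\lambda),\nabla_n(\lambda_R+\ep_i))=0$; this splits $0\to M_{h+1}\to M_h\to\nabla_n(\lambda)\to 0$ and gives an embedding $\nabla_n(\lambda)\hookrightarrow I_n(\lambda_R)\otimes E_n$, which extends to an embedding of the injective hull $I_n(\lambda)$. This is entirely self-contained and sidesteps any appeal to translation-functor machinery. I would rework your proof to follow this route. A minor further point: your opening justification that $I_n(\lambda_R)\otimes E_n$ is injective via a claimed Morita equivalence between $S(n,r)$ and $\Hec(r)$ is inaccurate (the Schur functor is not an equivalence in general); the correct and simpler reason is just that tensoring an injective polynomial module with a finite-dimensional module yields an injective polynomial module.
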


\begin{proof}  We have an embedding of $\nabla_n(\lambda_R)$ in $I_n(\lambda_R)$ and hence an embedding of $\nabla_n(\lambda_R)\otimes E_n$ in $I_n(\lambda_R)\otimes E_n$. 
Let $\mu=\lambda_R$.  By Pieri's formula the character of $M=\nabla_n(\mu)\otimes E_n$ is the sum $\sum_A \chi(\mu^A)$, with $A$ running over all addable nodes of $\mu$ with $\l(\mu^A)\leq n$.  Thus we have $\ch M=\sum_i \chi(\mu+\ep_i)$, where the sum is over all $1\leq i\leq n$ such that $(i,\mu_i+1)$ is an addable node of $\mu$.  Thus $M$ has a good filtration $0=M_{n+1}\leq M_{n}\leq \cdots \leq M_1=M$, where $M_i/M_{i+1}$ is $\nabla_n(\mu+\ep_i)$ if $(i,\mu_{i+1})$ is addable, and $0$ otherwise.  Let $J=M_h$. Then $M_h/M_{h+1}$ is $\nabla_n(\lambda)$ and $\Ext^1_G(M_h/M_{h+1},M_{h+1})=0$,  since $M_{h+1}$ has a filtration with sections   $\nabla(\mu+\ep_i)$, with $i>h$ and 
$$\Ext^1_G(M_h/M_{h+1},\nabla(\mu+\ep_i))=\Ext^1_G(\nabla(\lambda),\nabla(\mu+\ep_i))=0$$
since $\lambda$ and $\mu+\ep_i$ have different cores and so the modules $\nabla(\lambda)$ and $\nabla(\mu+\ep_i)$ lie in different blocks.

\q  Hence $\nabla_n(\lambda)$ embeds in $I_n(\lambda_R)\otimes E_n$ and $I_n(\lambda_R)\otimes E_n$  is injective so that $I_n(\lambda_R)\otimes E_n$ contains the injective module $I_n(\lambda)$. Moreover if $I_n(\lambda_R)$ is $m$-deficient then by Lemma 3.10 $I_n(\lambda_R)\otimes E_n$ is $m$-deficient and so too is  $I_n(\lambda)$. This proves (i) and (ii). The final assertion follows from (ii) and Lemma 3.10.

\end{proof}


\bigskip\bigskip
\bigskip\bigskip


\section{Distinguished partitions and some Mullineux combinatorics}

\q   We shall assume some familiarity with the terminology of the Mullineux bijection, as explained in \cite{Mull}.   This   applies to the case in which   $l$ is prime but the combinatorics is in fact valid for $l$ arbitrary. A suitable reference for the more general context is \cite{JB}. 

\q The  length of the edge of the diagram of a  partition $\lambda$ is denoted $e(\lambda)$.

\q The length of the  $l$-edge (i.e.,  the sum of the lengths of the $l$-segments) will be denoted $e_l(\lambda)$. Recall that  $\P_\reg$ is the set of all $l$-regular partitions.  We recall that the Mullineux involution $\Mull:\P_\reg\to \P_\reg$  is defined recursively.  For $\lambda\in \P_\reg$ we call $\Mull(\lambda)$ its  Mullineux conjugate. The Mullineux conjugate of the empty set is the empty set. If $\lambda\in \P_\reg$ is not empty and $\nu$ is the partition whose diagram is obtained by removing the $l$-edge from the diagram of $\lambda$ then $\Mull(\lambda)$ is the unique $l$-regular partition such that the removal of the $l$-edge from the diagram of  $\Mull(\lambda)$ leaves the diagram of $\Mull(\nu)$ and

\begin{equation*}
\l(\m(\lambda))=
\begin{cases} 
e_l(\lambda)-\l(\lambda), & \text{if $l\mid e_l(\lambda)$};\cr
e_l(\lambda)-\l(\lambda)+1,  &\text{if $l\nmid e_l(\lambda)$.}
\end{cases}
\end{equation*}

\q An easy induction shows that if $e(\lambda)<l$ then $\Mull(\lambda)=\lambda'$. We shall use this property several times in what follows, without further reference.

\quad For a partition $\lambda$ it will be sometimes convenient to use the notation $\lambda=a_1^{t_1}a_2^{t_2}\dots$  to indicate that the entry $a_1$ appears $t_1$-times, $a_2$ appears $t_2$-times and so on.

\begin{definition} Let $0<m<l$. We say that a partition $\lambda$ is {\it $m$-distinguished}  if  $\lambda$ has the form  $\lambda^0+l\barlambda$, with $\lambda^0=(l-m)^ka_1\ldots a_m$, with $k\geq 0$, $l-m>a_1\geq \cdots\geq a_m\geq 0$ and $\barlambda$ a partition with $\barlambda_1<m$.
\end{definition}

\q Our approach is to describe the composition factors of a tensor product of truncated symmetric powers in terms of the distinguished partitions.

\bs

\bf Notation \,\rm   Let $0< m< l$. We write  $\Phi_m$ to be the set of partitions $\lambda=(\lambda_1,\lambda_2,\ldots)$ such that $\l(\lambda)\leq m$ and $\lambda_1-\lambda_m \leq l-m$.  

\q We note that  a partition $\lambda$ belongs to $\Phi_m$ if and only if we can write $\lambda=r^m+\alpha$, for some $r\geq 0$ and a partition $\alpha$ with $\alpha_1\leq l-m$, $\l(\alpha)<m$.

\begin{remark} It is easy to see that if $\mu$ is a (non-zero) $l$-regular partition with edge length at most $l$ then $\mu\in \Phi_m$, where $m=\l(\mu)$.
\end{remark}

\bs 

Our interest in this set of partitions is explained by the following result.

\begin{lemma} Let $0<m<l$.  A restricted partition $\lambda$ is $m$-distinguished  if and only if $\lambda'\in \Phi_{l-m}$.
\end{lemma}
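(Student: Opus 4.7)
The plan is a direct combinatorial unfolding of the two conditions via the transpose. Since $\lambda$ is $l$-restricted, the canonical decomposition $\lambda = \lambda^0 + l\barlambda$ used in the definition of $m$-distinguished forces $\barlambda = 0$, so being $m$-distinguished reduces to saying
\[
\lambda = ((l-m)^k, a_1, a_2, \ldots, a_m), \qquad k \geq 0, \ l - m > a_1 \geq a_2 \geq \cdots \geq a_m \geq 0,
\]
with trailing zeros among the $a_i$ allowed. By the alternative characterisation of $\Phi_{l-m}$ noted just before the lemma, $\mu \in \Phi_{l-m}$ if and only if $\l(\mu) \leq l-m$ and $\mu_1 - \mu_{l-m} \leq m$.

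For the forward direction I would just compute $\lambda'$ for $\lambda$ in the displayed shape: the value $\lambda'_j = |\{i : \lambda_i \geq j\}|$ equals $k + |\{i : a_i \geq j\}|$ for $1 \leq j \leq l - m$ and vanishes for $j > l - m$. Hence $\l(\lambda') \leq l - m$, $\lambda'_{l-m} = k$, and $\lambda'_1 \leq k + m$, giving $\lambda'_1 - \lambda'_{l-m} \leq m$. Thus $\lambda' \in \Phi_{l-m}$.

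For the converse, given $\mu \in \Phi_{l-m}$ and setting $\lambda = \mu'$, the bound $\l(\mu) \leq l-m$ forces $\lambda_1 \leq l-m < l$, so every part of $\lambda$ lies in $[0, l-m]$ and $\lambda$ is automatically $l$-restricted. The number of parts of $\lambda$ equal to $l-m$ is $k := \mu_{l-m}$, while the number of strictly smaller nonzero parts of $\lambda$ is $\l(\lambda) - k = \mu_1 - \mu_{l-m} \leq m$. Padding with zeros if necessary brings $\lambda$ into the required shape $((l-m)^k, a_1, \ldots, a_m)$, and so $\lambda$ is $m$-distinguished.

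There is no real obstacle here beyond careful bookkeeping. The one point worth flagging is that $l$-restrictedness of $\lambda$ automatically pins down $\barlambda = 0$ in the decomposition appearing in the definition of $m$-distinguished, and that allowing trailing zeros among the $a_i$ is what makes the $m$ slots below the $(l-m)$-tail of $\lambda$ correspond exactly to the $m$ units of spread $\mu_1 - \mu_{l-m}$ permitted in $\mu$.
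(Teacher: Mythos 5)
Your proof is correct and takes essentially the same approach as the paper's: compute the transpose of a restricted $m$-distinguished partition (noting that restrictedness forces $\barlambda=0$) and match it against the defining conditions of $\Phi_{l-m}$, then reverse the bookkeeping for the converse. One small slip in terminology: you invoke the ``alternative characterisation'' of $\Phi_{l-m}$ but then actually write down the original defining inequalities $\l(\mu)\leq l-m$, $\mu_1-\mu_{l-m}\leq m$ rather than the $r^{l-m}+\alpha$ form noted before the lemma; this does not affect the argument.
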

\begin{proof} If $\lambda=(l-m)^ka_1\ldots a_m$ as above then  $\lambda=(l-m)^k\bigcup \nu$, where $\nu=a_1\ldots a_m$. Thus we have $\lambda'=((l-m)^k)'+ \nu'=k^{l-m}+\tau$, where $\tau=\nu'$, and this has the required form to qualify as an element of $\Phi_{l-m}$. Moreover the argument may be reversed and so the result holds.
\end{proof}

\begin{lemma}   Let $0<m<l$. If $\lambda\in \Phi_m$ then $e_l(\lambda)\leq l$.  
\end{lemma}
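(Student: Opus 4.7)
The plan is to establish the bound $e_l(\lambda)\leq l$ by direct analysis of the $l$-edge, using the decomposition $\lambda=r^m+\alpha$ (with $r\geq 0$, $\alpha_1\leq l-m$ and $\ell(\alpha)<m$) recorded just before the lemma. I would split into the cases $r=0$ and $r\geq 1$.

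In the easy case $r=0$ we have $\lambda=\alpha$ with $\ell(\lambda)\leq m-1$ and $\lambda_1\leq l-m$, so the length of the edge is
$$e(\lambda)=\lambda_1+\ell(\lambda)-1\leq (l-m)+(m-1)-1=l-2<l,$$
and since the $l$-edge sits inside the edge this already gives $e_l(\lambda)<l$; in fact the whole edge forms a single $l$-segment of length $<l$, and the earlier observation that $e(\lambda)<l$ implies $\Mull(\lambda)=\lambda'$ applies here.

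For $r\geq 1$ we have $\ell(\lambda)=m$ and $\lambda_1-\lambda_m=\alpha_1-\alpha_m\leq l-m$. I would compute the first $l$-segment explicitly by walking along the rim from $(1,\lambda_1)$ down and to the left: one picks up the $\lambda_i-\lambda_{i+1}+1$ rim nodes of row $i$ for each $i=1,\ldots,m-1$, and then drops into row $m$ at the node $(m,\lambda_m)$, giving the running total
$$\sum_{i=1}^{m-1}(\lambda_i-\lambda_{i+1}+1)+1=\lambda_1-\lambda_m+m\leq l.$$
The segment then either terminates at this point or continues leftwards along row $m$ until it has exactly $l$ nodes, so in any case its length is at most $l$.

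It remains to check that this single $l$-segment already exhausts the $l$-edge of $\lambda$, i.e., that the recursive construction of the $l$-edge contributes no further $l$-segments. The $\Phi_m$ inequality $\lambda_1-\lambda_m\leq l-m$ is essential here: it forces the first segment to sweep all the way through row $m$ of $\lambda$, so the Mullineux extraction terminates at this step. The main obstacle will be exactly this last verification, which requires a careful reading of the $l$-edge definition from Section 4; granted this, the bound $e_l(\lambda)\leq l$ follows from the direct calculation above.
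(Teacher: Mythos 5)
Your proof is correct and follows essentially the same line of reasoning as the paper's: in both, the crucial computation is that the number of rim nodes accumulated upon reaching row $m$ is $\lambda_1-\lambda_m+m\le l$, so the first $l$-segment sweeps into the last row of $\lambda$ and the $l$-edge therefore consists of a single segment. The only difference is presentational — the paper handles $r=1$ directly and then reduces $r>1$ to that case by an induction, whereas you dispatch all $r\ge 1$ at once with the same rim count; your hedged final paragraph is in fact the complete argument, since reaching the last row leaves no rows below from which a second $l$-segment could be extracted.
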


\begin{proof} We  write $\lambda=r^m+\alpha$ as above. If $r=0$ then we have $e(\lambda)=\alpha_1+\l(\alpha)-1<m+l-m-1<l$. If $r=1$ then we have $e(\lambda)=m+\alpha_1+1-1\leq m+l-m+1-1=l$. 

\q Now suppose $r>1$. Then we have $\lambda=(r-1)^m+\mu$, where $\mu=1^m+\alpha$. By the case just considered we have $e(\mu)\leq l$ so that the $l$-edge of $\mu$ has length at most $l$ and contains the node $(m,1)$ and hence the first $l$-segment of  $\lambda$ contains the node $(m,r)$. In particular the first $l$-segment contains a node from the final row of $\lambda$ and so there is only one $l$-segment, i.e., $e_l(\lambda)\leq l$.
\end{proof}

\begin{lemma} Let $0<m<l$. Let $\lambda\in \Phi_m$ and let $\mu$ denote the partition obtained by removing the $l$-edge of $\lambda$. Then we have $\mu\in\Phi_m$.
\end{lemma}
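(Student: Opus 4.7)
My plan is to use the explicit parametrisation $\lambda=r^m+\alpha$ with $r\geq 0$, $\alpha_1\leq l-m$, $\l(\alpha)<m$, recorded just before Lemma~4.5. The proof of Lemma~4.5 shows that for such a $\lambda$ the $l$-edge is a single $l$-segment which starts at $(1,\lambda_1)$ and reaches the bottom row of $\lambda$; hence the $l$-edge is precisely the topmost $\min(l,e(\lambda))$ nodes of the rim, where $e(\lambda)=r+\alpha_1+m-1$ when $r\geq 1$. This reduces the statement to a direct row-by-row computation of $\mu$, followed by a check of the two defining inequalities $\l(\mu)\leq m$ and $\mu_1-\mu_m\leq l-m$.

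First I would tally the rim. For $r\geq 1$ the rim contributes $\lambda_i-\lambda_{i+1}+1$ nodes in each row $i<m$ and $r$ nodes in row $m$, so rows $1,\ldots,m-1$ together carry $\alpha_1+m-1$ rim nodes, an amount which is at most $l-1$ thanks to $\alpha_1\leq l-m$. I would then split into two cases: Case~1, where $e(\lambda)\leq l$ and the whole rim is removed; and Case~2, where $e(\lambda)>l$ and exactly $l$ nodes are removed, namely every rim node of rows $1,\ldots,m-1$ together with the rightmost $l-\alpha_1-m+1$ nodes of row $m$.

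In both cases the outcome is $\mu_i=\lambda_{i+1}-1$ for $i<m$, with $\mu_m=0$ in Case~1 and $\mu_m=e(\lambda)-l\geq 1$ in Case~2. The checks are then routine: in Case~1 one has $\l(\mu)<m$ and $\mu_1=r+\alpha_2-1\leq l-m$ from $r+\alpha_1\leq l-m+1$; in Case~2 one has $\l(\mu)=m$ and $\mu_1-\mu_m=\alpha_2-\alpha_1+l-m\leq l-m$ from $\alpha_2\leq\alpha_1$. The degenerate subcase $r=0$ falls under Case~1, since there $e(\lambda)<l$ automatically.

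The main point to watch is Case~2, where I must separately confirm that the prescribed shape is actually a Young diagram, i.e.\ that $\mu_{m-1}=r-1\geq\mu_m$; this amounts to $\alpha_1+m\leq l$. Both this well-definedness condition and the key bound $\mu_1-\mu_m\leq l-m$ ultimately turn on the single slack inequality $\alpha_1\leq l-m$ built into membership of $\Phi_m$, so once that slack is exposed the verification closes immediately.
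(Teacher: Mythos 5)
Your proposal is correct, and it takes a genuinely different route from the paper's. Both proofs start from the parametrisation $\lambda=r^m+\alpha$ and both rely on the structural fact (established in the proof of Lemma~4.4, which you presumably meant when you cited ``Lemma~4.5'') that the $l$-edge of a partition in $\Phi_m$ is a single $l$-segment reaching the last row, hence consists of the first $\min(l,e(\lambda))$ rim nodes. After that the two arguments diverge: for the hard case $e(\lambda)>l$ the paper writes $\lambda=1^m+\nu$ with $\nu\in\Phi_m$, observes $(m,1)\notin\E_l(\lambda)$ so that $\mu=1^m+\barnu$, and closes by induction on degree; you instead compute $\mu$ explicitly row by row, getting $\mu_i=\lambda_{i+1}-1$ for $i<m$ and $\mu_m=e(\lambda)-l$, and verify the two $\Phi_m$ inequalities directly. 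Your version is non-inductive and makes visible exactly where the slack $\alpha_1\leq l-m$ enters (both in the well-definedness check $\mu_{m-1}\geq\mu_m$ and in the bound $\mu_1-\mu_m\leq l-m$), which is a pleasant gain in transparency; the paper's version is shorter on the page because the induction hides the bookkeeping. Two small things worth tidying: in the subcase $r=0$ the stated formula should read $\mu_i=\max(0,\lambda_{i+1}-1)$ since $\l(\lambda)$ may be less than $m$ (the conclusion $\l(\mu)<m$, $\mu_1\leq l-m$ is unaffected); and the identity $\mu_1-\mu_m=\alpha_2-\alpha_1+l-m$ in Case~2 presumes $m\geq 2$, with $m=1$ being trivial since then $\mu_1-\mu_m=0$.
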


\begin{proof}  We write $\lambda$ in the form $r^m+\alpha$, as above. If $r=0$ then $e_l(\lambda)=e(\lambda)<l$ and $\mu$ is obtained by removing the entire edge of $\lambda$. The result is clear in this case.  Suppose now that $r>0$ but $e(\lambda)\leq l$.  Then $\lambda_1+m-1\leq l$ so that $\lambda_1\leq l-m+1$.  Since we remove the node $(1,\lambda_1)$ in obtaining $\mu$ we must have $\mu_1\leq l-m$.  Also, we remove  the entire final row of $\lambda$ in obtaining  $\mu$ so we must have $\l(\mu)<m$.  But now $\l(\mu)<m$ and $\mu_1\leq l-m$ gives $\mu\in \Phi_m$.

\q Now suppose $r>0$, $e(\lambda)>  l$ and so, by Lemma 4.4, $e_l(\lambda)=l$. Therefore the node $(m,1)$ does not belong to the $l$-edge of $\lambda$.  Thus we may write $\lambda=1^m+\nu$,  with $\nu\in \Phi_m$ and $\mu=1^m+\barnu$, where $\barnu$ is obtained by removing the $l$-edge from $\nu$.  We may assume inductively that $\barnu\in \Phi_m$ and hence $\mu=1^m+\barnu\in \Phi_m$.
\end{proof}

\begin{proposition} Let $0<m<l$.  The Mullineux correspondence restricts to a bijection $\Phi_m\to \Phi_{l-m}$.
\end{proposition}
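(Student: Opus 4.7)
The plan is to show the containment $\Mull(\Phi_m)\subseteq\Phi_{l-m}$. Combined with the symmetric statement (obtained by swapping the roles of $m$ and $l-m$) and the involutive property of $\Mull$ on $l$-regular partitions, this yields that $\Mull$ restricts to mutually inverse bijections between $\Phi_m$ and $\Phi_{l-m}$. I first observe that every $\lambda\in\Phi_m$ is automatically $l$-regular, since $\l(\lambda)\leq m<l$ rules out $l$ equal nonzero parts; in particular $\Mull(\lambda)$ is defined.

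I would proceed by induction on $\deg(\lambda)$. The base case $\lambda=\emptyset$ is trivial. For the inductive step, let $\lambda\in\Phi_m$ be nonempty and let $\nu$ be the partition obtained from $\lambda$ by removing the $l$-edge. By Lemma 4.5, $\nu\in\Phi_m$, so by induction $\mu:=\Mull(\nu)\in\Phi_{l-m}$. By the recursive definition of Mullineux, $\mu':=\Mull(\lambda)$ is the unique $l$-regular partition whose $l$-edge removal returns $\mu$ and whose length equals $e_l(\lambda)-\l(\lambda)+\epsilon$, where $\epsilon\in\{0,1\}$ is determined by whether $l$ divides $e_l(\lambda)$.

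The length bound $\l(\mu')\leq l-m$ follows from a case split on the decomposition $\lambda=r^m+\alpha$. If $r=0$ then $e(\lambda)<l$, hence $\Mull(\lambda)=\lambda'=\alpha'$ and $\l(\alpha')=\alpha_1\leq l-m$. If $r\geq 1$ then $\l(\lambda)=m$, and combining $e_l(\lambda)\leq l$ (Lemma 4.4) with the length formula gives $\l(\mu')\leq l-m$ in both divisibility subcases (when $l\mid e_l(\lambda)$ one has $e_l(\lambda)=l$ and $\l(\mu')=l-m$, and otherwise $e_l(\lambda)\leq l-1$ and $\l(\mu')\leq l-m$).

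The main obstacle is verifying the difference inequality $\mu'_1-\mu'_{l-m}\leq m$ (with the convention $\mu'_{l-m}=0$ when $\l(\mu')<l-m$). The case $r=0$ is immediate since $\mu'_1=\alpha'_1=\l(\alpha)\leq m-1$. For $r\geq 1$ the plan is to analyse carefully how the Mullineux algorithm adds the $l$-edge to $\mu$ to produce $\mu'$, tracking the simultaneous increments to the first and $(l-m)$-th rows and invoking the inductive bound $\mu_1-\mu_{l-m}\leq m$. An alternative that may be cleaner is to iterate on $r$ via the decomposition $\lambda=1^m+\bar{\lambda}$ with $\bar{\lambda}=(r-1)^m+\alpha\in\Phi_m$, and to establish a direct formula relating $\Mull(\lambda)$ to $\Mull(\bar{\lambda})$ under the operation $\bar{\lambda}\mapsto 1^m+\bar{\lambda}$; the combinatorial bookkeeping required here is the technical heart of the argument.
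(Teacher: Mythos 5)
Your overall strategy coincides with the paper's: show $\Mull(\Phi_m)\subseteq\Phi_{l-m}$ and appeal to the involution, induct on degree via $l$-edge removal using Lemma 4.5, and split cases on $r$ in the decomposition $\lambda=r^m+\alpha$ (which in turn controls $e(\lambda)$ and whether $\Mull(\lambda)=\lambda'$). Your treatment of the easy cases ($r=0$; $r\geq 1$ with $e(\lambda)<l$) and the length bound $\l(\Mull(\lambda))\leq l-m$ are all essentially as in the paper and correct.

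However, there is a genuine gap: you never actually establish the difference inequality $\mu'_1-\mu'_{l-m}\leq m$ in the hard case $r\geq 1$, $e(\lambda)\geq l$. You explicitly defer it (``the combinatorial bookkeeping required here is the technical heart of the argument''), so the proposal stops exactly where the real work begins. The paper closes this by a further case split on $\l(\theta)$, where $\theta=\Mull(\barlambda)$ and $\barlambda$ is the $l$-edge-removal of $\lambda$. When $\l(\theta)<l-m$, one observes that the $l$-edge of $\mu=\Mull(\lambda)$ must be the full edge (removing it strictly decreases the number of rows), so $l=\mu_1+(l-m)-1$ forces $\mu_1=m+1$; since $\l(\mu)=l-m$ gives $\mu_{l-m}\geq 1$, the bound $\mu_1-\mu_{l-m}\leq m$ drops out. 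When $\l(\theta)=l-m$, one writes $\theta=t^{l-m}+\phi$ with $t=\theta_{l-m}$, and then $\mu=t^{l-m}+\psi$ where $\psi$ has length $l-m$ and its $l$-edge removal gives $\phi$; an inductive appeal to $\psi\in\Phi_{l-m}$ finishes, since $\Phi_{l-m}$ is stable under adding the rectangle $t^{l-m}$. Without this (or an equivalent) analysis, the argument is incomplete: the bound on $\mu'_1-\mu'_{l-m}$ does not follow formally from the inductive hypothesis on $\theta$ together with the length formula alone, because the $l$-edge that the Mullineux recursion adds to $\theta$ to produce $\mu'$ can redistribute mass between the first and last rows in ways you haven't constrained.

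Your alternative sketch (relating $\Mull(1^m+\barlambda)$ to $\Mull(\barlambda)$ directly) could plausibly be pushed through, but as stated it is no more than a plan; you have not exhibited the explicit formula nor verified it preserves the $\Phi_{l-m}$ constraints, and there is no reason to expect it to be simpler than the paper's sub-case split.
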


\begin{proof} It suffice to show that $\Mull(\Phi_m)\subseteq \Phi_{l-m}$ since, replacing $m$ by $l-m$, we then get $\Mull(\Phi_{l-m})\subseteq \Mull(\Phi_m)$.  

\q Let $\lambda\in \Phi_m$  and let $\mu=\Mull(\lambda)$. We   write $\lambda=r^m+\alpha$, with $\l(\alpha)<m$, $\alpha_1\leq l-m$. 

\q First suppose that $r=0$. Then $e(\lambda)=\alpha_1+\l(\alpha)-1\leq l-m+m-1-1<l$ so that $\mu=\lambda'=\alpha'\in \Phi_{l-m}$.

\q Next suppose that $r>0$ but $e(\lambda)<l$.  Then $m+\lambda_1-1<l$ and $\mu=\lambda'$ so that $\mu_1=\l(\lambda)=m$ and $\l(\mu)=\lambda_1 \leq l-m$ so $\mu\in \Phi_{l-m}$. 

\q Now suppose that $r>0$, $e(\lambda)\geq l$ so that $e_l(\lambda)=l$.  Let $\barlambda$ be the partition obtained by removing  the $l$-edge from $\lambda$ and let $\theta=\Mull(\barlambda)$.  By Lemma 4.5  we have $\barlambda\in \Phi_m$ so  we can assume by induction that $\theta\in \Phi_{l-m}$, in particular $\l(\theta) \leq l-m$. 

\q We first consider the case in which $\l(\theta)<l-m$.  The $l$-edge of $\mu$ is the edge so that $l=\mu_1+l-m-1$, i.e., $\mu_1=m+1$.  We have $\l(\mu)=l-\l(\lambda)=l-m$ so that $\mu_{l-m}>0$ and $\mu_1-\mu_{l-m}\leq m$ so that that $\mu\in \Phi_{l-m}$.

\q It remains to consider the case $\l(\theta)=l-m$.  Then we may write $\theta=t^{l-m}+\phi$, with $t=\theta_{l-m}$ and we have that $\mu=t^{l-m}+\psi$, where $\psi$ is the 
partition with $\l(\psi)=l-m$ and such that the removal of the $l$-edge of $\psi$ leaves $\phi$.  We can assume inductively that $\psi\in \Phi_{l-m}$ so that $\mu=t^{l-m}+\psi\in \Phi_{l-m}$.
\end{proof}

\begin{corollary}  An  $l$-restricted  $m$-distinguished partition is $m$-special.

\end{corollary}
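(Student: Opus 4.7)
The plan is to chain together three previously established results: Lemma 4.3 (identifying restricted $m$-distinguished partitions via $\Phi_{l-m}$), Proposition 4.6 (the Mullineux bijection $\Phi_m \to \Phi_{l-m}$), and Proposition 3.2 (characterising $m$-special restricted partitions by the length of their Mullineux image).

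First, let $\lambda$ be an $l$-restricted $m$-distinguished partition. By Lemma 4.3, the transpose $\lambda'$ belongs to $\Phi_{l-m}$.

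Next, by Proposition 4.6, the Mullineux map restricts to a bijection $\Mull : \Phi_m \to \Phi_{l-m}$. Since $\Mull$ is an involution on $\P_\reg$, its inverse is itself, and hence it also restricts to a bijection $\Phi_{l-m} \to \Phi_m$. Applying this to $\lambda' \in \Phi_{l-m}$ yields $\Mull(\lambda') \in \Phi_m$. By the definition of $\Phi_m$ we therefore have $\l(\Mull(\lambda')) \leq m$.

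Finally, since $\lambda$ is restricted, Proposition 3.2 (the equivalence of (ii) and (iii)) applies: the inequality $\l(\Mull(\lambda')) \leq m$ forces $\lambda$ to be $m$-special, which is the desired conclusion.

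There is essentially no obstacle beyond correctly invoking the earlier results; all the work has been done in establishing Lemma 4.3, Proposition 4.6, and Proposition 3.2. The only small point worth flagging is the use of the involution property of $\Mull$ to pass from $\Mull(\Phi_m) \subseteq \Phi_{l-m}$ (which is how Proposition 4.6 is phrased in the proof) to $\Mull(\Phi_{l-m}) \subseteq \Phi_m$, but this is immediate from the bijectivity already proved.
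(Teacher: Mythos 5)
Your proposal is correct and follows exactly the same chain of reasoning as the paper's proof: Lemma 4.3 to get $\lambda' \in \Phi_{l-m}$, Proposition 4.6 to conclude $\Mull(\lambda') \in \Phi_m$ and hence $\l(\Mull(\lambda')) \leq m$, and Proposition 3.2 to deduce that $\lambda$ is $m$-special. Your brief aside about using the involution property of $\Mull$ to pass between $\Phi_m$ and $\Phi_{l-m}$ is a correct and harmless elaboration of a point that Proposition 4.6 already handles by symmetry in $m \leftrightarrow l-m$.
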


\begin{proof} Let $\lambda$ be a restricted $m$-distinguished partition. The we have $\lambda'\in \Phi_{l-m}$ by Lemma 4.3.  Hence we have $\Mull(\lambda')\in \Phi_m$, by Proposition 4.6  and hence $\l(\Mull(\lambda'))\leq m$ and so $\lambda$ is $m$-special by Proposition 3.2.

\end{proof}

\q We shall  prove a generalisation of this.

\begin{definition} Let now $\mu\in \P_\reg$. The sequence of Mullineux components $\mu^1,\mu^2,\dots$ of $\mu$ is  defined as follows. Suppose that the first $l$-segment of $\mu$ ends in the row $r_1$, the second in $r_2$ etc. Then  $\mu^1=(\mu_1,\dots,\mu_{r_1})$, $\mu^2=(\mu_{r_1+1},\dots,\mu_{r_2})$ etc.

\end{definition}

\q Note that, in the above situation, we have $\mu=\mu^1\bigcup \cdots \bigcup \mu^t$.  

\q We shall also develop an alternative notion, which will be useful in Section 5, to express $\mu$ in terms of its Mullineux components. 

\q  Let $\alpha,\rho$ be partitions with $\alpha\neq 0$. We shall say that the pair $(\alpha,\rho)$ is compatible if $\alpha_h\geq \rho_1$,where $h$ is the length of $\alpha$. If $(\alpha,\rho)$ is compatible we write $(\alpha|\rho)$ for the concatenation $(\alpha_1,\ldots,\alpha_h,\rho_1,\rho_2,\ldots)$.  For $k\geq 2$ and partitions    $\alpha^1,\alpha^2,\ldots,\alpha^{k+1}$,  such that  $\alpha^1,\ldots,\alpha^k\neq 0$ and such that the pair $(\alpha_i,\alpha_{i+1})$ is compatible for $1\leq i\leq k$  the concatenation $(\alpha^1|\alpha^2|\cdots|\alpha^{k+1})$ is defined  recursively by $(\alpha^1|\alpha^2|\cdots|\alpha^{k+1})=(\alpha^1| (\alpha^2|\cdots|\alpha^{k+1})).$

\q Thus, in particular, if $\mu$ is a $l$-regular partition with Mullineux components $\mu^1,\ldots,\mu^t$, as above, we have $\mu=(\mu^1|\mu^2|\cdots|\mu^t)$.

\q We note that the notion of Mullineux components easily extends to arbitrary partitions. Thus, for an  arbitrary partition $\lambda$ we write $\lambda=(\alpha|\rho)$, for partitions $\alpha,\rho$, where the first $l$-segment  of $\lambda$ has final node in the last row of $\alpha$, i.e., we have $\alpha=\lambda$ if $e_l(\lambda)\leq l$ and if $e_l(\lambda)\geq l$ then $\alpha=(\lambda_1,\ldots,\lambda_h)$, $\rho=(\lambda_{h+1},\ldots)$, where $h$ is minimal  such that $\lambda_1-\lambda_{h+1}+h\geq l$.  We then say that $\alpha$ is the first Mullineux component of $\lambda$ and say that the second  Mullineux component of $\lambda$ is the first Mullineux component of $\rho$, and so on. We shall use these components, in the general context,  in Section 5.

\begin{lemma}
Let $\mu\in \P_\reg$ have  Mullineux components $\mu^1,\mu^2,\dots,\mu^t$.  Then 
$$\l(\m(\mu))=\l(\m(\mu^1))+\dots+\l(\m(\mu^t)).$$

\end{lemma}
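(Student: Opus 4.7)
The plan is to argue by induction on the number $t$ of Mullineux components. The base case $t=1$ (with $\mu=\mu^1$) is trivial. For $t\geq 2$, I will write $\mu=(\mu^1\,|\,\mu')$, where $\mu'=(\mu^2\,|\,\cdots\,|\,\mu^t)$ is the partition obtained by deleting the first $r_1$ rows of $\mu$; since $\mu\in\P_\reg$ we also have $\mu'\in\P_\reg$, and the Mullineux components of $\mu'$ are $\mu^2,\ldots,\mu^t$ (in particular one fewer than those of $\mu$). Applying the induction hypothesis to $\mu'$ will then reduce the lemma to the two-piece identity
$$\l(\Mull(\mu))=\l(\Mull(\mu^1))+\l(\Mull(\mu')).$$

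To establish this two-piece identity, I will apply the closed length formula $\l(\Mull(\sigma))=e_l(\sigma)-\l(\sigma)+\ep(\sigma)$ (with $\ep(\sigma)=0$ if $l\mid e_l(\sigma)$ and $\ep(\sigma)=1$ otherwise) to each of $\sigma=\mu,\mu^1,\mu'$. Since the Mullineux components partition the rows of $\mu$, the length function is trivially additive, $\l(\mu)=\l(\mu^1)+\l(\mu')$, so the displayed identity becomes equivalent to the purely combinatorial additivity
$$e_l(\mu)+\ep(\mu)=\bigl[e_l(\mu^1)+\ep(\mu^1)\bigr]+\bigl[e_l(\mu')+\ep(\mu')\bigr].$$

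The heart of the proof will be to verify this additivity. By the definition of Mullineux components, the first $l$-segment of $\mu$ lies entirely in the top $r_1$ rows, that is, inside $\mu^1$, and the remaining $l$-segments of $\mu$ are, after the obvious row reindexing, precisely the $l$-segments of $\mu'$. Hence the $l$-edge of $\mu$ coincides with the union of the $l$-edge of $\mu^1$ and the $l$-edge of $\mu'$ (each computed within the stand-alone partition), except possibly at the interface: some cells of row $r_1$ that are on the rim of $\mu^1$ become interior in $\mu$ due to the rows of $\mu'$ beneath. Counting these interface cells and tracking divisibility modulo $l$, the $+\ep$ correction should exactly compensate for the resulting discrepancy in $e_l$, yielding the required additivity.

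The main obstacle will be the interface bookkeeping: I need to match the number of cells reclassified from rim to interior against the shift in the mod-$l$ residue of $e_l$, and this depends on the detailed combinatorics of $l$-segments as set out in \cite{Mull} and \cite{JB}. A case analysis according to how $\mu_{r_1}$ and $\mu_{r_1+1}$ compare modulo $l$, together with whether the last $l$-segment contained in $\mu^1$ is of full length $l$ or shorter, should handle all situations and complete the induction.
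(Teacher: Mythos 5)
Your overall strategy (apply the closed length formula $\l(\Mull(\sigma))=e_l(\sigma)-\l(\sigma)+\ep(\sigma)$ to each piece and prove additivity of $e_l+\ep$) is exactly what the paper does, though the paper skips the induction and handles all $t$ components at once. The genuine gap is in the ``interface bookkeeping'' step: you anticipate that rim cells of $\mu^1$ in its last row which are interior in $\mu$ will create a discrepancy in $e_l$, and that the $\ep$ terms must compensate. In fact there is no discrepancy at all, and the reason is a definitional observation that you have not pinned down. Each Mullineux component $\mu^i$, viewed as a stand-alone partition, has exactly one $l$-segment, and it coincides with the $i$-th $l$-segment of $\mu$ (after the obvious row shift). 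To see this for $\mu^1$: the first $l$-segment of $\mu$ consists of the first $l$ rim cells of $\mu$, ending at some node $(r_1,c)$ with $c\geq\mu_{r_1+1}+1$, and the rim of $\mu$ through rows $1,\ldots,r_1$ is an initial piece of the rim of $\mu^1$ (since the rim of $\mu^1$ agrees with that of $\mu$ in rows $1,\ldots,r_1-1$ and extends it in row $r_1$). Hence the first $l$-segment of $\mu^1$ is literally the same set of cells as the first $l$-segment of $\mu$. That segment ends in the last row of $\mu^1$, so $\mu^1$ has no further $l$-segments. The extra rim cells of $\mu^1$ in row $r_1$ that you worried about therefore never belong to the $l$-edge of $\mu^1$.

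Once this is in place the bookkeeping is immediate, with no case analysis needed beyond the single dichotomy $l\mid e_l(\mu^t)$ or not. One gets $e_l(\mu)=\sum_{i=1}^t e_l(\mu^i)$ on the nose, and for $i<t$ the $i$-th $l$-segment is full, so $e_l(\mu^i)=l$, giving $\ep(\mu^i)=0$. Consequently $e_l(\mu)\equiv e_l(\mu^t)\pmod l$, so $\ep(\mu)=\ep(\mu^t)$, and the identity
\[
e_l(\mu)-\l(\mu)+\ep(\mu)=\sum_{i=1}^{t}\bigl(e_l(\mu^i)-\l(\mu^i)+\ep(\mu^i)\bigr)
\]
follows at once. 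This is precisely what the published proof does: it observes $l\mid e_l(\mu)$ iff $l\mid e_l(\mu^t)$ and, in the case $l\nmid e_l(\mu)$, peels off the last summand to absorb the $+1$, tacitly using that the $+1$ never appears in the terms $i<t$. So your proposal is on the right route but stops short at exactly the point where the one needed observation lives; without it, the claimed ``compensation via $\ep$'' is vacuous, since there is nothing to compensate for.
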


\begin{proof}
We have that $l\mid e_l(\mu)$ if and only if  $l\mid e_l(\mu^t)$. Therefore in  the case   $l\mid e_l(\mu)$  we get,
\begin{align*}
 \l(\m(\mu))=& e_l(\mu)-\l(\mu)=\sum^t_{i=1} e_l(\mu^i)-\sum^t_{i=1}\l(\mu^i)\cr
 =&\sum^t_{i=1}( e_l(\mu^i)-\l(\mu^i))=\sum^t_{i=1} \l(\m(\mu^i)).
 \end{align*}
For the case in which  $l\nmid e_l(\mu)$ and so $l\nmid e_l(\mu^t)$ we get 
\begin{align*}
 \l(\m(\mu))&= e_l(\mu)-\l(\mu)+1=\sum^t_{i=1} e_l(\mu^i)-\sum^t_{i=1}\l(\mu^i)+1\cr
 &=\sum^{t-1}_{i=1}( e_l(\mu^i)-\l(\mu^i))+( e_l(\mu^t)-\l(\mu^t)+1)\cr
 &=\sum^t_{i=1} \l(\m(\mu^i)).
 \end{align*}
\end{proof}

\begin{proposition}  Let $\lambda$ be a $l$-restricted partition and $m$ be a positive integer.  Then $\lambda$ is $m$-special if and only if it is possible to write
$$\lambda=\lambda^1+\cdots+\lambda^t$$
where $\lambda^i$ is a restricted   $m_i$-distinguished partition,  for $1\leq i\leq t$ and $m=m_1+\cdots+m_t$.

\end{proposition}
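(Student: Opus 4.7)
The backward direction will be routine: given such a decomposition, each $\lambda^i$ is $m_i$-special by Corollary 4.7, and iterated application of Lemma 2.4 (in the $m$-special form) will yield that $\lambda$ is $(m_1+\cdots+m_t)$-special, i.e., $m$-special.

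For the forward direction, I plan to start from Proposition 3.2, which under the hypothesis gives $\l(\Mull(\lambda')) \leq m$.  Setting $\mu = \lambda'$ (which is $l$-regular), I will decompose $\mu$ into its Mullineux components $\mu = (\mu^1 \vert \cdots \vert \mu^t)$.  Since vertical concatenation of Young diagrams corresponds, after transposing, to entrywise addition of partitions, this gives
\[
\lambda = (\mu^1)' + (\mu^2)' + \cdots + (\mu^t)'.
\]
I will then set $\lambda^i = (\mu^i)'$ and $m_i = \l(\Mull(\mu^i))$.  Each $\lambda^i$ is restricted since $\mu^i$ is $l$-regular, and Lemma 4.9 supplies $\sum_i m_i = \l(\Mull(\mu)) \leq m$.

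The technical heart of the argument will be to verify that each $\lambda^i$ is $m_i$-distinguished with $0 < m_i < l$; by Lemma 4.3 this reduces to showing $\mu^i \in \Phi_{l-m_i}$.  I expect a case split on $e_l(\mu^i)$.  For $i<t$, and also for $i=t$ when $e_l(\mu^t) = l$, the Mullineux recursion gives $m_i = l - \l(\mu^i)$, and Remark 4.2 immediately places $\mu^i \in \Phi_{\l(\mu^i)} = \Phi_{l - m_i}$; ruling out $\mu^i = (1^l)$ by $l$-regularity will yield $m_i \geq 1$.  For $i = t$ with $e_l(\mu^t) < l$, the single-segment case gives $\Mull(\mu^t) = (\mu^t)'$, hence $m_t = (\mu^t)_1$, and the edge bound $\l(\mu^t) + (\mu^t)_1 - 1 < l$ will place $\mu^t$ in $\Phi_{l - m_t}$ directly.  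In every case $0 < m_i < l$, so Lemma 4.3 produces the required $m_i$-distinguished partition $\lambda^i$.

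Finally, if $\sum_i m_i < m$, I will pad the decomposition by appending $m - \sum_i m_i$ copies of the zero partition, each declared $1$-distinguished (legitimate since the zero partition fits the definition with $k = 0$, $a_1 = 0$, $\bar\lambda = 0$, and $l \geq 2$); the sum is unchanged and $\sum_i m_i$ becomes $m$.  The principal obstacle of the argument will be the case analysis establishing $\mu^i \in \Phi_{l - m_i}$ with a strictly positive $m_i$; the remainder is bookkeeping involving transposition, the Mullineux recursion, and the structural results Proposition 3.2 and Lemma 4.9.
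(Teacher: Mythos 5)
Your argument follows essentially the same route as the paper's: invoke Proposition~3.2 to bound $\l(\Mull(\lambda'))$, pass to $\mu=\lambda'$, split $\mu$ into its Mullineux components, use Lemma~4.9 for additivity of $\l(\Mull(\cdot))$, transpose to obtain the sum decomposition, and then show each $(\mu^i)'$ is distinguished by verifying $\mu^i\in\Phi_{l-m_i}$ and applying Lemma~4.3. The sufficiency direction, the padding by $1$-distinguished zero partitions, and the case $i=t$ with $e_l(\mu^t)<l$ are all fine.

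However, the appeal to Remark~4.2 in your Case~1 does not go through as stated. Remark~4.2 requires the \emph{ordinary} edge length $e(\mu^i)\leq l$, whereas a Mullineux component $\mu^i$ (as a standalone partition) only satisfies $e_l(\mu^i)\leq l$; the two can differ. For a concrete failure, take $l=5$ and $\mu=(5,5,5,1,1,1)$: the first Mullineux component is $\mu^1=(5,5,5)$, which has $e(\mu^1)=\mu^1_1+\l(\mu^1)-1=7>l$ even though $e_l(\mu^1)=5$. So Remark~4.2 cannot be cited to place $\mu^1$ in $\Phi_{\l(\mu^1)}$. The needed membership is true, but it has to be derived from the structure of a Mullineux component rather than from Remark~4.2: if the single $l$-segment of $\mu^i$ ends at node $(h,k)$ with $h=\l(\mu^i)=l-m_i$, then (as in the paper's direct computation) $(\mu^i)_1=k+m_i$, whence $(\mu^i)_1-(\mu^i)_h\leq m_i=l-\l(\mu^i)$; equivalently, one can use the minimality of $h$ in the defining inequality $\mu_1-\mu_{h+1}+h\geq l$, which yields $\mu_1-\mu_h\leq l-h$ directly. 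Replace the citation of Remark~4.2 by this observation and the proof is complete.
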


\begin{proof} Certainly any such a partition is $m$-special, by Corollary 4.7 and Lemma 2.4.

\q We now suppose  that $\lambda$ is $l$-restricted and $m$-special and show that it has  the required form. Thus $\l(\Mull(\lambda'))\leq m$ and it is clearly harmless to assume $\l(\Mull(\lambda'))=m$ (which we do). 

\q We write $\mu=\lambda'$ and consider the sequence of Mullineux components $\mu^1,\ldots,\mu^t$ of  $\mu$.  We define $m_i=\l(\Mull(\mu^i))$, $1\leq i\leq t$. Then we have $m=m_1+\cdots+m_t$, by Lemma 4.9. Moreover, we have
\begin{align*}\lambda&=\mu'=(\mu^1\bigcup\cdots \bigcup \mu^t)'\cr
&=\lambda^1+\cdots+\lambda^t
\end{align*}
where $\lambda^i=(\mu^i)'$, $1\leq i\leq t$.  Thus it suffices to prove that $\lambda^i$ is $m_i$-distinguished and so, by Lemma 4.3, it suffices to prove that $\mu^i\in \Phi_{l-m_i}$, $1\leq i\leq t$.

\q Suppose first that $1\leq i\leq t$ and $e_l(\mu^i)=l$.  (This is the case if $i<t$.)   We have $\l(\Mull(\mu^i))=l-\l(\mu^i)$ so that $\l(\mu^i)=l-m_i$.  Suppose that the $l$-edge of $\mu^i$ ends at the node $(l-m_i,k)$ then (by considering the diagram obtained by removing the first $k-1$ columns from the diagram of $\mu^i$) we see that the length of the $l$-edge of $\mu^i$ is
$$(\mu^i)_1-(k-1)+l-m_i-1$$
so that $(\mu^i)_1=k+m_i$ and $(\mu^i)_1-\mu_{l-m_i}\leq m_i$ and $\mu^i\in \Phi_{l-m_i}$, as required.

\q It remains to consider the case $i=t$ and $e_l(\mu^i)<l$. Then 
$$m_t=\l(\Mull(\mu^t))=\l((\mu^t)')=(\mu^t)_1.$$
Moreover we have 
$$\l(\mu^t)=e_l(\mu^t)-m_t-1<l-m_t$$
and $\mu^t\in \Phi_{l-m_t}$, as required.

\end{proof}

\q Finally we record a couple of results that will be needed in our treatment of composition factors.

\begin{lemma} Let $1<m<l$. If $\theta=(\theta_1,\ldots,\theta_m)\in \Phi_m$ then $(\theta_2,\ldots,\theta_m)\in \Phi_{m-1}$.
\end{lemma}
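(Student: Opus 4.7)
The plan is to apply the definition of $\Phi_m$ directly and track how the bound on consecutive parts behaves under dropping the first row. Recall that $\theta \in \Phi_m$ means $\l(\theta) \leq m$ together with $\theta_1 - \theta_m \leq l - m$. I would set $\eta = (\theta_2, \ldots, \theta_m)$ and check the two conditions needed for membership in $\Phi_{m-1}$, namely $\l(\eta) \leq m-1$ and $\eta_1 - \eta_{m-1} \leq l - (m-1)$.

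The length condition is immediate: $\eta$ is given as a tuple with $m-1$ entries (which is automatically weakly decreasing and nonnegative, hence a partition of length at most $m-1$). For the gap condition, identify $\eta_1 = \theta_2$ and $\eta_{m-1} = \theta_m$. Since $\theta$ is weakly decreasing we have $\theta_2 \leq \theta_1$, so
\[
\eta_1 - \eta_{m-1} \;=\; \theta_2 - \theta_m \;\leq\; \theta_1 - \theta_m \;\leq\; l - m \;\leq\; l - (m-1),
\]
which is exactly the requirement for $\eta \in \Phi_{m-1}$. There is no real obstacle here; the statement is essentially a bookkeeping remark about the defining inequality, and the hypothesis $1 < m < l$ only ensures that $\Phi_{m-1}$ is defined in the relevant range.
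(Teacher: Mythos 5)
Your proof is correct and is essentially identical to the paper's own one-line argument: both check the length bound trivially and then chain $\theta_2-\theta_m\leq\theta_1-\theta_m\leq l-m\leq l-(m-1)$ using monotonicity of the parts. The only cosmetic difference is that the paper writes the final step as a strict inequality $l-m<l-(m-1)$, which is of course also valid.
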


\begin{proof} Certainly $(\theta_2,\ldots,\theta_m)$ has length at most $m-1$. Also, we have  
$$\theta_2-\theta_m\leq \theta_1-\theta_m\leq l-m< l-(m-1).$$

\end{proof}

\q We now fix $n$ and consider the reflection with respect to $m$ of an $m$-distinguished partition.

\begin{lemma}  Let $\lambda$ be an $m$-distinguished partition and suppose that $n\geq \l(\lambda)$. Then $\daggerlambda$ is $m$-distinguished.
\end{lemma}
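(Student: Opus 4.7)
The plan is to directly compute the $l$-adic decomposition of $\lambda^\dagger$ from that of $\lambda$. Writing $\lambda = \lambda^0 + l\bar\lambda$ with $\lambda^0 = (l-m)^k a_1 \cdots a_m$ and $\bar\lambda_1 < m$ (so in particular $\lambda^0_1 \leq l-m$ and $\bar\lambda_1 \leq m-1$), one first observes that $\lambda_1 \leq (l-m) + l(m-1) = m(l-1)$, so $\lambda^\dagger$ is indeed defined.

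The crux is the arithmetic identity
$$m(l-1) - \lambda_j = l\bigl(m - 1 - \bar\lambda_j\bigr) + \bigl(l - m - \lambda^0_j\bigr),$$
valid for every $j$. Both summands are non-negative: $\lambda_j^0 \leq l-m$ because each part of $\lambda^0$ is either $l-m$, one of $a_1,\ldots,a_m \in [0,l-m-1]$, or zero; and $\bar\lambda_j \leq m-1$ since $\bar\lambda_j \leq \bar\lambda_1 < m$. Moreover the digit $l-m-\lambda_j^0$ lies in $[0,l-1]$, so this is the genuine $l$-adic decomposition. Reversing indices yields
$$(\lambda^\dagger)^0_i = l - m - \lambda^0_{n+1-i}, \qquad \overline{\lambda^\dagger}_i = m - 1 - \bar\lambda_{n+1-i}$$
for $1 \leq i \leq n$; routine checks (using that $\lambda^0 \in X_1(n)$ and $\bar\lambda$ is a partition) confirm that $(\lambda^\dagger)^0 \in X_1(n)$ and $\overline{\lambda^\dagger}$ is a partition.

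It then remains to verify the two conditions of being $m$-distinguished. The bound $\overline{\lambda^\dagger}_1 = m - 1 - \bar\lambda_n \leq m - 1 < m$ is immediate. For the shape of $(\lambda^\dagger)^0$, set $h^0 = \ell(\lambda^0) \in [k, k+m]$: reading $\lambda^0$ backwards, the sequence $(\lambda^0_{n+1-i})_i$ produces $n - h^0$ zeros, then $h^0 - k$ entries in $[1, l-m-1]$ (the non-zero $a_j$'s), then $k$ copies of $l-m$. Correspondingly $(\lambda^\dagger)^0$ has $k' := n - h^0$ leading parts equal to $l-m$, then $h^0 - k$ parts strictly between $0$ and $l-m$, then $k$ trailing zeros, which fits the distinguished form $(l-m)^{k'} b_1 \cdots b_m$ (padding the trailing block up to length $m$) since $h^0 - k \leq m$.

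The only real subtlety --- the main (though mild) obstacle --- is this last piece of bookkeeping: verifying that the middle block of $(\lambda^\dagger)^0$ fits into the $m$ allotted slots. This is controlled precisely by the inequality $\ell(\lambda^0) - k \leq m$ built into the distinguished structure of $\lambda$, so no further ideas are needed beyond the single algebraic identity above.
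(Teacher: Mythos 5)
Your proposal is correct and takes essentially the same route as the paper: both rest on the identity $m(l-1) = (l-m) + l(m-1)$ applied coordinate-wise after reversing indices, with the paper presenting it as a single vector computation and you presenting it digit-by-digit with the $l$-adic decomposition of $m(l-1)-\lambda_j$. The additional checks you spell out (that $(\lambda^\dagger)^0$ is restricted, $\overline{\lambda^\dagger}$ is a partition, and the middle block fits the $m$ slots) are left implicit in the paper but are the same verifications.
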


\begin{proof} We write 
$$\lambda=(l-m,\ldots,l-m,a_1,\ldots,a_m,0,\ldots,0)+l(\mu_1,\ldots,\mu_r,0,\ldots,0)$$
with $l-m>a_1\geq\cdots\geq a_m\geq 0$ and $m>\mu_1\geq \cdots \geq \mu_r\geq 0$.
Then we have
\begin{align*}\daggerlambda&=(m(l-1),m(l-1),\ldots,m(l-1))-l(0,\ldots,0,\mu_r,\ldots,\mu_1)\cr
&-(0,\ldots,0,a_m,\ldots,a_1,l-m,\ldots,l-m)\cr
&=(l-m,\ldots,l-m)-(0,\ldots,0,a_m,\ldots,a_1,l-m,\ldots,l-m) \cr
&+l(m-1,\ldots,m-1)-l(0,\ldots,0,\mu_r,\ldots,\mu_1)\cr
&=(l-m,\ldots,l-m,l-m-a_m,\ldots,l-m-a_1,0,\ldots,0)\cr
&+l(m-1,\ldots,m-1,m-1-\mu_r,\ldots,l-m-\mu_1,0,\ldots,0)
\end{align*}
which is $m$-distinguished.

\end{proof}

\begin{remark} From the definition of $\barS(E)$ we see that a $1$-special partition has first entry $\lambda_1\leq l-1$ and in particular $\lambda$ is restricted.  Hence, from Proposition 4.10, $\lambda$ has the form $(l-1,\ldots,l-1,b)$ (with $l-1\geq b\geq 0$).  Assume that $K$ has positive characteristic $p$.  Thus $\lambda$ is $1$-special for $\dotG(n)$ if it has the form $(p-1,\ldots,p-1,b)$.
From Corollary 2.7 we get that  every composition factor of $S(E)$ has the form 
$$L(\lambda^0)\otimes (\dotL(\lambda^1) \otimes \cdots\otimes \dotL(\lambda^t)^{\dotF^{t-1}})^F$$
where $\lambda^0$ is $1$-special for the group $G(n)$ and $\lambda^1\ldots,\lambda^t$ are $1$-special for the group $\dotG(n)$.  Specialising to the classical case $q=1$ we thus 
 recover the description of Krop, \cite{K}, and Sullivan, \cite{Sull}, describing the composition factors of symmetric powers of $E$.

\end{remark}


\bigskip\bigskip
\bigskip\bigskip


\section{Towards the Main Results}

\q We here assemble the final ingredients needed in the proofs of the main results. We first prove that if $\lambda$ is an $m$-distinguished restricted partition then there exists a $1$-distinguished partition $\alpha$ and an  $(m-1)$-distinguished restricted $\mu$ such that $L(\lambda)$ is a composition factor of $L(\alpha)\otimes L(\mu)$. We begin with a couple of  preliminary results, given in the next section.

\bs

\bf Notation\, \rm For non-negative integers $b_1,\ldots,b_m$ we write $Q(b_1,\ldots,b_m)$ for the partition obtained by arranging the numbers $b_1,\ldots,b_m$ in descending order.

\begin{lemma} Let $(a_1,\ldots,a_m)$ and $(b_2,\ldots,b_m)$ be partitions.  Suppose that $(b_2,\ldots,b_m)\geq (a_2,\ldots,a_m)$ and $(a_1,\ldots,a_m)\geq Q(a_1,b_2,\ldots,b_m)$. Then we have $(a_2,\ldots,a_m)=(b_2,\ldots,b_m)$.
\end{lemma}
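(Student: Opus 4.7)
The plan is to reduce everything to a single observation about the first coordinate of $Q(a_1,b_2,\ldots,b_m)$. My claim is that $a_1\geq b_2$ is forced by the second hypothesis, after which the rearrangement $Q$ disappears and the result is immediate.

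First I would rule out $a_1<b_2$ by contradiction. If $a_1<b_2$, then since $b_2\geq b_3\geq\cdots\geq b_m$, sorting $(a_1,b_2,b_3,\ldots,b_m)$ into descending order places $b_2$ in the first slot. That is, $Q(a_1,b_2,\ldots,b_m)_1=b_2>a_1$, which contradicts the dominance inequality $(a_1,\ldots,a_m)\geq Q(a_1,b_2,\ldots,b_m)$ at index $i=1$. Hence $a_1\geq b_2$, and since $(b_2,\ldots,b_m)$ is already a partition, we have the simple identification
\[
Q(a_1,b_2,\ldots,b_m)=(a_1,b_2,\ldots,b_m).
\]

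Next I would unpack the dominance condition $(a_1,a_2,\ldots,a_m)\geq (a_1,b_2,\ldots,b_m)$. The inequality of partial sums for $i\geq 2$ reads
\[
a_1+a_2+\cdots+a_i\;\geq\;a_1+b_2+\cdots+b_i,
\]
so the common term $a_1$ cancels and we obtain $a_2+\cdots+a_i\geq b_2+\cdots+b_i$ for each $i\geq 2$. In particular the total-sum equality built into dominance yields $a_2+\cdots+a_m=b_2+\cdots+b_m$, so the inequality $(a_2,\ldots,a_m)\geq (b_2,\ldots,b_m)$ holds in dominance order. Combined with the hypothesis $(b_2,\ldots,b_m)\geq(a_2,\ldots,a_m)$, antisymmetry of dominance forces $(a_2,\ldots,a_m)=(b_2,\ldots,b_m)$.

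There is no serious obstacle: the only subtle point is making sure that the comparison $a_1\geq b_2$ follows from the first-coordinate inequality, which rests entirely on the elementary fact that the top entry of $Q(a_1,b_2,\ldots,b_m)$ is $\max(a_1,b_2)$. Once that is observed, the whole argument is a two-line cancellation followed by antisymmetry.
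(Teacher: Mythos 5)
Your argument is correct and is essentially the paper's own proof: both hinge on the observation that the first entry of $Q(a_1,b_2,\ldots,b_m)$ is $\max(a_1,b_2)$, so the dominance hypothesis forces $a_1\geq b_2$, after which $Q(a_1,b_2,\ldots,b_m)=(a_1,b_2,\ldots,b_m)$, the common first entry cancels from the partial sums, and antisymmetry of the dominance order gives $(a_2,\ldots,a_m)=(b_2,\ldots,b_m)$. The only difference is presentational (you rule out $a_1<b_2$ first, the paper treats it as a second case), so there is nothing to add.
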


\begin{proof} If $a_1\geq b_2$ then $Q(a_1,b_2,\ldots,b_m)=(a_1,b_2,\ldots,b_m)$ so we have \\
$(a_1,a_2,\ldots,a_m)\geq (a_1,b_2,\ldots,b_m)$
and hence $(a_2,\ldots,a_m)\geq (b_2,\ldots,b_m)$ and therefore $(a_2,\ldots,a_m) = (b_2,\ldots,b_m)$.

\q If $a_1<b_2$ then $Q(a_1,b_2,\ldots,b_m)$ has first entry $b_2$ and since $(a_1,\ldots,a_m)\geq Q(a_1,b_2,\ldots,b_m)$ we get $a_1\geq b_2$ and so   this case does not arise.
\end{proof}

\begin{remark}  Our interest  in the above is via Pieri's formula. Recall that for $a\geq 0$ and $\lambda\in \Lambda^+(n)$ the character of  the $G(n)$-module $\nabla(a)\otimes \nabla(\lambda)=S^a E\otimes \nabla(\lambda)$ is  $\sum_{\mu\in S}  \chi(\mu)$, where $S$ is the set of all partitions with at most $n$ part whose diagram may be obtained by adding a box to $a$ different columns of the diagram of $\lambda$, see \cite{Mac}, Chapter I, Section 5. Hence $S^a E\otimes \nabla(\lambda)$ has a good filtration with sections $\nabla(\mu)$, $\mu\in S$.   It  is not difficult to convince oneself that if $\l(\lambda)<n$ and  $\lambda=(b_2,\ldots,b_n)$  then the set $S$ has unique minimal element $Q(a,b_2,\ldots,b_n)$. Thus, in this situation, the module $V=\nabla(a)\otimes \nabla(b_2,\ldots,b_n)$ has a good filtration with $0=V_0<V_1<\cdots < V_t=V$ with $V_1=\nabla(Q(a,b_2,\ldots,b_n))$.

\end{remark}

\begin{proposition} Let $1<m<l$ and let $\lambda$ be a restricted $m$-distinguished partition of degree $r$ and let $n\geq r$. Then there exists a $1$-distinguished partition $\alpha$ and a restricted  $(m-1)$-distinguished partition $\mu$ such that  the $G(n)$-module $L(\lambda)$ is a composition factor of the $G(n)$-module  $L(\alpha)\otimes L(\mu)$.
\end{proposition}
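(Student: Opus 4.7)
Writing $\lambda=((l-m)^k,a_1,\ldots,a_m)$ with $0\le a_m\le\cdots\le a_1<l-m$ (since $\lambda$ is restricted), I would construct $\alpha$ ($1$-distinguished) and $\mu$ (restricted $(m-1)$-distinguished) explicitly, based on a case split on $k$, and then verify $[L(\alpha)\otimes L(\mu):L(\lambda)]\ne 0$ using Remark~5.2, Lemma~5.1 and Proposition~1.5.2.

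For the base case $k=0$ (so $\lambda=(a_1,\ldots,a_m)$), I take $\alpha=(a_1)$ and $\mu=(a_2,\ldots,a_m)$. Then $\alpha$ is $1$-distinguished since $a_1<l-m<l-1$, and $\mu$ is restricted $(m-1)$-distinguished, having length at most $m-1$ and entries bounded by $a_1<l-m<l-m+1$. Since $a_1<l$, one has $L(\alpha)=\nabla(\alpha)=S^{a_1}E$, so $L(\alpha)\otimes L(\mu)=\nabla(\alpha)\otimes L(\mu)$. Because $\lambda=Q(a_1,a_2,\ldots,a_m)$, Remark~5.2 applied to $V=\nabla(\alpha)\otimes\nabla(\mu)$ gives $\nabla(\lambda)$ as the minimum section of a good filtration of $V$, so $L(\lambda)\hookrightarrow V$ in the socle. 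Tensoring the short exact sequence $0\to L(\mu)\to\nabla(\mu)\to\nabla(\mu)/L(\mu)\to 0$ with $\nabla(\alpha)$ and invoking Lemma~5.1 to rule out $L(\lambda)$ as a composition factor of $\nabla(\alpha)\otimes L(\sigma)$ for the relevant $\sigma$'s appearing in $\nabla(\mu)/L(\mu)$ yields $[\nabla(\alpha)\otimes L(\mu):L(\lambda)]\ge 1$.

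For the general case $k\ge 1$, the naive split ($\alpha$ being the first row of $\lambda$, $\mu$ being the remaining rows) fails because $\mu$ would have $k+m-1$ parts, too many to be $(m-1)$-distinguished. Instead I would aim for $\alpha+\mu=\Mull(\lambda')$ componentwise, taking $\alpha$ of the form $((l-1)^{k_\alpha},a')$ and $\mu$ of the form $((l-m+1)^{k_\mu},b_1,\ldots,b_{m-1})$ whose columns and tails combine appropriately; such a split exists because $\Mull(\lambda')\in\Phi_m$ (by Proposition~4.10), so $\Mull(\lambda')_1-\Mull(\lambda')_m\le l-m$. Proposition~1.5.2 identifies $\Mull(\lambda')$ as the unique maximum $\tau$ with $[\nabla(\tau):L(\lambda)]\ne 0$, and since $L(\alpha)\otimes L(\mu)$ has $L(\alpha+\mu)=L(\Mull(\lambda'))$ as its top composition factor, $\nabla(\Mull(\lambda'))$ occurs as a section in the Pieri filtration of $\nabla(\alpha)\otimes\nabla(\mu)$, giving $L(\lambda)$ there. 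The main obstacle is the passage from $\nabla(\alpha)\otimes\nabla(\mu)$ down to $L(\alpha)\otimes L(\mu)$ in the case $k\ge 1$: here $\alpha+\mu$ strictly dominates $\lambda$, so $L(\lambda)$ is not the top composition factor, and one must track which Pieri summands contribute $L(\lambda)$ to $L(\alpha)\otimes L(\mu)$ versus to the ``error'' coming from $\nabla(\mu)/L(\mu)$. This control is achieved via the uniqueness statement of Lemma~5.1 combined with block-theoretic arguments applied to the good filtration.
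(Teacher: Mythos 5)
Your base case $k=0$ is sound and, on inspection, secretly reproduces the paper's argument in the special situation $\Mull(\lambda')=\lambda$: you produce $L(\lambda)$ in the socle of $L(a_1)\otimes\nabla(a_2,\ldots,a_m)$, and the reason it cannot escape into $L(a_1)\otimes(\nabla(\mu)/L(\mu))$ is exactly the sandwich $(a_1,\ldots,a_m)\geq\xi\geq Q(a_1,\Mull(\sigma'))$ forced by Proposition 1.5.2 and Remark 5.2, which via Lemma 5.1 pins $\Mull(\sigma')=(a_2,\ldots,a_m)$ and hence $\sigma=\mu$. That is a correct (if somewhat compressed) argument.

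The $k\geq 1$ case, however, has a genuine gap, and the strategy you sketch there is not the right one. You aim to choose $\alpha$ and $\mu$ \emph{in advance} with $\alpha+\mu=\Mull(\lambda')$ and then hope to locate $L(\lambda)$ inside $L(\alpha)\otimes L(\mu)$. Two problems arise. First, such an additive splitting of $\Mull(\lambda')\in\Phi_m$ into a restricted $1$-distinguished piece $((l-1)^{k_\alpha},a')$ and a restricted $(m-1)$-distinguished piece $((l-m+1)^{k_\mu},b_1,\ldots,b_{m-1})$ is not automatic from $\Mull(\lambda')_1-\Mull(\lambda')_m\le l-m$; matching row lengths already forces $k_\mu\le 1$ and imposes rigid congruence constraints on the entries, and you give no argument that a compatible choice always exists. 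Second, and more seriously, even with a splitting in hand, the passage from $[\nabla(\alpha)\otimes\nabla(\mu):L(\lambda)]\neq 0$ to $[L(\alpha)\otimes L(\mu):L(\lambda)]\neq 0$ is precisely the crux, and the appeal to ``Lemma 5.1 plus block-theoretic arguments'' is not a proof: Lemma 5.1 is a purely combinatorial statement about partitions and does not by itself control where $L(\lambda)$ lands when you filter $\nabla(\alpha)\otimes\nabla(\mu)$ by composition factors of the two $\nabla$'s.

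The paper avoids all of this by \emph{not} prescribing $\alpha$ and $\mu$ at the outset. Writing $\Mull(\lambda')=(a_1,\ldots,a_m)$, it observes that $[\nabla(a_1)\otimes\nabla(a_2,\ldots,a_m):L(\lambda)]\neq 0$, hence $[\nabla(a_1)\otimes L(\theta):L(\lambda)]\neq 0$ for \emph{some} composition factor $L(\theta)$ of $\nabla(a_2,\ldots,a_m)$ (restricted by Lemma 3.1). It then uses the maximality statement of Proposition 1.5.2 twice (for $\lambda$ and for $\theta$) together with Remark 5.2 and Lemma 5.1 to deduce $\Mull(\theta')=(a_2,\ldots,a_m)$, so that $\theta$ is restricted $(m-1)$-distinguished by Lemma 4.11 and Lemma 4.3/Proposition 4.6. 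Finally, $L(\lambda)$ is a composition factor of $L(\alpha)\otimes L(\theta)$ for some composition factor $L(\alpha)$ of $\nabla(a_1)=S^{a_1}E$, and $\alpha$ is restricted and hence of the form $(l-1,\ldots,l-1,b)$, i.e., restricted $1$-distinguished. This uniform argument runs without any case split on $k$ and without needing $\alpha+\theta=\Mull(\lambda')$; indeed there is no reason to expect that identity. You should replace your $k\geq 1$ strategy with this one.
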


\begin{proof}  We have  $\Mull(\lambda')=(a_1,\ldots,a_m)$, for some $(a_1,\ldots,a_m)\in \Phi_m$, by Lemma 4.3 and Proposition 4.6.
The partition $(a_1,\ldots,a_m)$ is the unique maximal element of the set $\{\tau \in \Lambda^+(n,r) \vert [\nabla(\tau):L(\lambda)]\neq 0\}$, by Proposition 1.5.2.   The module  $\nabla(a_1,\ldots,a_m)$  occurs as a section in a good filtration of $\nabla(a_1)\otimes \nabla(a_2,\ldots,a_m)$ (see the above Remark), in particular we have  $[\nabla(a_1)\otimes \nabla(a_2,\ldots,a_m):L(\lambda)]\neq 0$ and hence $[\nabla(a_1)\otimes L(\theta):L(\lambda)]\neq 0$ for some $\theta\in \Lambda^+(n)$  such that $L(\theta)$ is a composition factor of $\nabla(a_2,\ldots,a_m)$. By Lemma 3.1, $\theta$ is restricted. 

\q Let $\phi=\Mull(\theta')$.  Then,  by Proposition 1.5.2, $\phi$ is the unique maximal element of the set $\{\tau\in \Lambda^+(n,s) \vert [\nabla(\tau):L(\theta)]\neq 0\}$, where $s=\deg(\theta)$, in particular we have $\phi\geq (a_2,\ldots,a_m)$ and so $\l(\phi)\leq m-1$.  We write $\phi=(b_2,\ldots,b_m)$.

\q Since $[\nabla(a_1)\otimes L(\theta):L(\lambda)]\neq 0$, we have 
$$[\nabla(a_1)\otimes \nabla(b_2,\ldots, b_m):L(\lambda)]\neq 0.$$

Let $\xi\in \Lambda^+(n)$ be such that $\nabla(\xi)$ occurs as a section in a  good filtration of $\nabla(a_1)\otimes \nabla(b_2,\ldots, b_m)$  and $[\nabla(\xi):L(\lambda)]\neq 0$.  Thus  $(a_1,\ldots,a_m)\geq \xi$ so, by the Remark 5.2, we have $\xi\geq Q(a_1,b_2,\ldots,b_m)$.

\q Now we have  $(a_1,\ldots,a_m)\geq \xi \geq Q(a_1,b_2,\ldots,b_m)$ and by  Lemma 5.1,  we have $(a_2,\ldots,a_m)=(b_2,\ldots,b_m)$, i.e., $\phi=(a_2,\ldots,a_m)$.  By Lemma 4.11  we have $\phi\in \Phi_{m-1}$ and hence, by Lemma 4.3 and Proposition 4.6, $\theta$ is a restricted $(m-1)$-distinguished partition. Now $L(\lambda)$ is a composition factor of $L(\alpha)\otimes L(\theta)$ for some composition factor $L(\alpha)$ of $\nabla(a_1)$. By Lemma 3.1, $\alpha$ is restricted and hence of the form $(l-1,\ldots,l-1,b)$, i.e., a restricted $1$-distinguished partition.

\end{proof}

\q We now turn our attention to an analysis of the $l$-edge of a partition.

\bs

{\bf Definitions and Notation}

\medskip

Let $\lambda$ be a partition.

(i)  We will denote  the $l$-edge of a partition $\lambda$ by $\E_l(\lambda)$.

(ii) We will say that $\lambda$ is {\it edge $l$-connected}  if the collection of nodes $\E_l(\lambda)$ is connected. More precisely,  if $\lambda$  has (non-zero) Mullineux components $\lambda^1,\lambda^2,\ldots,\lambda^{t+1}$ then $\lambda$ is edge $l$-connected if  for each $1\leq i\leq t$ the final node of the first $l$-segment of $(\lambda^i|\lambda^{i+1})$ lies directly above the node $(\l(\lambda^i)+1,\lambda^{i+1}_1)$.  This may be also expressed by the condition 

\bs

(*) $\lambda^i_1-\lambda^{i+1}_1+\l(\lambda^i)=l$, for $1\leq i< t$. 

\bs

\q Now write  $\lambda=(\alpha|\rho)$, where $\alpha$ is the first Mullineux component. We say that $\lambda$ is {\it initially edge $l$-connected} if either $\rho=0$ or $\rho\neq 0$ and $\alpha_1-\rho_1+\l(\alpha)=l$. Thus $\lambda$ is edge $l$-connected if and only if it is initially edge $l$-connected and $\rho$ is edge $l$-connected.

\q  If $\lambda$ is not  edge $l$-connected we will say that it is {\it edge $l$-disconnected}.

(iii)    If $\lambda$ is a partition and $H$ is a skew $l$-hook in the diagram of $\lambda$ (as in \cite{James}, Chapter 17) we denote by $\lambda_H$ the partition whose diagram is obtained by removing $H$ from the diagram of $\lambda$.

\begin{lemma}  Let $\lambda$ be an edge $l$-connected  partition such that  $e_l(\lambda)$ is not divisible by  $l$.

(i)   If $H$ is any skew $l$-hook of (the diagram of) $\lambda$ then $\lambda_H$ is edge $l$-connected,  $e_l(\lambda)$ not divisible by  $l$ and $(\lambda_H)_1=\lambda_1$.

(ii) We have $\core(\lambda)_1=\lambda_1$.

\end{lemma}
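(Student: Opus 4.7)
\medskip\noindent\textbf{Plan.} The plan is to recast the hypothesis in terms of the $l$-abacus and read off both parts from a single combinatorial observation. Fix $n\geq\l(\lambda)$ and set $\beta_i=\lambda_i+n-i$, so that the $l$-abacus of $\lambda$ carries beads at the positions $\beta_1>\beta_2>\cdots>\beta_n$. In this picture, removing a skew $l$-hook $H$ from $\lambda$ corresponds to sliding a single bead from some occupied position $p$ to the (necessarily unoccupied) position $p-l\geq 0$, and $\core(\lambda)$ is the configuration reached by iterating such slides until none is legal.

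The heart of the argument is that the two hypotheses together are equivalent to a single condition on the runner of $\beta_1$: this runner carries exactly $T$ beads, occupying the topmost $T$ positions $r,\,r+l,\,\ldots,\,r+(T-1)l=\beta_1$ (with $r<l$). For the forward direction, let $\lambda^1,\ldots,\lambda^T$ be the Mullineux components of $\lambda$ and put $r_k=\l(\lambda^1)+\cdots+\l(\lambda^k)$ (with $r_0=0$). Telescoping the edge $l$-connected identities $\lambda^i_1-\lambda^{i+1}_1+\l(\lambda^i)=l$ yields $\beta_{r_k+1}=\beta_1-kl$ for $0\leq k\leq T-1$, so these $T$ beads all lie on the runner of $\beta_1$. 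The condition $l\nmid e_l(\lambda)$ forces the final $l$-segment of $\lambda$ to have length strictly less than $l$, and since this segment is the entire rim of $\lambda^T$ the identity $\beta_{r_{T-1}+1}=e(\lambda^T)<l$ places the bottommost packed bead at the very top of the runner.

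Granted this translation, both conclusions are immediate. No bead on the packed runner can be slid upward (the slot directly above each is either occupied by the next packed bead or off the abacus), so every skew $l$-hook $H$ of $\lambda$ is a bead slide on a different runner; such a slide fixes $\beta_1$, whence $(\lambda_H)_1=\beta_1-n+1=\lambda_1$. It also leaves the runner of $\beta_1$ packed from the top, and the converse direction of the translation then yields that $\lambda_H$ is still edge $l$-connected with $l\nmid e_l(\lambda_H)$. This gives~(i). For~(ii) we observe that the entire sequence of slides leading to $\core(\lambda)$ takes place on runners disjoint from the packed runner, so $\beta_1$ survives unchanged and $\core(\lambda)_1=\lambda_1$.

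The main obstacle is the converse direction in the abacus translation: given a partition $\mu$ whose top-bead runner is packed from the top, one must verify that $\mu$ is edge $l$-connected with $l\nmid e_l(\mu)$. This requires identifying the $T$ rows of $\mu$ holding the packed beads as precisely the boundaries of the Mullineux decomposition of $\mu$, running the telescoping computation in reverse while simultaneously checking that the recursive $l$-segment construction actually terminates at those rows and that the final segment has length strictly less than $l$.
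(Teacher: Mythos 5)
Your abacus translation is a genuinely different route from the paper's argument, which works directly on the Young diagram by induction on degree, locating where a skew $l$-hook can meet the Mullineux components. Your approach has the attraction that part~(ii) falls out instantly (the bead at $\beta_1$ cannot move, since the position below it on its runner is occupied), whereas the paper derives~(ii) only by iterating~(i). The forward direction of your translation is essentially correct, once one fixes $n=\l(\lambda)$: the telescoping identity $\beta_{r_k+1}=\beta_1-kl$ is right, and $\beta_{r_{T-1}+1}=e_l(\lambda^T)<l$ does put the lowest bead of the packed runner at position $<l$. (As stated, with an arbitrary $n\geq\l(\lambda)$, the ``zero rows'' contribute beads at positions $0,\ldots,n-\l(\lambda)-1$, and the precise count of beads on the distinguished runner shifts; you should either set $n=\l(\lambda)$ throughout or add a sentence explaining why the choice of $n$ is harmless.)

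The real problem is that you have identified, but not closed, the gap on which part~(i) entirely depends. After a bead slide $H$ on another runner you correctly observe that the distinguished runner is still packed from the top and $\beta_1$ is unchanged, giving $(\lambda_H)_1=\lambda_1$. But to conclude that $\lambda_H$ is again edge $l$-connected with $l\nmid e_l(\lambda_H)$ you need the \emph{converse} of your translation, and this is precisely what you flag at the end as ``the main obstacle'' without proving it. This is not a routine remark: the converse is the content of part~(i), and it requires a careful inductive check that the abacus condition forces the Mullineux recursion to break exactly at the rows holding the packed beads. Concretely, you must show: if $\beta_1-l$ is occupied, say by $\beta_{j_1}$, then the minimal $h$ with $\mu_1-\mu_{h+1}+h\geq l$ is $h=j_1-1$, so the first component ends at row $j_1-1$ and satisfies $\mu_1-\mu_{j_1}+(j_1-1)=l$; then recurse on $(\mu_{j_1},\ldots,\mu_n)$; and finally, for the last component $\lambda^T$ one computes $e(\lambda^T)=\beta_{j_{T-1}}$, which is $<l$ because it is the bottom position of the packed runner, and is $\geq 1$ because $\mu_{j_{T-1}}\geq \mu_n\geq 1$, giving $l\nmid e_l(\mu)$. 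None of this is long, but it is where all the work lies, and until it is written the proof of~(i) is not complete.
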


\begin{proof}  (i)   If $e(\lambda)<l$ then the result is vacuously true. We assume now that $\lambda$ is a counterexample of minimal degree.  Thus  we can write   $\lambda=(\alpha|\rho)$, with $\alpha$ the first  Mullineux component of $\lambda$ and $\rho\neq 0$.   Let $h=\l(\alpha)$.  If no node of $H$ belongs to the first $h$ rows then we may write $\lambda_H=(\alpha|\rho_J)$, for some skew $l$-hook $J$ of $\rho$. But then $\rho_J$ is edge $l$-connected, $e_l(\rho)$ is  not divisible by  $l$ and $(\rho_J)_1=\rho_1$, by minimality.  But then  the same holds for $\lambda$. Hence $H$ contains  a node of the diagram of $\alpha$.

\q Now the number of nodes in the part of the edge from $(1,\lambda_1)$ to $(h,\rho_1)$ is the edge length of $(\lambda_1-\rho_1+1,\ldots,\lambda_h-\rho_1+1)$ i.e., $\lambda_1-\rho_1+1+h-1=l$. So if $H$ involves $(1,\lambda_1)$ then it ends in $(h,\rho_1)$. But this is impossible since then the removal of $H$ from the diagram of $\lambda$ would not result in   the diagram of a partition. Hence  $H$ does not contain the node $(1,\lambda_1)$. Similarly, $H$ can not be contained entirely within the diagram of $\alpha$.  Thus $H$ contains the nodes $(h,\rho_1)$ and $(h+1,\rho_1)$.  

\q Let $\beta$ denote the first  Mullineux component of $\rho$ and write $\rho=(\beta|\sigma)$, so that $\lambda=(\alpha|\beta|\sigma)$. Let $k=\l(\beta)$. We note that $H$ is contained within the diagram of $(\alpha|\beta)$. This is of course true if $\sigma=0$. For  $\sigma\neq 0$ we would  otherwise  have that $H$ contains the node $(h,\beta_1)$ and also  nodes $(h+1,\lambda_{h+1})$ and $(h+k+1,\sigma_1)$ and hence would contain more nodes than are in the edge of $(\lambda_{h+1}-\sigma_1+1,\ldots,\lambda_{h+k+1}-\sigma_1+1)$  and we would have 
$$l>  \lambda_{h+1}-\sigma_1+1+k-1=    \beta_1-\sigma_1+k=l.$$  

\q Let $\mu=\lambda_H$.   Thus we have $\mu_1=\lambda_1$ and $\mu_h=\rho_1-1$ (since $H$ contains the nodes $(h,\rho_1)$ and $(h+1,\rho_1)$.   Now we have 
$$\mu_1-\mu_h+(h-1)= \lambda_1-\rho_1+h=l.$$
 Hence $\mu$ has first Mullineux component $\gamma=(\mu_1,\ldots,\mu_{h-1})$ of length $h-1$ and $\mu$ is initially $l$-connected.   Let $\de=(\mu_h,\ldots,\mu_{h+k})$ so now $\mu=(\gamma|\delta|\sigma)$. 
 
 \q If $\sigma\neq 0$ then since 
 $$\delta_1-\sigma_1+(k+1)=\mu_h-\sigma_1+k+1=\beta_1-1-\sigma_1+k+1=l$$
  we have that $\delta$ is the second Mullineux component of $\mu$ so that 
  $$e_l(\lambda_H)=e_l(\gamma|\delta|\sigma)=l+l+e_l(\sigma)=e_l(\alpha)+e_l(\beta)+e_l(\sigma)=e_l(\lambda)$$
  and we are done.
  
  \q So we can assume that $\sigma=0$, i.e., $\lambda=(\alpha|\beta)$ and $e(\beta)<l$. But now we have
   $$e(\delta)\leq \delta_1+(k+1)-1=\rho_1-1+k=\beta_1 - 1+k\leq e(\beta)$$
   and again we are done unless $e(\delta)=0$, i.e.,  unless $\delta=0$.  In that case we have $0=\delta_1=\mu_h=\rho_1-1$, so  $\rho_1=1$.   We then have 
   $$0<e(\mu)=e(\gamma)\leq \mu_1+(h-1)-1=\lambda+h-\rho_1-1=l-1$$
   and the proof is complete.

(ii) This follows by repeated application of (i).

\end{proof}

\begin{lemma} Let $\lambda$ be an  $l$-regular partition. Assume that $\lambda$ is  edge $l$-connected  and $l\mid e_l(\lambda)$. Let $\tilde\lambda$ be the partition whose diagram is obtained by removing the first column from the diagram of  $\lambda$. Then $\l(\m(\lambda))=\l(\m(\tilde\lambda))$.

\end{lemma}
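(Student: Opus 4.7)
The plan is to analyze the Mullineux component decomposition of $\lambda$ and track how each component transforms under removal of the first column, and then sum the resulting pointwise identities using Lemma 4.9. Write $\lambda=(\lambda^1|\cdots|\lambda^t)$ with Mullineux components $\lambda^i$. Since $e_l(\lambda^i)=l$ for $i<t$ and $l\mid e_l(\lambda)=\sum_i e_l(\lambda^i)$, we get $l\mid e_l(\lambda^t)$ and hence $e_l(\lambda^t)=l$ as well, so every component satisfies $e_l(\lambda^i)=l$.

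I would first record two structural observations. For each $i\geq 2$ we must have $\lambda^i_1\geq 2$: otherwise $\lambda^i$ would be a single-column partition $(1^k)$, and the requirement $e_l(\lambda^i)=l$ would force $k=l$, giving $l$ consecutive parts equal to $1$ in $\lambda$ and contradicting $l$-regularity. By the compatibility built into the concatenation this also yields $\lambda^i_{\l(\lambda^i)}\geq \lambda^{i+1}_1\geq 2$ for every $i<t$, so only $\lambda^t$ can end in a row equal to $1$. Moreover, for $i<t$ we must have $e(\lambda^i)>l$: otherwise the first $l$-segment of $\lambda^i$ would be its entire rim, ending at column $1$, which would force $\lambda^{i+1}_1=1$ via condition (*).

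Next, let $\tilde\lambda^i$ denote $\lambda^i$ with its first column removed. The row lengths of $\tilde\lambda$ are precisely the $\lambda_j-1$ (rows where $\lambda_j=1$ disappearing), so $\tilde\lambda=(\tilde\lambda^1|\cdots|\tilde\lambda^t)$. I would verify that this is the Mullineux decomposition of $\tilde\lambda$: each $\tilde\lambda^i$ is itself a single Mullineux component (its first $l$-segment still ends at its own last row), and the shifted identity $\tilde\lambda^i_1-\tilde\lambda^{i+1}_1+\l(\tilde\lambda^i)=l$ holds whenever $\l(\tilde\lambda^i)=\l(\lambda^i)$. The bookkeeping for $\tilde\lambda^t$ in the edge case $\lambda^t_{\l(\lambda^t)}=1$, where the last component loses its bottom row, is the main technical obstacle and must be checked by hand.

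The heart of the argument is the pointwise identity $\l(\m(\tilde\lambda^i))=\l(\m(\lambda^i))$, obtained by applying the length formula in three subcases. When $e(\lambda^i)>l$ (which covers all $i<t$ and the generic case $i=t$) we have $e_l(\tilde\lambda^i)=l$ with unchanged length, so both sides equal $l-\l(\lambda^i)$. When $e(\lambda^t)=l$ and $\lambda^t_{\l(\lambda^t)}\geq 2$, the segment size drops to $l-1$ but the correction term for $l\nmid e_l(\tilde\lambda^t)$ contributes exactly one, compensating the loss. Finally, when $\lambda^t_{\l(\lambda^t)}=1$, both $e_l$ and $\l$ drop by one additional unit and the formula again returns $l-\l(\lambda^t)$; the case $l=2$ is excluded here since it would require $\lambda^t=(1,1)$, which is not $l$-regular. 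Summing these termwise identities via Lemma 4.9 yields $\l(\m(\lambda))=\sum_i\l(\m(\lambda^i))=\sum_i\l(\m(\tilde\lambda^i))=\l(\m(\tilde\lambda))$, completing the proof.
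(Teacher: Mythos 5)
Your proof is correct and uses the same basic ingredients as the paper's: decompose into Mullineux components, check that the decomposition is preserved by first-column removal, and invoke Lemma 4.9 to reduce the statement to a component-wise identity. The difference lies in how that identity is established. The paper peels off only the first component, $\lambda=(\alpha|\rho)$, and appeals to induction on degree for both $\alpha$ and $\rho$; to ensure $\tilde\lambda=(\tilde\alpha|\tilde\rho)$ it rules out the first $l$-segment ending at $(h,1)$ by observing that such a configuration forces $\rho=(1^s)$ with $l\mid s$, which contradicts $l$-regularity when $s>0$. You instead decompose fully into $(\lambda^1|\cdots|\lambda^t)$, verify directly that removing the first column preserves the component structure, and compute $\l(\m(\tilde\lambda^i))=\l(\m(\lambda^i))$ for each $i$ straight from the length formula. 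This has the merit of making explicit the single-component base case (your three subcases), which the paper's induction implicitly relies on but does not write out: when $\rho=0$ and $e(\lambda)=l$ the first $l$-segment does end at $(h,1)$ and the inductive reduction gives nothing, so precisely a direct calculation like yours is what is needed there. One small imprecision: in your third subcase the last component may lose several rows at once (any terminal string of parts equal to $1$ vanishes), so the phrase that $e_l$ and $\l$ \emph{drop by one additional unit} should be read as the drop in $e_l$ exceeding the drop in $\l$ by exactly one; with that reading the length formula still returns $l-\l(\lambda^t)$, as you claim.
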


\begin{proof} We write $\lambda=(\alpha|\rho)$, where $\alpha$, of length $h$, say, is the first  Mullineux component of $\lambda$.  Note that the first $l$-segment does not end at $(h,1)$, for otherwise $\rho$ would have the form $(1^s)$, and  we would have $e_l(\lambda)=e_l(\alpha)+s$ divisible by $l$, which is incompatible with the $l$-regularity of $\lambda$. Thus $\tilde\lambda=(\tilde\alpha|\tilde\rho)$ (where the diagram of $\tilde\alpha$ (\resp.   $\tilde\rho$)  is obtained by removing the first column of the diagram of $\alpha$ (\resp.  $\rho$)). So we get 
\begin{align*}\l(\m(\lambda))&=\l(\m(\alpha))+\l(\m(\rho))\cr
&=\l(\m(\tilde\alpha))+\l(\m(\tilde\rho))=\l(\m(\tilde\lambda))
\end{align*}
by induction on degree.

\end{proof}

\begin{remark} Suppose $(\alpha,\beta,\gamma)$  is a compatible triple of partitions (i.e., $(\alpha,\beta)$ and $(\beta,\gamma)$ are compatible pairs)  and $\lambda=(\alpha|\beta|\gamma)$.  Let $B$ be an addable node of $\beta$ and let $A$ be the corresponding addable node of $\lambda$, i.e.., the node such that 
 $\lambda^A=(\alpha|\beta^B|\gamma)$. Let $S$ be an edge node of $\beta$ and let $R$ be the corresponding edge node of $\lambda$, i.e., if $S=(i,j)$ then $R=(\l(\alpha)+i,j)$.  Then $\res(A)=\res(R)$ if and only if $\res(B)=\res(S)$.

\end{remark}

\q In order to prove our final two  lemmas we  need one more  useful remark.  

\begin{remark}
 Let $\lambda$ be an  $l$-regular partition with $e_l(\lambda)\leq l$. We embed $\lambda$ into the rectangular partition  $\mu=(\lambda_1)^{\l(\lambda)}$. For any node $B=(i,j)$ of the skew diagram $[\mu]\backslash [\lambda]$ we have 
 $\res(B)\neq\res(1,\lambda_1)=\lambda_1-1$. Indeed, since $e_l(\lambda)\leq l$, we have that $\lambda$ has  only one $l$-segment and if $(\l(\lambda),c)$ is the last node of $\E_l(\lambda)$, then $\lambda_1+\l(\lambda)-c\leq l$. Thus we have  $2\leq i\leq \l(\lambda)$ and  $c<j \leq\lambda_1$. If $\res(B)=\lambda_1-1$, then we would have that $\lambda_1-j +(i-1)=0\ \mod\  l$. But this is impossible since,

$$1\leq \lambda_1-j+(i-1)<\lambda_1+\l(\lambda)-c\leq l.$$

\end{remark}

\begin{lemma}

Let $\mu$ be an $l$-regular partition. Assume that $\mu$ is edge $l$-disconnected.  Then there is  a co-suitable node $R$ of $\mu$ such that $\mu_R$ is $l$-regular and  $\l(\m(\mu))
=\l(\m(\mu_R))$.

\end{lemma}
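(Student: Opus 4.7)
Let $\mu^1, \ldots, \mu^t$ denote the Mullineux components of $\mu$, and let $i_0$ be the smallest index such that condition $(*)$ fails. Writing $r = \l(\mu^1) + \cdots + \l(\mu^{i_0})$, the $i_0$-th $l$-segment of $\mu$ ends in row $r$ at a column $c^{*} > \mu^{i_0+1}_1 = \mu_{r+1}$, with $c^{*} = \mu^{i_0}_1 + \l(\mu^{i_0}) - l$ by the length of the segment. My first candidate for $R$ is the removable node $(r, \mu_r)$ at the right end of row $r$; the strict inequality $\mu_r \geq c^{*} > \mu_{r+1}$ confirms removability.

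I would then verify in turn: (a) $\mu_R$ is $l$-regular; (b) $\l(\m(\mu)) = \l(\m(\mu_R))$; and (c) $R$ is co-suitable. For (a), any violation would force the decremented value $\mu_r - 1$ to begin an $l$-tuple of equal entries, a configuration incompatible with the $l$-regularity of $\mu^{i_0+1}$ up to a small adjustment of $R$ within its row or to a neighbouring removable node. For (b), I would invoke Lemma~4.9: the Mullineux components of $\mu_R$ coincide with those of $\mu$ except in position $i_0$, where the decrement only shortens the last row, and the inequality $c^{*} > \mu^{i_0+1}_1$ ensures the $l$-segment structure of the standalone $\mu^{i_0}$ is unaffected, preserving both $e_l$ and $\l$ and hence the Mullineux length contribution.

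Property (c) is the main obstacle. The residue of $R$ is $\mu_r - r$; using $(*)$ iteratively for $1 \leq i < i_0$, the top block $(\mu^1 | \cdots | \mu^{i_0})$ is edge $l$-connected and the endpoint $(r, c^{*})$ of the $i_0$-th $l$-segment has residue $\equiv \mu^1_1 \pmod{l}$, so $\res(R) \equiv \mu^1_1 + \delta \pmod{l}$ with $\delta = \mu_r - c^{*} \geq 0$. The higher addable nodes of $\mu$ lie in columns strictly greater than $\mu_r$, hence within rows $1, \ldots, r$, and their residues are determined by the segment alignment in the top block; the plan is to apply the Remark preceding this lemma to the relevant sub-partitions (whose first $l$-segments cover every residue modulo $l$ exactly once) to rule out any residue coincidence. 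In the degenerate case $\delta = 0$, where $\res(R)$ coincides with the residue of the topmost addable $(1, \mu_1 + 1)$, the choice of $R$ must be modified to an alternative removable node of different residue still compatible with (a) and (b). The principal difficulty is precisely the coordinated handling of (a), (b) and (c) in all cases, particularly the $\delta = 0$ situation which forces this delicate reselection of $R$.
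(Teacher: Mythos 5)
Your choice of node is genuinely different from the paper's: you take the bottom-right node $(r,\mu_r)$ of the $i_0$-th Mullineux component, whereas the paper works with the top-right node $R_s=(\sum_{j<s}h_j+1,\mu^s_1)$ (after first proving, in Steps 2--3, that the case $\mu^s_1>\mu^s_2$ can be forced) and, in its Step 4, with nodes located deep inside the diagram of the form $((i-1)(l-1)+u,a-i+1)$. The alternative would be interesting if it closed, but it does not: the ``degenerate case $\delta=0$'' that you flag at the end and leave unresolved is not a peripheral corner case but the generic difficulty of the lemma. It occurs precisely when the $i_0$-th $l$-segment of $\mu$ terminates at the leftmost edge node $(r,\mu_r)$ of the last row of $\mu^{i_0}$, i.e.\ when $\mu^{i_0}_1-\mu^{i_0}_{h_{i_0}}+h_{i_0}=l$, which happens all the time --- for instance $\mu=(4,3,1)$ or $\mu=(5,4,1)$ with $l=3$ both fall into it. In that case one always has $\res(R)\equiv\mu_1\pmod l$, matching the residue of the topmost addable node $(1,\mu_1+1)$, so $(r,\mu_r)$ is \emph{never} co-suitable, and you give no replacement node. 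The paper's Steps 2--4 are precisely the construction of the correct $R$ in exactly this range of configurations; that is where the whole content of the proof lies.

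There are two further gaps you should be aware of even in the non-degenerate case. Your argument for (a), that $\mu_R$ remains $l$-regular, amounts to the claim that any violation is ``incompatible with the $l$-regularity of $\mu^{i_0+1}$ up to a small adjustment of $R$'' --- but this is exactly the phenomenon the paper has to handle carefully (see the argument in Step 3 and the case analysis of Step 4, where the violating configuration $\mu^s=a(a-1)^{h_s-1}$ is identified explicitly and ruled out or circumvented). Your argument for (c) in the non-degenerate case is also only a plan (``apply the Remark preceding this lemma to the relevant sub-partitions''): Remark 5.7 concerns residues of nodes in a rectangle minus a partition and does not directly bound the residues of the addable nodes above $R$, so that step would need to be spelt out. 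As written, the proposal correctly identifies a removable node, sets up the right framework via the disconnectedness index $i_0$ and the Mullineux-component additivity of Lemma 4.9, but stops short of the actual combinatorial work that makes the lemma true.
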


\begin{proof}    Assume not and that $\mu$ is a counterexample of minimal degree.  Our strategy is to first work up from the point in the  diagram at which connectedness first fails  to show in particular  that $\mu_1=\mu_2$ and then work down from the top of the diagram using this information. 

\q We write $\mu=(\mu^1|\mu^2|\cdots|\mu^m)$, where $\mu^1,\ldots,\mu^m$ are the (non-zero)\\
 Mullineux components. Let $h_i=\l(\mu^i)$, for $1\leq i\leq m$. We suppose that $k$ is minimal such that $(\mu^1|\cdots|\mu^{k+1})$ is edge $l$-disconnected. Thus we have: 
\bs

\sl (*) $\mu^i_1-\mu^{i+1}_1+h_i=l$, for $1\leq i <k$ and $\mu^k_1-\mu^{k+1}_1+h_k>l$.

\rm 
\bs

\q We write $R_i$ for the node $(\sum_{j<i} h_j +1,\mu^i_1)$, for $1\leq i\leq k$. 
\bs

{\it Step 1.}\q  \sl We have $\res(R_1)=\res(R_2)=\cdots=\res(R_k)$.

\rm

\bs
{\it Proof of Step 1.}   For $1\leq i<k$ we have $\mu^i_1-\mu^{i+1}_1+h_i=l$ so that $\mu^i_1-\sum_{j<i} h_j-1$ is congruent (modulo $l$) to $\mu^{i+1}_1-h_{i}-\sum_{j<i} h_j-1=\mu^{i+1}_1-\sum_{j<i+1}h_j-1$, i.e., $\res(R_i)=\res(R_{i+1})$.

\bs

{\it Step 2.}  \sl We have $\mu^k_1=\mu^k_2$.
\rm

\bs

{\it Proof of Step 2.}   Suppose for a contradiction that we have $\mu^k_1>\mu^k_2$.   Then $R_k$ is a removable node. We claim that $R_k$ is co-suitable. If not let $A$ be an addable node above $R_k$ whose residue is that of $R_k$.

 \q  We have 
 $\mu^A=(\mu^1|\cdots|\mu^{i-1}|(\mu^i)^B|\mu^{i+1}|\cdots|\mu^m)$ for some $1\leq i<k$ and some addable node $B$ of $\mu^i$.   Now $R_k$ has the same residue as $R_i$, by Step 1. Let $S$ be the corresponding node of $\mu^i$, i.e.,  $S=(1,\mu^i_1)$. If $R_k$ and $A$ have the same residue then so do  $S$ and $B$, by Remark 5.6. This is obviously not true if $B=(1,\mu^i_1+1)$ and also  impossible for $B\neq(1,\mu^i_1+1)$ by Remark 5.7. Hence $R_k$ is co-suitable.

 \q Now we have $\mu_{R_k}=(\tau^1|\cdots|\tau^m)$ where $\tau^i=\mu^i$ for $1\leq i<k$, $\tau^k=(\mu_1^k-1,\mu^k_2,\ldots,\mu^k_{h_k})$ and $\tau^i=\mu^i$ for $i>k$. Moreover, it is easy to check that $\tau^1,\tau^2,\ldots,\tau^m$ are the Mullineux components of $\mu_{R_k}$. We also have that $\l(\Mull(\tau^i))=\l(\Mull(\mu^i))$ for all $i$ so that   
$$\l(\Mull(\mu_{R_k}))=\sum_{i=1}^m  \l(\Mull(\tau^i))=\sum_{i=1}^m  \l(\Mull(\mu^i))=\l(\Mull(\mu))$$
and we have a contradiction.

 \bs

 {\it Step 3.}  \sl We have $\mu^i_1=\mu^i_2$, for $1\leq i\leq k$.
 
 \rm\bs

 {\it Proof of Step 3.}   Assume not and   that $s$ is such that $\mu^s_1>\mu^s_2$ but $\mu^i_1=\mu^i_2$ for all $s<i\leq k$. 
We consider the node $R=R_s$. By  the argument of Step 2,  $R$ is co-suitable.

\q Let $a=\mu^s_1$. We claim that $\mu_R$ is $l$-regular. If not then the $(h_1+\cdots+h_{s-1}+1)$th row in the diagram of $\mu$ is followed by $l-1$ rows of length $a-1$.   In particular we have $\mu^s=a(a-1)^{h_s-1}$. Therefore $h_s=l-1$. Moreover, we have $\mu^s_1-\mu^{s+1}_1+h_s=l$ so $\mu^{s+1}_1=a-1$. Hence we have 
$\mu^{s+1}_1=\mu^{s+1}_2=a-1$.  But then $a-1$ is the length of the $l$ rows following the $(h_1+\cdots+h_{s-1}+1)$th row in the diagram of $\mu$. But $\mu$ is $l$-regular so this is impossible and the claim is established. 

\q Now we have $\mu_R=(\tau^1|\cdots|\tau^m)$ where $\tau^i=\mu^i$ for $1\leq i<s$, $\tau^s=(\mu_1^s-1,\mu^s_2,\ldots,\mu^s_{h_s},\mu^{s+1}_1)$, $\tau^i=(\mu^i_2,\ldots,\mu^i_{h_i},\mu^{i+1}_1)$, for $s<i<k$, $\tau^k=(\mu^k_2,\ldots,\mu^k_{h_k})$ and $\tau^i=\mu^i$ for $i>k$. Moreover, it is easy to check that $\tau^1,\tau^2,\ldots,\tau^m$ are the Mullineux components of $\mu_R$. We also have that $\l(\Mull(\tau^i))=\l(\Mull(\mu^i))$ for $i\neq s,k$, $\l(\Mull(\tau^s))=\l(\Mull(\mu^s))-1$ and 
 $\l(\Mull(\tau^k))= \l(\Mull(\mu^k))+1$. Therefore,  
$$\l(\Mull(\mu_R))=\sum_{i=1}^m  \l(\Mull(\tau^i))=\l(\Mull(\mu))$$
and we have a contradiction. 

\bs

{\it Step 4.} \sl Conclusion

\rm 

\bs

\q Let $R$ be a removable node of $\mu$ such that $\mu_R=(\mu^1_S|\mu^2|\cdots|\mu^m)$,  for a removable node $S$ of $\mu^1$. Then by Step  3 we have that $R\neq(1,\mu_1)$. It is easy to check that $R$ is co-suitable and if $\mu_R$ is an $l$-regular partition we have  

 \begin{align*}\l(\Mull(\mu_R))&=\l(\Mull(\mu^1_S))+\l(\Mull(\mu^2))+\dots+\l(\Mull(\mu^m))\cr
 &=\l(\Mull(\mu^1))+\l(\Mull(\mu^2))+\dots+\l(\Mull(\mu^m))\cr
 &=\l(\Mull(\mu)).
 \end{align*}
 Hence, we may assume that $\mu_R$ is not $l$-regular  so we have   $\mu^1=a^u(a-1)^{l-1-u}$, for some $2\leq u\leq l-1$ and $\mu^2=(a-1)^u\mu^2_{u+1}\dots\mu^2_{h_2}$ with $\mu^2_{u+1}<a-1$. 
 
 \q Consider  the  node $R=(l-1+u,a-1)$. This is removable and has residue $a-u$. Moreover the addable nodes above $R$ are  $(1,a+1)$ and $(u+1,a)$ and these have  residues $a$ and $a-u-1$. Hence $R$ is co-suitable. Suppose that $\mu_R$ is $l$-regular. Then we have that $\mu_R=(\mu^1|\mu^2_S|\cdots|\mu^m)$, where $S=(u,a-1)$ and   again \begin{align*}\l(\Mull(\mu_R))&=\l(\Mull(\mu^1))+\l(\Mull(\mu^2_S))+\dots+\l(\Mull(\mu^m))\cr
 &=\l(\Mull(\mu^1))+\l(\Mull(\mu^2))+\dots+\l(\Mull(\mu^m))\cr
 &=\l(\Mull(\mu)).
 \end{align*}
Therefore we must have $\mu^2=(a-1)^u(a-2)^{l-1-u}$ and $\mu^3=(a-2)^u\mu^3_{u+1}\dots\mu^3_{h_3}$ with $\mu^3_{u+1}<a-2$. Continuing in this way,  we may assume that $\mu=(\mu^1|\mu^2|\cdots|\mu^m)$ with $\mu^i=(a-i+1)^u(a-i)^{l-1-u}$ for $1\leq i\leq k-1$ and $\mu^k=(a-k+1)^u\mu^k_{u+1}\dots\mu^k_{h_k}$ and $2\leq u \leq l-1$. 
 
 \q Consider finally  the node $R=((k-1)(l-1)+u,a-k+1)$. This is removable and has residue $a-u$. Moreover the addable nodes above $R$ are  $(1,a+1)$ and $((i-1)(l-1)+u+1,a-i+1)$,  for $1\leq i\leq k-1$,  with residues $a$ and $a-u+1$. Hence $R$ is co-suitable. In addition $\mu_R$ is $l$-regular. We have  $\mu_R=(\mu^1|\cdots|\mu^k_S|\dots|\mu^m)$, where $S=(u,a-k+1)$ and  
 
  \begin{align*}\l(\Mull(\mu_R))&=\l(\Mull(\mu^1))+\dots+\l(\Mull(\mu^k_S))+\dots+\l(\Mull(\mu^m))\cr
 &=\l(\Mull(\mu^1))+\dots+\l(\Mull(\mu^k))+\dots+\l(\Mull(\mu^m))\cr
 &=\l(\Mull(\mu)).
 \end{align*}
 
 Thus $\mu$ is not a counterexample and the proof is complete.

\end{proof}

It will be of great importance, especially for the proof of Lemma 5.9, to review the proof of Lemma 5.8  and give an explicit description of the removable node $R$ we obtain  with the properties of Lemma 5.8.

\bs

\q Let $\mu$ be an $l$-regular partition which is $l$-disconnected. Then by Lemma 5.8 we have that there is a co-suitable node $R$ of $\mu$ such that $\mu_R$ is  $l$-regular and $\l(\m(\mu))=\l(\m(\mu_R))$. By the proof of Lemma 5.8 we have that the node $R$ is obtained in one of two different ways, depending on the shape of $\mu$. We describe explicitly the two situations here. We write $\mu=(\mu^1|\mu^2|\cdots|\mu^m)$ where $\mu^1,\dots,\mu^m$ are the (non-zero) Mullineux components. Let $h_i=\l(\mu^i)$ for $1\leq i\leq m$. Let $k$ be minimal such that $(\mu^1|\cdots|\mu^{k+1})$ is edge $l$-disconnected. Thus we have $\mu^i_1-\mu^{i+1}_1+h_i=l$, for $1\leq i <k$ and $\mu^k_1-\mu^{k+1}_1+h_k>l$.
\bs

{\it Case 1.} Assume  that there is some  $1\leq i\leq k$ with $\mu^i_1>\mu^i_2$. Let  $s=\max\{i\ |\mu^i_1>\mu^i_2, 1\leq i\leq k\}$. Then we have that the co-suitable node $R$ with the above properties is the node $R=R_s=(\sum_{j<s} h_j +1,\mu^s_1)$.

\bs

{\it Case 2.} Assume  that $\mu^i_1=\mu^i_2$ for all $1\leq i\leq k$. Let $t$ be the minimal value of $1\leq i\leq k$ with the property that the Mullineux component $\mu^t$ has a removable node, say $T$, such that if $R$ is the removable node of $\mu$ with $\mu_R=(\mu^1|\mu^2|\cdots|\mu^t_T|\cdots|\mu^k|\cdots|\mu^m)$ then $\mu_R$ is $l$-regular. The existence of this node is guaranteed by the fact that $\mu$ is  $l$-disconnected. In this case it follows that, for some $a$, we have  $\mu^j=(a-j+1)^u(a-j)^{l-u-1}$ for $1\leq j\leq t-1$ and $\mu^t=(a-t+1)^u\mu^t_{u+1}\dots\mu^t_{h_t}$ for some $2\leq u\leq l-1$. Moreover the node $R=((t-1)(l-1)+u,a-t+1)$ is the co-suitable node we obtain  with the desired properties. 

\bigskip\bigskip

{\bf Some further  Definitions, Notations and Remarks}

\medskip

 (i) A {\it weakly addable} node for a partition $\lambda$ is an element of $\nat\times \nat$ which has the form, $(i, \lambda_i+1)$ for some $1\leq i\leq \l(\lambda)$ or $(\l(\lambda)+1,1)$. Observe, that an addable node of $\lambda$ is always a weakly addable node.
 
(ii) Let $\lambda$ be a partition. We write $\lambda$ as usual in the form $\lambda=\lambda^0+l\bar\lambda$, with $\lambda^0$ be $l$-restricted. Let $A=(i,\lambda_i+1)$ be an addable node for $\lambda$ with $1\leq i\leq \l(\lambda^0)+1$. We consider now   $A^0=(i,\lambda^0_i+1)$. This is a weakly addable node for $\lambda^0$ and the nodes $A$ and $A^0$ have the same residue. We will refer to $A^0$ as the weakly addable node of $\lambda^0$ corresponding to the addable node $A$ of $\lambda$.  

 (iii) Let $\lambda=\lambda^0+l\bar\lambda$ be a non-restricted partition. Let $A_0=(i,\lambda^0_i)$ be a removable node of $\lambda^0$ such that $\lambda^0_{A_0}$ is  a restricted partition. Then $A=(i,\lambda_i)$ is a removable node for $\lambda$ and $\lambda_A=\lambda^0_{A_0}+l\bar\lambda$.
 Moreover $A$ and $A_0$ have the same residue.
 
 \begin{lemma}
 
 Let $\lambda=\lambda^0+l \bar\lambda$ be a non-restricted partition with $\l(\bar\lambda)\leq\l(\lambda^0)$. Let $\mu=(\lambda^0)'$. Assume that $\mu$ is edge $l$-disconnected. Then there is  a suitable node $S=(i,\lambda_i)$ of $\lambda$ such that:
 
 (i) the  node $S_0=(i,\lambda^0_i)$ is a suitable node of $\lambda^0$ and $\lambda^0_A$ is $l$-restricted;  and 
 
(ii) the node $R=(\lambda^0_i,i)$ is a co-suitable node of $\mu$ such that  $\mu_R$ is  $l$-regular and $\l(\m(\mu))=\l(\m(\mu_R))$.

 \end{lemma}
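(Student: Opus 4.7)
The plan is to transfer the conclusion of Lemma 5.8 through the transpose $\mu = (\lambda^0)'$ and the decomposition $\lambda = \lambda^0 + l\bar\lambda$. Since $\lambda^0 \in X_1(n)$, its transpose $\mu$ is $l$-regular, and by hypothesis $\mu$ is edge $l$-disconnected, so Lemma 5.8 applies to $\mu$. This yields a co-suitable node $R$ of $\mu$ with $\mu_R$ an $l$-regular partition and $\l(\Mull(\mu)) = \l(\Mull(\mu_R))$, which already gives the length equality in~(ii). Setting $S_0 = R'$, the definition of co-suitable makes $S_0 = (i, \lambda^0_i)$ a suitable node of $\lambda^0 = \mu'$, and $(\lambda^0_{S_0})' = \mu_R$ being $l$-regular forces $\lambda^0_{S_0}$ to be $l$-restricted. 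This completes~(i).

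Next one defines $S = (i, \lambda_i)$. From the identity $\lambda_i = \lambda^0_i + l\bar\lambda_i$ noted in remark~(iii) above, $\res(S) \equiv \res(S_0) \pmod{l}$. To see that $S$ is a removable node of $\lambda$, split on whether $i < \l(\lambda^0)$ or $i = \l(\lambda^0)$. In the first case, removability of $S_0$ gives $\lambda^0_i > \lambda^0_{i+1}$, which combined with $\bar\lambda_i \geq \bar\lambda_{i+1}$ yields $\lambda_i > \lambda_{i+1}$. In the second case, the hypothesis $\l(\bar\lambda) \leq \l(\lambda^0)$ forces $\bar\lambda_{i+1} = 0$, so $\lambda_{i+1} = 0 < \lambda_i$; in particular $\l(\lambda) = \l(\lambda^0)$.

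It remains to verify that $S$ is suitable for $\lambda$. Let $A = (j, \lambda_j + 1)$ be an addable node of $\lambda$ below $S$ (so $j > i$). By remark~(ii) above, the corresponding weakly-addable node $A^0 = (j, \lambda^0_j + 1)$ of $\lambda^0$ satisfies $\res(A) \equiv \res(A^0) \pmod{l}$. If $A^0$ is actually an addable node of $\lambda^0$---which happens unless $\lambda^0_j = \lambda^0_{j-1}$---then suitability of $S_0$ gives $\res(A^0) \neq \res(S_0)$, and hence $\res(A) \neq \res(S)$. The remaining case, in which $\lambda^0_j = \lambda^0_{j-1}$ and $\bar\lambda_j < \bar\lambda_{j-1}$, is the main technical obstacle: one must exhibit a genuine addable node $A'^0$ of $\lambda^0$ below $S_0$ with $\res(A'^0) \equiv \res(A) \pmod{l}$, for then suitability of $S_0$ again closes the argument. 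The resolution relies on the explicit form of $R$ recorded after the proof of Lemma 5.8---in Case 1 via the identity $\res(R_1) = \cdots = \res(R_k)$ established in Step 1 of that proof, and in Case 2 via the staircase shape $\mu^j = (a-j+1)^u(a-j)^{l-u-1}$ of the first $t-1$ Mullineux components of $\mu$---to pin down the residues of addable nodes of $\lambda^0$ along the stretches where consecutive parts coincide, so that each weakly-addable $A^0$ below $S_0$ shares its residue with some genuine addable node of $\lambda^0$ below $S_0$.
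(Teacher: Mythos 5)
Your setup coincides with the paper's: apply Lemma 5.8 to $\mu$, transpose the resulting co-suitable node $R$ to get $S_0$, define $S=(i,\lambda_i)$, check removability, and reduce suitability of $S$ to a residue check on the weakly addable nodes of $\lambda^0$ corresponding (via $\l(\bar\lambda)\leq\l(\lambda^0)$) to addable nodes of $\lambda$ below $S$. You also correctly isolate the only problematic case, namely $\lambda^0_j=\lambda^0_{j-1}$ with $\bar\lambda_j<\bar\lambda_{j-1}$, where the weakly addable node is not a genuine addable node of $\lambda^0$. But at exactly this point --- which is the entire substance of the lemma, and to which the paper devotes essentially its whole proof --- you stop proving and start asserting. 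Saying that the explicit description of $R$ after Lemma 5.8 ``pins down the residues'' is not an argument; the required case analysis is never carried out.

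Moreover, the mechanism you propose for closing the gap is not the right one and there is no reason it should work: you want to show that every weakly addable node $A^0$ of $\lambda^0$ below $S_0$ shares its residue with a \emph{genuine} addable node of $\lambda^0$ below $S_0$, and then invoke suitability of $S_0$. In a block of equal rows of $\lambda^0$ the weakly addable nodes have residues running through a consecutive string of values, while the genuine addable nodes below $S_0$ sit at other corners with unrelated residues, so such a matching is not available in general. What the paper actually does is prove directly that $\res(A^0)\neq\res(S_0)$, by transposing $A^0$ to a node $V$ of $\nat\times\nat$ with $\res(V)$ compared to $\res(R)$, and then running through the possible positions of $V$: when $V$ is an addable node of $\mu$ (e.g.\ $(1,\mu_1+1)$, or all positions arising in Case 2), co-suitability of $R$ excludes the residue coincidence; when $V$ lies in the overhang $(\sum_{j<i}h_j+1,\mu^i_1+k)$ at the top of a Mullineux component, the Step 1 identity $\res(R_1)=\cdots=\res(R_k)$ together with $1\leq k<l$ gives $\res(V)\neq\res(R)$ by a congruence; and when $V$ lies inside the rectangle over a single component, Remark 5.7 applies. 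Until you reproduce an analysis of this kind (or a genuine substitute), your proof has a gap precisely at its crux.
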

 
 \begin{proof}
 
 We will produce the node $S$ of $\lambda$ with the above properties using the co-suitable nodes of $\mu$ described in Lemma 5.8. 
 
 \q There is a co-suitable node $R$ of $\mu$ such that $\mu_R$ is  $l$-regular and\\
  $\l(\m(\mu))=\l(\m(\mu_R))$. By the discussion following the proof of Lemma 5.8 we may produce $R$ according to one of the cases below.

 \q We write $\mu=(\mu^1|\mu^2|\cdots|\mu^m)$ where $\mu^1,\dots,\mu^m$ are 
 the (non-zero) \\
 Mullineux components. Let $h_i=\l(\mu^i)$ for $1\leq i\leq m$. Let $k$ be minimal such that $(\mu^1|\cdots|\mu^{k+1})$ is edge $l$-disconnected. Thus we have $\mu^i_1-\mu^{i+1}_1+h_i=l$, for $1\leq i <k$ and $\mu^k_1-\mu^{k+1}_1+h_k>l$.
 
 \bs

{\it Case 1.} Assume that $\mu^i_1>\mu^i_2$ for some $1\leq i\leq k$ and $s=\max\{i\ |\mu^i_1>\mu^i_2, 1\leq i\leq k\}$. Then we have that $R=R_s=(\sum_{j<s} h_j +1,\mu^s_1)$. We consider first the transpose node $S_0=(\mu^s_1,\sum_{j<s} h_j +1)$ of $\lambda^0$. Since $R_s$ is co-suitable and $\mu_{R_s}$ is $l$-regular we have that $S_0$ is suitable and $\lambda^0_{S_0}$ is $l$-restricted. We take now the node $S=(\mu^s_1,\sum_{j<s} h_j +1+l \bar\lambda_{\mu^s_1})$ of $\lambda$. The node $S$ is removable. Hence, it remains to prove that is also suitable. We assume for a contradiction that it is  not. Then there is an addable node, say $U=(r,\lambda_r)$, of $\lambda$ below $S$ with the same residue as  $S$. Since $\l(\bar\lambda)\leq\l(\lambda^0)$ we can take now the corresponding weakly addable node $U^0=(r,\lambda^0_r+1)$ of $\lambda^0$. We have that $\res(U^0)=\res(U)$. 

\q We consider now the transpose node $V=(\lambda^0_r+1,r)$. We have that $\res(V)=\res(R_s)$. Moreover, since $U^0$ is a weakly addable node of $\lambda^0$ appearing lower than $S_0$ we get that  $V$ can only have one of the following forms:  $V=(1,\mu_1+1)$;  $V=(\sum_{j<i} h_j +1,\mu^i_1+k)$ for some $1< i\leq s$ with $1\leq k\leq \mu^{i-1}_{h_{i-1}}-\mu^i_1$ ; or $V=(\sum_{j<i} h_j +\ell,\mu^i_\ell+k)$ for some $1\leq i<s$ and $2\leq \ell \leq h_i$ with $1\leq k\leq \mu^{i}_{\ell-1}-\mu^i_\ell$. 

\q We can exclude directly the case $V=(1,\mu_1+1)$ because in this case $V$  is an addable node of $\mu$ and since  $\res(V)=\res(R_s)$, this contradicts the fact that $R_s$ is co-suitable.

\q Let $V=(\sum_{j<i} h_j +1,\mu^i_1+k)$ for some $1< i\leq s$ with $1\leq k\leq \mu^{i-1}_{h_{i-1}}-\mu^i_1$. We compare the residue of $V$ with the residue of the node $R_i=(\sum_{j<i} h_j +1,\mu^i_1)$. By Step 1 of the proof of Lemma 5.8 we have that $\res(R_i)=\res(R_s)$ and so $\res(V)=\res(R_i)$. Therefore, we get that  $\mu^i_1-\sum_{j<i} h_j -1$ is $\mu^i_1+k-\sum_{j<i} h_j -1 \ \mod \ l$. Thus $k$ must be congruent to $0 \ \mod\  l$.  However, this  is not the case   since $\mu^{i-1}_{h_{i-1}}-\mu^i_1< \mu^{i-1}_1-\mu^{i}_1+h_{i-1}=l$ and so $1\leq k<l$. Therefore we have a contradiction.

\q We have now the final case where $V=(\sum_{j<i} h_j +\ell,\mu^i_\ell+k)$ for some $1\leq i<s$ and $2\leq \ell \leq h_i$ with $1\leq k\leq \mu^{i}_{\ell-1}-\mu^i_\ell$. We compare the residue of $V$ with the residue of the node $R_i=(\sum_{j<i} h_j +1,\mu^i_1)$. Since $\res(R_i)=\res(R_s)$ we get that $\res(V)=\res(R_i)$. In particular  we deduce that the nodes $(1,\mu^i_1)$ and $(\ell,\mu^i_\ell+k)$ have the same residue. This contradicts the Remark 5.7. Therefore we have that the node $S$ is a suitable node for $\lambda$. 

\bs

\q We examine now the situation where the node $R$ is obtained from the second form of the partition $\mu$ as described in the remarks following Lemma 5.8.

\bs

{\it Case 2.} In this case we have that $\mu^i_1=\mu^i_2$ for all $1\leq i\leq k$. Let $t$ be the minimal value of $1\leq i\leq k$ with the property that the Mullineux component $\mu^t$ has a removable node, say $T$, such that if $R$ is the removable node of $\mu$ with $\mu_R=(\mu^1|\mu^2|\cdots|\mu^t_T|\cdots|\mu^k|\cdots|\mu^m)$, then $\mu_R$ is $l$-regular. Then, for some $a$,  we have  $\mu^j=(a-j+1)^u(a-j)^{l-u-1}$ for $1\leq j\leq t-1$ and $\mu^t=(a-t+1)^u\mu^t_{u+1}\dots\mu^t_{h_t}$ for some $2\leq u\leq l-1$ and the node $R=((t-1)(l-1)+u, a-t+1)$ is the co-suitable node of $\mu$ with the properties of Lemma 5.8. We consider the transpose node $S_0=(a-t+1, (t-1)(l-1)+u)$ of $\lambda^0$. Since $R$ is co-suitable and $\mu_{R}$ is $l$-regular  $S_0$ is suitable and $\lambda^0_{S_0}$ is $l$-restricted.  We take now the node $S=(a-t+1,(t-1)(l-1)+u+l\bar\lambda_{a-t+1})$ of $\lambda$. The node $S$ is removable. Hence, it remains to prove that is also suitable. We assume for contradiction that is not. Then there is an addable node, say $U=(r,\lambda_r)$, of $\lambda$ below $S$ with the same residue with $S$. Since $\l(\bar\lambda)\leq\l(\lambda^0)$ we can take now the corresponding weakly addable node $U^0=(r,\lambda^0_r+1)$ of $\lambda^0$. We have that $\res(U^0)=\res(U)$. 

\q We consider now the transpose node $V=(\lambda^0_r+1,r)$. We have  $\res(V)=\res(R)$. Moreover, since $U^0$ is a weakly addable node of $\lambda^0$ appearing lower than $S_0$ we get that  $V$ can only have one of the following forms:  $V=(1,a+1)$;  or $V=((j-1)(l-1)+u+1,a-j+1)$ for $1\leq j\leq t-1$. Here the node $V$ is always an addable node of $\mu$ and so $\res(V)=\res(R)$ contradicts the fact that $R$ is co-suitable. Therefore we deduce again that the node $S$ of $\lambda$ is a suitable node and the proof is complete.

\end{proof}

\q We finish this section with a Remark which follows immediately from  Lemma 5.4.

\begin{remark}

Let $\lambda=(\lambda_1,\lambda_2,\dots,\lambda_m)$ be an $l$-restricted partition. The simple module $L(\lambda)$ appears as composition factor of  $\nabla(\mu)$, for some partition $\mu$  with $\l(\mu)<m$ if and only if $\l(\core(\lambda))<m$.

\end{remark}

\begin{proof}
Assume first that there is a partition $\mu$ with $\l(\mu)<m$ and $[\nabla(\mu):L(\lambda)]\neq0$. Since $\core(\lambda)=\core(\mu)$ we get immediately that 

$$\l(\core(\lambda))=\l(\core(\mu))\leq \l(\mu)<m.$$

Assume now that $[\nabla(\mu):L(\lambda)]=0$ for every partition $\mu$ with $\l(\mu)<m$. Then by Proposition 3.2 we get that $\l(\m(\lambda'))=m$. Therefore, by the formula for $\l(\m(\lambda'))$ given in the beginning of section 4 we have that 

\begin{equation*}
m=
\begin{cases} 
e_l(\lambda')-\lambda_1, & \text{if $l\mid e_l(\lambda')$};\cr
e_l(\lambda')-\lambda_1+1,  &\text{if $l\nmid e_l(\lambda')$.}
\end{cases}
\end{equation*}

The first case gives that $e_l(\lambda')=\lambda_1+m>e(\lambda')$ which of course is impossible. Hence, we have that only the second case is possible. Therefore, $e_l(\lambda')=m+\lambda_1-1=e(\lambda')$ and $l\nmid e_l(\lambda')$. Thus, $\lambda$ is edge $l$-connected  and $l\nmid e_l(\lambda')$. Therefore by Lemma 5.4 we have that $\core(\lambda')_1=m$ and so $\l(\core(\lambda))=m$. 

\end{proof}


\bigskip\bigskip
\bigskip\bigskip


\section{The Main Results on  Composition Factors}

\begin{proposition} Let $m$ be a positive integer. Suppose that a partition $\lambda$ can be written in the form $\lambda=\lambda(1)+\cdots+\lambda(s)$, where $\lambda(i)$ is an $m_i$-distinguished (not necessarily restricted) partition and $m=m_1+\cdots+m_s$. Then $\lambda$ is $m$-special. 

\end{proposition}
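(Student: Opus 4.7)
The proposition reduces, via Lemma 2.4 applied iteratively to pairs from the decomposition, to the special case $s=1$: a single $m$-distinguished partition $\lambda$ is $m$-special. When $\lambda$ is $l$-restricted this is exactly Corollary 4.7, so the content lies in treating the non-restricted case $\lambda = \lambda^0 + l\barlambda$ with $\barlambda \neq 0$.

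For the non-restricted case I would induct on $\deg(\barlambda)$, the base case $\deg(\barlambda)=0$ being the restricted situation just handled. For the inductive step I would seek a decomposition $\lambda = \mu + \nu$ in which $\mu$ is a restricted $m_1$-distinguished partition (hence $m_1$-special by Corollary 4.7) and $\nu$ is an $m_2$-distinguished partition with $\deg(\barnu) < \deg(\barlambda)$ (hence $m_2$-special by the inductive hypothesis), with $m_1 + m_2 = m$; then Lemma 2.4 produces the desired $m$-specialness of $\lambda$. A natural first candidate is $\mu = (l-1)\omega_{h_0}$, where $h_0$ is the number of indices $i$ with $\barlambda_i = \barlambda_1$, so that the componentwise subtraction $\lambda - \mu$ creates a carry in the first $h_0$ positions which replaces $\barlambda_1$ by $\barlambda_1 - 1$ in $\barnu$, strictly decreasing $\deg(\barnu)$.

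The main obstacle is verifying that the resulting $\nu^0$ actually sits in the correct $m_2$-distinguished shape $(l-m_2)^{k_2}b_1\cdots b_{m_2}$ for a suitable $m_2 = m - m_1$. When $\lambda^0$ has many parts equal to $l-m$ (so the tail of $\nu^0$ is too long), or when $\barlambda_1 = m-1$ (so the carry at position 1 is forced in a delicate way), the naive candidate fails and a case analysis is required. I would resolve these by combining the reciprocity principle (Lemma 3.5 and Lemma 4.12) with the row-removal principle (Proposition 3.6, applied whenever one can arrange $\lambda_1 = m(l-1)$, possibly after reciprocation), both of which preserve $m$-specialness and $m$-distinguished-ness while strictly reducing the inductive quantity. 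The most delicate situations, where no nontrivial decomposition into proper distinguished summands exists, require a direct argument producing $L(\lambda) = L(\lambda^0) \otimes \dotL(\barlambda)^F$ as a composition factor of $\barS(E)^{\otimes m}$ via Steinberg's tensor product theorem together with the structural results of Section 5 (in particular the edge-connectedness and co-suitable node machinery of Lemmas 5.4, 5.8 and 5.9).
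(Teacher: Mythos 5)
Your opening reduction (to $s=1$ via Lemma 2.4) and the restricted case (Corollary 4.7) match the paper exactly. But the heart of the non-restricted case has a genuine gap, and your proposed remedies point to the wrong machinery.

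Your candidate decomposition $\lambda = \mu + \nu$ with $\mu = (l-1)\omega_{h_0}$ ($1$-distinguished) and $\nu = \lambda - \mu$ fails in exactly the situation you flag. Write $\lambda^0 = (l-m)^k a_1\cdots a_m$. Adding $1$ to the first $h_0$ entries of $\lambda^0$ produces $\nu^0 = (l-m+1)^{h_0}(l-m)^{k-h_0}a_1\cdots a_m$ when $h_0 \le k$, which has a tail of length $k - h_0 + m > m-1$, so $\nu^0$ is not in the $(m-1)$-distinguished shape $(l-(m-1))^{k'}b_1\cdots b_{m-1}$. In other words, additive decompositions of $\lambda$ into genuinely distinguished pieces of strictly smaller $m$-value simply do not always exist; the paper itself only uses such a decomposition at the very end, for the boundary subcase $\barlambda_1 = m-1$, $\l(\lambda)\le m$, and there it is a tailored splitting (setting $\nu = \barlambda - \omega_r$ and distributing the $l\omega_r$ and the restricted excess carefully), not your carry-based candidate.

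More seriously, your proposed fallback — invoking the edge-connectedness and co-suitable node machinery of Lemmas 5.4, 5.8, 5.9 — is aimed at the wrong target. Those lemmas are used in the proof of Proposition 6.2 (the converse: $\lambda$ $m$-good $\Rightarrow$ $\lambda^0$ $m$-good), not here. The ingredient your argument actually needs, and which your sketch never mentions, is Proposition 5.3: for $1 < m < l$ and $\lambda^0$ a restricted $m$-distinguished partition, $L(\lambda^0)$ occurs as a composition factor of $L(\alpha)\otimes L(\mu)$ with $\alpha$ restricted $1$-distinguished and $\mu$ restricted $(m-1)$-distinguished. This is a \emph{multiplicative} (composition-factor) split of $L(\lambda^0)$, not an additive split of $\lambda$. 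It is the workhorse of the paper's main case $\barlambda_1 < m-1$: since $\barlambda_1 < m-1$, the partition $\mu + l\barlambda$ is $(m-1)$-distinguished, so by induction on $m$ the module $L(\mu + l\barlambda) = L(\mu)\otimes\dotL(\barlambda)^F$ (Steinberg) sits inside $\barS(E)^{\otimes(m-1)}$, whence $L(\alpha)\otimes L(\mu)\otimes\dotL(\barlambda)^F$ sits inside $\barS(E)^{\otimes m}$ and contains $L(\lambda^0)\otimes\dotL(\barlambda)^F = L(\lambda)$ as a composition factor. Your induction variable ($\deg\barlambda$) also differs from the paper's, which runs on $m$ and then on $\l(\lambda)$ for the $\barlambda_1 = m-1$ subcases via reciprocity (Lemmas 3.4, 4.12, Proposition 3.5); that part of your sketch is at least pointing in the right direction. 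Without Proposition 5.3, though, the central inductive step in your proposal does not close.
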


\begin{proof}  Lemma 2.4 immediately reduces considerations to the case $s=1$. So we assume that $\lambda$ is $m$-distinguished. We write $\lambda=\lambda^0+l\barlambda$ for $\lambda^0,\barlambda\in \Lambda^+(n)$ with $\lambda^0$ restricted and $m$-distinguished and $\barlambda_1<m$. From Corollary 4.7 we may assume that $\barlambda\neq0$. Suppose that $\barlambda_1<m-1$.  Then $L(\lambda^0)$ is a composition factor of $L(\alpha)\otimes L(\mu)$, for $\alpha,\mu\in \Lambda^+(n)$, where  $\alpha$ is restricted,   $1$-distinguished and $\mu$ is  restricted and $(m-1)$-distinguished, by Proposition 5.3. But now, $\mu+l\barlambda$ is $(m-1)$-distinguished so, by induction on $m$, we have that $L(\mu+l\barlambda)=L(\mu)\otimes \dotL(\barlambda)^F$ is a composition factor of $\barS(E)^{\otimes (m-1)}$ and hence $L(\alpha)\otimes L(\mu)\otimes \dotL(\barlambda)^F$ appears as a section of $\barS(E)^{\otimes m}$. But $L(\alpha)\otimes L(\mu)\otimes \dotL(\barlambda)^F$  has a section $L(\lambda^0)\otimes \dotL(\barlambda)^F$ and so $L(\lambda)$ is a composition factor of $\barS(E)^{\otimes m}$. 

\q Thus we may assume  $\barlambda_1=m-1$. Assume that $\l(\barlambda)=  \l(\lambda)$. Then $\lambda$ has final entry at least $l$.  We consider reciprocity with respect to $n=\l(\lambda)$. 
Now 
$\mu=\daggerlambda$ has first entry at most $m(l-1)-l=(m-1)l-m$. Moreover,  $\mu$ is $m$-distinguished, by Lemma 4.12  and writing $\mu=\mu^0+l\barmu$,  for partitions $\mu^0,\barmu$, with $\mu^0$ restricted, we have  $\barmu_1<m-1$. Hence, by the case already considered, $\mu$ is $m$-special and hence by Lemma 3.4, $\lambda$ is $m$-special. 

\q So we now suppose $\l(\barlambda)=r<\l(\lambda)$.  If $\lambda^0_1=l-m$ then $\lambda_1=l-m+l(m-1)=m(l-1)$ so again by reciprocity   with respect to $n=\l(\lambda)$ we obtain a partition $\mu=\lambda^\dagger$ of shorter length. By induction on length we may assume that  $\mu$ is $m$-special and hence, by Proposition 3.5, $\lambda$ is $m$-special. 

\q Thus we may assume that $\l(\lambda)\leq m$ and  $\lambda=(a_1,\ldots,a_m)+l\barlambda$, with $l-m>a_1\geq \cdots\geq a_m\geq 0$ and $\barlambda_1=m-1$, $\barlambda=(\barlambda_1,\ldots,\barlambda_r)$, $0<r<m$.  We set $\nu=\barlambda-\omega_r$.  Then we have

\begin{align*}\lambda&=(l+a_1,\ldots,l+a_r,a_{r+1},\ldots,a_m)+l\nu\cr
&=(l-m+r,\ldots,l-m+r,a_{r+1},\ldots,a_m)\cr
&+(a_1+m-r,\ldots,a_r+m-r)  +l\nu.
\end{align*}

Now $\nu_1=m-2$ so we can write $\nu=\alpha+\beta$ for partitions $\alpha,\beta$ with $\alpha_1=m-r-1$, $\beta_1=r-1$.  Then 
$(l-m+r,\ldots,l-m+r,a_{r+1},\ldots,a_m)+l\alpha$ is $(m-r)$-distinguished and hence $(m-r)$-special and $(a_1+m-r,\ldots,a_r+m-l)+l\beta$ is $r$-distinguished  and hence $r$-special. Hence $\lambda$ is the sum of an  $(m-r)$-special and an  $r$-special partition   and hence,  by Lemma 2.4,  is $m$-special.

\end{proof}

\begin{proposition}  Let $\lambda$ be a partition and write $\lambda=\lambda^0+l\barlambda$, for partitions $\lambda^0,\barlambda$, with $\lambda^0$ restricted.  If $\lambda$ is $m$-good then $\lambda^0$ is $m$-good.

\end{proposition}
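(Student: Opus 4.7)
The plan is to combine Proposition 2.8 with Steinberg's tensor product theorem to identify Steinberg restricted parts, and then reduce to the restricted case handled by Proposition 3.2.

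First, since $\lambda$ is $m$-good, Proposition 2.8 yields partitions $\mu$ ($m$-special) and $\tau$ ($m$-good for $\dotG$) such that $[L(\mu) \otimes \dotL(\tau)^F : L(\lambda)] \neq 0$. Writing the Steinberg decomposition $\mu = \mu^0 + l\barmu$ with $\mu^0$ restricted, Steinberg's tensor product theorem (1.3.1) rewrites
\[
L(\mu) \otimes \dotL(\tau)^F \;=\; L(\mu^0) \otimes \bigl(\dotL(\barmu) \otimes \dotL(\tau)\bigr)^F.
\]
Every composition factor of the right-hand side has Steinberg form $L(\mu^0 + l\sigma)$ with $\dotL(\sigma)$ a composition factor of $\dotL(\barmu) \otimes \dotL(\tau)$. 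Matching against $L(\lambda) = L(\lambda^0) \otimes \dotL(\barlambda)^F$ (both $\mu^0$ and $\lambda^0$ restricted) and invoking uniqueness of the Steinberg decomposition forces $\mu^0 = \lambda^0$. Thus $\mu = \lambda^0 + l\barmu$ is $m$-special.

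It remains to show the restricted partition $\lambda^0$ is $m$-good; by Proposition 3.2 this is equivalent to showing $\lambda^0$ is $m$-special. I would argue by strong induction on $|\barmu|$. The base case $\barmu = 0$ is immediate, since then $\mu = \lambda^0$ is itself $m$-special. For the inductive step, $\mu$ is $m$-special hence also $m$-good (the truncated algebra $\barS(E)^{\otimes m}$ is a quotient of $S(E)^{\otimes m}$), so a further application of Proposition 2.8 to $\mu$ produces a pair $(\mu', \tau')$, and the same Steinberg analysis yields $(\mu')^0 = \mu^0 = \lambda^0$ subject to the degree identity $|\barmu| = |\overline{\mu'}| + |\tau'|$. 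If such a pair can be chosen with $\tau' \neq 0$, then $|\overline{\mu'}| < |\barmu|$, and the inductive hypothesis applied to $\mu'$ finishes the proof.

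The main obstacle is the degenerate case in which every valid decomposition produced by Proposition 2.8 applied to $\mu$ is the trivial one $(\mu' = \mu, \tau' = 0)$, preventing the induction from reducing $|\barmu|$. In this case a direct structural argument is needed: one might combine the block-theoretic observation that $\lambda$ and $\lambda^0$ share the same $l$-core (visible on the $l$-abacus, where each shift by $l\barlambda_i$ moves beads within their own runners and hence preserves the bead counts that determine the core) with repeated application of the row- and node-removal principles (Proposition 3.6 and Lemma 3.11) to reduce $\mu$ step-by-step down to $\lambda^0$ while preserving $m$-goodness throughout.
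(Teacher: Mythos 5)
Your opening reduction is correct: Proposition 2.8 together with Steinberg's tensor product theorem does produce an $m$-special partition with the same restricted part, since every composition factor of $L(\mu^0)\otimes(\dotL(\barmu)\otimes\dotL(\tau))^F$ has the form $L(\mu^0+l\sigma)$ and uniqueness of the decomposition into restricted part plus $l$ times a dominant weight forces $\mu^0=\lambda^0$. But this only reduces Proposition 6.2 to the statement \lq\lq if $\mu$ is $m$-special then $\mu^0$ is $m$-special'' --- which is exactly Corollary 6.3, a statement the paper deduces \emph{from} Proposition 6.2, not the other way round --- and your induction on $\deg(\barmu)$ does not close. The inductive step needs, for every non-restricted $m$-special $\mu$, a decomposition coming from Proposition 2.8 with $\tau'\neq 0$; the multiplicity identity in Proposition 2.8 never guarantees this, because the trivial pair $(\mu',\tau')=(\mu,0)$ always contributes and nothing in the formula rules out that it is the only contribution. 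So the \lq\lq degenerate case'' you flag is not a boundary case to be patched: it is precisely the entire content of the proposition, and your argument establishes nothing beyond the trivial base case $\barmu=0$.

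The fallback you sketch does not fill this hole. The observation that $\lambda$ and $\lambda^0$ have the same $l$-core is true (the beta-numbers $\lambda_i+n-i$ and $\lambda^0_i+n-i$ agree modulo $l$), but core data is far too coarse: by Proposition 3.2 the quantity that must be bounded is $\l(\Mull((\lambda^0)'))$, and having a short core does not bound it (Remark 5.10 only translates the bound into a core condition once one already knows the relevant goodness). Likewise \lq\lq repeated row- and node-removal'' is indeed the paper's strategy, but making it work is where all the difficulty lies: one needs a removable node that is simultaneously suitable for $\lambda$ (so Lemma 3.11 preserves $m$-goodness), suitable for $\lambda^0$ with $\lambda^0_{S_0}$ still restricted, and whose transpose is a co-suitable node $R$ of $(\lambda^0)'$ with $(\lambda^0)'_R$ $l$-regular and $\l(\Mull(\cdot))$ unchanged. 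Producing such a node is Lemma 5.9, which rests on the edge-connectivity and Mullineux-component analysis of Lemmas 5.4, 5.5 and 5.8, and the paper's proof of Proposition 6.2 is then a minimal-counterexample argument in three cases (edge $l$-connected with $l\nmid e_l$, edge $l$-connected with $l\mid e_l$, and edge $l$-disconnected) built on exactly that machinery, together with the preliminary reduction to $\l(\barlambda)\leq\l(\lambda^0)$ by row removal. None of this is recovered by your sketch, so the proposal has a genuine gap at its central step.
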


\begin{proof}

If  not, let  $\lambda=\lambda^0+l\barlambda$ be a counterexample of minimal degree.  We have that $\l(\bar\lambda)\leq \l(\lambda^0)$.  We see this in the following way. Let $\lambda=(\lambda_1,\lambda_2,\dots,\lambda_n)$ and suppose  $\l(\lambda^0)<\l(\bar\lambda)$. Let $\hat\lambda=(\lambda_1,\lambda_2,\dots,\lambda_{n-1})$. By Proposition 3.6 $\hat\lambda$ is $m$-good and by the minimality of the degree of $\lambda$ we get that $\hat\lambda^0$ is $m$-good. Since $\l(\lambda^0)<\l(\bar\lambda)$ we have that $\hat\lambda^0=\lambda^0$ and so $\lambda^0$ is $m$-good and  $\lambda$ is not a  counterexample. Therefore we assume from now on that $\l(\bar\lambda)\leq \l(\lambda^0)$.

\q We have  $\l(\lambda^0)\geq m+1$, for example by Lemma 2.2. We set  $\mu=(\lambda^0)'$. Hence, $\mu$ is an  $l$-regular partition with $\mu_1\geq m+1$. Moreover, since $\lambda^0$ is not $m$-good, we get by the Proposition 3.3,  that $\l(\m(\mu))\geq m+1$.

\medskip

{\it Case  1. }    Assume that $\E_l(\mu)$ is connected and that $l\nmid e_l(\mu)$. Then by Lemma  5.4 we have that $\core(\mu)_1=\mu_1\geq m+1$ and so $\l(\core(\lambda^0))\geq m+1$. But then $\l(\core(\lambda))\geq m+1$, contradicting the fact that $\lambda$ is $m$-good.

\medskip

{\it Case 2.  }   Assume now that $\E_l(\mu)$ is connected and $l\mid e_l(\mu)$. Let $\tilde\lambda$ be the partition obtained from $\lambda$ by  first row removal. Then by Proposition 3.6 we have that $\tilde\lambda$ is $m$-good and by the minimality of degree we have that $\tilde\lambda^0$ is $m$-good. Let $\tilde\mu$ the transpose of $\tilde\lambda^0$. Hence $\tilde\mu$ is the partition obtained from $\mu$ after removing the first column. Therefore we get by Lemma 5.5 that $\l(\m(\tilde\mu))=\l(\m(\mu))\geq m+1$, contradicting the fact that $\tilde\lambda^0$ is $m$-good.

\medskip

{\it Case 3. }  Therefore, we are left with the case in which  $\E_l(\mu)$ is disconnected.  By Lemma 5.9 there is a suitable node $S=(i,\lambda_i)$ of $\lambda$ with the following properties: $S_0=(i,\lambda^0_i)$ is a suitable node of $\lambda^0$; $\lambda^0_{S_0}$ is $l$-restricted; the node $R=(\lambda^0_i,i)$ is a co-suitable node of $\mu$; $\mu_R$ is an $l$-regular partition and; $\l(\m(\mu_R))=\l(\m(\mu))\geq m+1$. We consider these three nodes here. Since $S$ is a suitable node of $\lambda$ we have that $\lambda_S$ is $m$-good by Lemma 3.11. We write $\lambda_S=\lambda^0_{S_0}+l\barlambda$. By the minimality of the degree of $\lambda$ we get that $\lambda^0_{S_0}$ is $m$-good. We have that $(\lambda^0_{S_0})'=\mu_R$. Hence, $\l(\m(\mu_R))\leq m$ by Proposition 3.2. Therefore we have  a contradiction.

\end{proof}

\begin{corollary}
Let $\lambda$ be a partition and write $\lambda=\lambda^0+l\barlambda$, for partitions $\lambda^0,\barlambda$, with $\lambda^0$ restricted.  If $\lambda$ is $m$-special then $\lambda^0$ is $m$-special.

\end{corollary}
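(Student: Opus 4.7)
The plan is to bootstrap the corollary directly from Proposition 6.2 together with the equivalence of $m$-good and $m$-special for restricted partitions recorded in Proposition 3.2.

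First I would observe that every $m$-special partition is $m$-good. Indeed, $\barS(E)=S(E)/I$ is a quotient $G(n)$-module of $S(E)$, so the surjection $S(E)\to\barS(E)$ yields a surjection of $G(n)$-modules $S(E)^{\otimes m}\to\barS(E)^{\otimes m}$. Any composition factor of the target is therefore a composition factor of the source. Hence, if $\lambda$ is $m$-special, then in particular $\lambda$ is $m$-good.

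Next, I would apply Proposition 6.2 to $\lambda$: writing $\lambda=\lambda^0+l\barlambda$ with $\lambda^0$ restricted, the $m$-goodness of $\lambda$ yields the $m$-goodness of $\lambda^0$.

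Finally, since $\lambda^0$ is restricted, Proposition 3.2 gives the equivalence of $m$-good and $m$-special for $\lambda^0$. Hence $\lambda^0$ is $m$-special, as required. No step here looks like a genuine obstacle: all the work is absorbed into Proposition 6.2 (the analogous result for $m$-good) and into Proposition 3.2 (which removes the distinction between $m$-good and $m$-special in the restricted range).
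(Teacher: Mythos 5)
Your proof is correct and follows exactly the same route as the paper: note that $m$-special implies $m$-good, apply Proposition 6.2 to get that $\lambda^0$ is $m$-good, then use the equivalence in Proposition 3.2 for restricted partitions to conclude $\lambda^0$ is $m$-special. The only cosmetic difference is that you spell out the surjection $S(E)^{\otimes m}\to\barS(E)^{\otimes m}$ justifying the first step, which the paper leaves implicit.
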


\begin{proof}

Let $\lambda$ be $m$-special, then it is $m$-good and so by Proposition 6.2 we have that $\lambda^0$ is $m$-good. By Proposition 3.2 we have then that $\lambda^0$ is also $m$-special.

\end{proof}

\begin{proposition}    Let $m$ be a positive integer.  Let $\lambda$ be a partition and write $\lambda=\lambda^0+l\barlambda$ for partitions $\lambda^0,\barlambda$, with $\lambda^0$ restricted.  Suppose $\lambda^0$ is $m$-special and $\lambda_1\leq m(l-1)$. Then $\lambda$ can be written in the form $\lambda=\lambda(1)+\cdots+\lambda(s)$, where $\lambda(i)$ is an $m_i$-distinguished (not necessarily restricted) partition and $m=m_1+\cdots+m_s$.  In particular $\lambda$ is $m$-special.

\end{proposition}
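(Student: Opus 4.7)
The ``in particular'' assertion is immediate from the decomposition together with Proposition 6.1, so the task is to produce the expression $\lambda=\lambda(1)+\cdots+\lambda(s)$.

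My plan is to apply Proposition 4.10 to the restricted partition $\lambda^0$, obtaining $\lambda^0=\mu^0(1)+\cdots+\mu^0(t)$ where each $\mu^0(i)$ is restricted $m_i$-distinguished and $m=m_1+\cdots+m_t$, and then to lift this to a decomposition of $\lambda$ by distributing $\bar\lambda$. Concretely, I look for partitions $\bar\mu(1),\ldots,\bar\mu(t)$ with $\bar\mu(i)_1<m_i$ and $\bar\lambda=\sum_i\bar\mu(i)$; then $\lambda(i)=\mu^0(i)+l\bar\mu(i)$ is $m_i$-distinguished by Definition 4.1, and summing gives $\lambda=\sum_i\lambda(i)$. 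The distribution is produced by a greedy column-grouping (take the first $m_1-1$ columns of $\bar\lambda$ as $\bar\mu(1)$, the next $m_2-1$ as $\bar\mu(2)$, and so on), which succeeds provided $\bar\lambda_1\leq\sum_i(m_i-1)=m-t$. To arrange $\sum_i m_i=m$ when the natural Mullineux-components total $m^{*}=\ell(\mathrm{Mull}((\lambda^0)'))$ is strictly smaller than $m$, I exploit that the empty partition is $m'$-distinguished for every $1\leq m'\leq l-1$, padding the decomposition as required.

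The hypothesis $\lambda_1\leq m(l-1)$ enters as follows. Writing $\lambda_1=\lambda^0_1+l\bar\lambda_1$, the bound rearranges to $m-\bar\lambda_1\geq\lceil(m+\lambda^0_1)/l\rceil$. On the other hand, since each $\mu^0(i)_1\leq l-m_i$ one has $\lambda^0_1\leq tl-m$ in \emph{any} restricted distinguished decomposition, forcing $t\geq\lceil(m+\lambda^0_1)/l\rceil$. Thus the hypothesis exactly matches the combinatorial constraint, and the whole argument reduces to showing that this theoretical lower bound on $t$ is attained by some decomposition of $\lambda^0$. I would establish this through the case analysis in the proof of Proposition 4.10 (the cases $e_l(\mu^\tau)=l$ versus $e_l(\mu^\tau)<l$ for the last Mullineux component $\mu^\tau$): in the former case the natural decomposition already achieves $\lambda^0_1+m^{*}=\tau l$; in the latter case there is slack in the last piece $\mu^0(\tau)$ that allows the excess $m-m^{*}$ to be absorbed by enlarging $m_\tau$, which is permissible precisely because $\mu^0(\tau)$ has $k_\tau=0$ in the distinguished form (i.e.\ its first entry $\ell(\mu^\tau)$ is strictly less than $l-m_\tau$).

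The principal obstacle is the combinatorial bookkeeping in the intermediate regime where neither pure padding with a single $\emptyset$-piece nor pure absorption into $\mu^0(\tau)$ suffices by itself: one must verify that an appropriate combination of the two keeps $t$ at or below $\lceil(m+\lambda^0_1)/l\rceil$, matching the bound delivered by the hypothesis. Once this is settled, the greedy distribution of $\bar\lambda$ and Definition 4.1 immediately yield the required sum.
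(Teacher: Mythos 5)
Your construction $\lambda(i)=\mu^0(i)+l\bar\mu(i)$, with $\sum_i\mu^0(i)=\lambda^0$ and $\sum_i\bar\mu(i)=\bar\lambda$, confines you to decompositions in which the restricted parts of the pieces add up \emph{without carrying} to $\lambda^0$. The proposition genuinely requires decompositions outside this class, so the reduction fails, not just the bookkeeping at the end. Concretely, take $l=3$, $m=3$, $\lambda=(6)$: then $\lambda^0=0$ (which is $m$-special) and $\bar\lambda=(2)$, and $\lambda_1=6=m(l-1)$, so all hypotheses hold. In your framework every piece has $m_i\leq l-1=2$, hence any admissible system has $t\geq 2$ pieces and $\sum_i(m_i-1)=m-t\leq 1<2=\bar\lambda_1$, so no distribution of $\bar\lambda$ exists; yet the proposition is true, e.g.\ $(6)=(2)+(2)+(2)$ with three $1$-distinguished pieces (or $(6)=(4)+(2)$ with a $2$- and a $1$-distinguished piece). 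In both valid decompositions the restricted parts of the pieces sum to something congruent to, but different from, $\lambda^0$ --- exactly the carrying your set-up forbids. The same failure already occurs for $l=2$, where all $m_i=1$ forces every $\bar\mu(i)=0$ although $\bar\lambda$ need not vanish (e.g.\ $\lambda=(4,2)$, $m=4$). Relatedly, your claim that the hypothesis ``exactly matches the combinatorial constraint'' is wrong: besides $t\geq\lceil(m+\lambda^0_1)/l\rceil$ there is the constraint $t\geq\lceil m/(l-1)\rceil$ coming from $m_i\leq l-1$, and the hypothesis $\lambda_1\leq m(l-1)$ only delivers $m-\bar\lambda_1\geq\lceil(m+\lambda^0_1)/l\rceil$, which can be strictly smaller, as the example shows.

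The paper's proof is structured precisely to escape this trap. It inducts on degree via $\mu=\lambda-l\omega_r$ and then manipulates an arbitrary admissible expression $\mu=\mu(1)+\cdots+\mu(t)$: Step 1 rules out slack in the $l$-parts, Step 2 uses $\lambda_1\leq m(l-1)$ to guarantee two pieces with $\mu(i)^0_1<l-k_i$, and Steps 3--4 repeatedly transfer $\omega_u$ and $l\omega_u$ between pieces while changing their distinguished indices $k_i$. Transferring $l\omega_u$ is exactly the move that redistributes restricted and non-restricted content across pieces (i.e.\ creates carrying), which your fixed pairing of a decomposition of $\lambda^0$ with a column-splitting of $\bar\lambda$ cannot simulate. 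If you want to salvage your plan you would have to allow the pieces' restricted parts to sum to $\lambda^0+l\nu$ for varying $\nu$ and control that interaction, at which point you are re-doing the paper's index/defect argument rather than avoiding it.
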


\begin{proof}     By an admissible pair of sequences for a partition  $\mu$ we mean a sequence 
  $(k_1,\ldots,k_t)$ of positive integers whose sum is $m$ and a sequence  
$(\mu(1),\ldots,\mu(t))$ of partitions whose sum is $\mu$ and  such that $\mu(i)$ is \\
$k_i$-distinguished, for $1\leq i\leq t$.  Less formally, we shall say that $\mu=\mu(1)+\cdots+\mu(t)$ is an admissible expression for $\mu$. We shall write $\mu(i)^0$ for the restricted part and $\barmu(i)$ for the non-restricted part, i.e., $\mu(i)^0$ and $\barmu(i)$ are partitions, with $\mu(i)^0$ restricted,  such that $\mu(i)=\mu(i)^0+l\barmu(i)$, for $1\leq i\leq t$.

\q Suppose that the result  is false and that $\lambda$ is a partition of minimal degree for which it fails. By Proposition 4.10, $\lambda$ is not restricted, i.e, $\barlambda\neq 0$. .  We choose $r>0$ such that $\barlambda-\omega_r$ is a partition. We put 
$$\mu=\lambda-l\omega_r=\lambda^0+l\barmu$$
where $\barmu=\barlambda-\omega_r$. By minimality, $\mu$ is writable in the required form. 

\bs

\it Step 1.   \rm  If   $\mu=\mu(1)+\cdots+\mu(t)$ is an admissible expression for $\mu$, with $\mu(i)$ a $k_i$-distinguished partition, for $1\leq i\leq t$, then $\barmu(i)_1=k_i-1$ for  $1\leq i\leq t$.
\bs

\it Proof of Step 1. \rm If not then for some $j$ we have $\barmu(j)_1<k_j-1$. Now putting 
$$\lambda(i)=\begin{cases} \mu(i), & {\rm if}\q  i\neq j;\cr
\mu(j)+l\omega_r,  & {\rm  if}\q  i=j
\end{cases}$$
we have that each $\lambda(i)$ is $k_i$-distinguished and $\lambda=\lambda(1)+\cdots+\lambda(t)$, contrary to assumption. 

\bs

\it Step 2.  \rm If   $\mu=\mu(1)+\cdots+\mu(t)$ is an admissible expression for $\mu$, with $\mu(i)$ a $k_i$-distinguished partition, for $1\leq i\leq t$ then we have $\mu(i)^0_1<l-k_i$ for  at least two values of $i$ (with $1\leq i\leq t$).

\bs

\it Proof of Step 2.  \rm If not then, after reordering, we can assume that $\mu(i)^0_1=l-k_i$ for $1\leq i\leq t-1$ so we get 
\begin{align*}\lambda_1=&l+\mu_1\cr
\geq & l+ (l-k_1)+l(k_1-1)+\cdots+ (l-k_{t-1})+l(k_{t-1}-1)+  l(k_t-1)\cr
&=l+(t-1)l-(m-k_t)+lm-lt=m(l-1)+k_t
\end{align*}
contrary to the fact that $\lambda_1\leq m(l-1)$.

\bs

 \q From now on we take $t$ to be minimal such that there exists an admissible expression  $\mu=\mu(1)+\cdots+\mu(t)$.
 
 \bs
 
 \it Step 3. \rm\,  There exists an admissible expression  $\mu=\mu(1)+\cdots+\mu(t)$   with  $\mu(i)^0=0$ for some $1\leq i\leq t$.
 
 \bs
 
 \it Proof of Step 3.  \rm \, Given an admissible pair $S$, say,  for $\mu$,  consisting of the sequence  
 $(k_1,\ldots,k_t)$ of positive integers (whose sum is $m$) and sequence $(\mu(1),\ldots,\mu(t))$ or partitions, we define  ${\rm index}(S)$ to be the minimum of the set $\{k_i\vert 1\leq i\leq t, \mu(i)^0_1< l-k_i\}$.  We consider admissible pairs  for $\mu$ whose index $h$,  say,  is as small as possible.  For such an admissible pair $S$ we define the defect  $d(S)$ to be the minimum of the set $\{\mu(i)^0_1 \vert \mu(i)^0_1< l-k_i, k_i=h\}$.  We further assume that $S$ is such that the defect of $S$ is as small as possible.  If $d(S)=0$ then we are done so we assume that $S$ has positive defect. 
 
 \q We arrange the terms in the admissible expression $\mu=\mu(1)+\mu(2)+\cdots+\mu(t)$ such that $k_1=h$, $d(S)=\mu(1)^0_1$ and $\mu(2)^0_1<l-k_2$ (using Step 2).  Note  that $k_1\leq k_2$ by minimality of the index.  We choose $u>0$ such that $\mu(1)^0-\omega_u$ is a partition.   Now by the definition of distinguished and the fact that $\mu(1)^0_1<l-k_1$ we have that $u\leq k_1$. Since $\mu(2)^0_1<l-k_2$ and $k_1\leq k_2$ we have that $\mu(2)+\omega_u$ is $k_2$-distinguished.  But now,  setting
 $$\nu(i)=\begin{cases} \mu(1)-\omega_u, & \hbox{ if } i=1;\cr
 \mu(2)+\omega_u, & \hbox{ if } i=2;\cr
 \mu(i), & \hbox{ otherwise.}
 \end{cases}
 $$
 we obtain an expression $\mu=\nu(1)+\cdots+\nu(t)$ and the corresponding admissible pair $T$,  say, has index equal to the index of $S$ (namely $h$) and smaller defect, a contradiction.

 \bs
 
 \it Step 4. \rm \, Conclusion.
 
 \bs
 
\q \rm We write $\mu=\mu(1)+\cdots+\mu(t)$ as in Step 3 and arrange the  numbering so that $\mu(1)^0=0$ and $\mu(2)^0_1<l-k_2$. If $k_1=1$ then $\bar\mu(1)_1=0$ so that $\mu(1)=0$, contradicting the minimality of $t$. Thus we have $k_1>1$. We choose $u>0$ such that $\barmu(1)-\omega_u$ is a partition. Then $\mu(1)-l\omega_u$ is $(k_1-1)$-distinguished and $\mu(2)+l\omega_u$ is $(k_2+1)$-distinguished. Moreover, we have that 
 $$\mu=(\mu(1)-l\omega_u)+(\mu(2)+l\omega_u)+\mu(3)+\cdots+\mu(r)$$
 is a an admissible expression for $\mu$.  Continuing in this way, we can find an admissible expression as above with  $k_1=1$,  a contradiction. 
 
 \bs
 
 \q The proof that $\lambda$ may be written in the required form is complete. We get that $\lambda$ is $m$-special from Proposition 6.1.

\end{proof}

\q We now put together Propositions 6.1 and 6.4 and Corollary 6.3 to give the main result of the paper.

\begin{theorem} Let $m$ be a positive integer. A partition $\lambda$ is $m$-special if and only if it can be written in the form $\lambda=\lambda(1)+\cdots+\lambda(s)$, where $\lambda(i)$ is an $m_i$-distinguished (not necessarily restricted) partition and $m=m_1+\cdots+m_s$.

\end{theorem}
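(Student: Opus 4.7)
The strategy is to combine Propositions 6.1 and 6.4 with Corollary 6.3. The backward implication is immediate: if $\lambda = \lambda(1) + \cdots + \lambda(s)$ with each $\lambda(i)$ a $k_i$-distinguished partition and $m = k_1 + \cdots + k_s$, then $\lambda$ is $m$-special by Proposition 6.1. All the substance lies in the forward direction, and there the outcome we need (the existence of the decomposition into distinguished pieces) is exactly the conclusion of Proposition 6.4. So our task reduces to checking that the hypotheses of Proposition 6.4 are satisfied whenever $\lambda$ is $m$-special.

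For the forward direction, suppose $\lambda$ is $m$-special. The first step will be to verify the weight bound $\lambda_1 \leq m(l-1)$. This follows because $L(\lambda)$ is a composition factor of $\barS(E_n)^{\otimes m}$, so the partition $\lambda$ occurs as a weight of this module; but the explicit monomial basis of $\barS(E_n)$ from Section 2 (images of $e_1^{r_1}\cdots e_n^{r_n}$ with $0\leq r_i\leq l-1$) shows that every weight of $\barS(E_n)$ has all coordinates in $\{0,1,\ldots,l-1\}$, hence every weight of $\barS(E_n)^{\otimes m}$ has all coordinates at most $m(l-1)$. In particular $\lambda_1\leq m(l-1)$.

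Next I would write $\lambda = \lambda^0 + l\barlambda$ with $\lambda^0$ restricted. By Corollary 6.3, the restricted part $\lambda^0$ is itself $m$-special. With both $\lambda^0$ being $m$-special and $\lambda_1\leq m(l-1)$ now in hand, Proposition 6.4 applies directly and produces the desired decomposition $\lambda = \lambda(1) + \cdots + \lambda(s)$ with each $\lambda(i)$ a $k_i$-distinguished partition and $m = k_1+\cdots+k_s$. There is essentially no hard step here, since the genuine combinatorial and representation-theoretic work is already done in Propositions 6.1, 6.2 and 6.4; the only point requiring a moment's care is the weight bound, and that is read off immediately from the explicit basis of the truncated symmetric algebra.
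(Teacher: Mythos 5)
Your proposal is correct and follows the paper's own proof essentially verbatim: the backward direction is Proposition 6.1, and the forward direction combines Corollary 6.3 (restricted part is $m$-special) with the weight bound $\lambda_1\leq m(l-1)$ (read off from the explicit basis of $\barS(E)$) to invoke Proposition 6.4. The only difference is that you spell out in slightly more detail why the weight bound holds, which the paper states in one line; the logic is the same.
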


\begin{proof}  A partition that is writable in the above form is $m$-special by Proposition 6.1.  Suppose now that $\lambda$ is $m$-special. Then, writing $\lambda=\lambda^0+l\barlambda$ for partitions $\lambda^0$, $\barlambda$, with $\lambda^0$ restricted, we have that $\lambda^0$ is $m$-special, by Corollary 6.3. Moreover, since the simple module $L(\lambda)$ is a composition factor of $\barS(E)^{\otimes m}$, we have $\lambda_1\leq m(l-1)$. Hence $\lambda$ is writable in the required form, by Proposition 6.4.

\end{proof}

\begin{corollary}

Let $m$ be a positive integer. A partition $\lambda$ is $m$-special if and only if it is $m$-good and $\lambda_1\leq m(l-1)$.

\end{corollary}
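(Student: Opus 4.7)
The proof is a short assembly of results already available. For the forward implication ($\Rightarrow$), if $\lambda$ is $m$-special then by the definition $L(\lambda)$ is a composition factor of $\barS(E)^{\otimes m}$. Since $\barS(E)=S(E)/I$ is a quotient of $S(E)$ as a $G$-module, $\barS(E)^{\otimes m}$ is a quotient of $S(E)^{\otimes m}$, so $L(\lambda)$ appears in $S(E)^{\otimes m}$ as well, i.e.\ $\lambda$ is $m$-good. For the bound on $\lambda_1$, I would recall the explicit basis of $\barS(E_n)$ given at the start of Section 2: every weight of $\barS(E_n)$ has each coordinate at most $l-1$, so every weight of $\barS(E_n)^{\otimes m}$ has each coordinate (in particular the first) at most $m(l-1)$; hence $\lambda_1\le m(l-1)$.

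For the reverse implication ($\Leftarrow$), suppose $\lambda$ is $m$-good with $\lambda_1\le m(l-1)$, and write $\lambda=\lambda^0+l\barlambda$ with $\lambda^0$ restricted. By Proposition 6.2, $\lambda^0$ is $m$-good; and since $\lambda^0$ is restricted, Proposition 3.2 upgrades this to $\lambda^0$ being $m$-special. Now the hypothesis $\lambda_1\le m(l-1)$ is exactly what is needed to apply Proposition 6.4, which yields that $\lambda$ itself is $m$-special.

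There is no real obstacle: the corollary is essentially an immediate packaging of Theorem 6.5 (or equivalently of Propositions 6.2 and 6.4 together with Proposition 3.2), and the only point to verify by hand is the elementary weight bound for $\barS(E_n)^{\otimes m}$ in the forward direction.
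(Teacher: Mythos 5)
Your proof is correct and follows essentially the same route as the paper's: the reverse direction is the identical chain of Proposition 6.2, Proposition 3.2, and Proposition 6.4. The only difference is that you spell out the (easy) forward direction — $\barS(E)^{\otimes m}$ as a quotient of $S(E)^{\otimes m}$, plus the weight bound — which the paper asserts without comment.
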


\begin{proof}

If $\lambda$ is $m$-special then it is $m$-good and $\lambda_1\leq m(l-1)$.  We now assume  that $\lambda$ is $m$-good and that $\lambda_1\leq m(l-1)$. We write $\lambda$ as $\lambda=\lambda^0+l\bar\lambda$ for partitions $\lambda^0,\barlambda$ with $\lambda^0$ restricted. Then, by Proposition 6.2 we have that $\lambda^0$ is $m$-good and so $m$-special by Proposition 3.2. Hence, since $\lambda_1\leq m(l-1)$, we get by Proposition 6.4 that $\lambda$ is $m$-special.

\end{proof}

\q We now consider  $\lambda\in \Lambda^+(n)$ and apply the above in the case $m=n$.

\begin{corollary}  Let  $\lambda\in \Lambda^+(n)$. Then $\lambda_1\leq n(l-1)$ if and only if  we can write $n$ as a sum of positive integers $n_1,\ldots,n_s$ and  $\lambda=\lambda(1)+\cdots+\lambda(s)$ with $\lambda(i)\in \Lambda^+(n)$ an $n_i$-distinguished partition, for $1\leq i\leq s$.
\end{corollary}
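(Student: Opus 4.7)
The plan is to derive this as a direct specialisation of Theorem 6.5 in the case $m=n$, once one observes that in this range being $n$-good is automatic.

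First I would show that every $\lambda\in \Lambda^+(n)$ is $n$-good. Indeed, since $\l(\lambda)\leq n$, we may take $\mu=\lambda$ in Lemma 2.2: we have $[\nabla_n(\lambda):L_n(\lambda)]=1\neq 0$ and $\l(\mu)\leq n$, so $\lambda$ is $n$-good. Combining this with Corollary 6.6 gives the equivalence: $\lambda$ is $n$-special if and only if $\lambda_1\leq n(l-1)$.

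Next I would feed this into Theorem 6.5 with $m=n$. The ``only if'' direction of the corollary is then immediate: if $\lambda_1\leq n(l-1)$ then $\lambda$ is $n$-special, hence by Theorem 6.5 it can be written as $\lambda=\lambda(1)+\cdots+\lambda(s)$ with $\lambda(i)$ an $n_i$-distinguished partition and $n=n_1+\cdots+n_s$. The only thing one must verify is the length constraint $\l(\lambda(i))\leq n$. This is immediate since the $\lambda(i)$ are partitions (so have nonnegative entries) summing to $\lambda$; hence $\lambda(i)_j \leq \lambda_j$ for all $i,j$, and in particular $\lambda(i)_{n+1}=0$.

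For the ``if'' direction, suppose $\lambda=\lambda(1)+\cdots+\lambda(s)$ with $\lambda(i)$ an $n_i$-distinguished partition and $n=n_1+\cdots+n_s$. By definition of an $n_i$-distinguished partition we have $\lambda(i)_1 \leq (l-n_i)+l(n_i-1)=n_i(l-1)$ (taking the restricted part of size at most $l-n_i$ and the scaled part at most $l(n_i-1)$), and summing gives $\lambda_1 \leq \sum_i n_i(l-1)=n(l-1)$. This is the required bound, completing the proof. The main obstacle is essentially just making sure the ``automatic $n$-goodness'' step is legitimate and that the length-$n$ bookkeeping is handled correctly; the real content is carried by Theorem 6.5 and Corollary 6.6.
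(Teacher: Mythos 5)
Your proof is correct and follows essentially the paper's route: the corollary is the specialisation of the main results (Theorem 6.5/Proposition 6.4 together with Corollary 6.6) at $m=n$, using the observation via Lemma 2.2 (with $\mu=\lambda$) that every $\lambda\in\Lambda^+(n)$ is automatically $n$-good, and your bookkeeping of the length constraint $\lambda(i)\in\Lambda^+(n)$ is right since $\lambda(i)_j\leq\lambda_j$. The only small variation is the \emph{if} direction, where you bound $\lambda(i)_1\leq (l-n_i)+l(n_i-1)=n_i(l-1)$ directly from the definition of an $n_i$-distinguished partition rather than passing through $n$-specialness; this is valid and, if anything, more elementary.
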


\begin{proof}  Clear from Proposition 6.4 and Corollary 6.6.
\end{proof}

\q  Let $G=G(n)$ and let $E$ be the natural module.  An element $\lambda=(\lambda_1,\ldots,\lambda_n)$ of $\Lambda^+(n)$ is called $n(l-1)$-bounded if $\lambda_1\leq n(l-1)$.  For such a partition $\lambda$ we define the truncated tensor product $\barS^\lambda(E)=\barS^{\lambda_1}(E)\otimes \cdots \otimes \barS^{\lambda_n}(E)$. It is clear that if $\mu\in \Lambda^+(n)$ is such that $L(\mu)$ is a composition factor of $\barS^\lambda(E)$ then $\mu_1\leq n(l-1)$, i.e., $\mu$ is $n(l-1)$-bounded.  One therefore obtains a square matrix of decomposition numbers $([\barS^\lambda(E):L(\mu)])$, with $\lambda,\mu$ running over $n(l-1)$-bounded partitions in $\Lambda^+(n)$.  Doty conjectures (in the classical situation)  that this matrix is non-singular, see \cite{Mar}, Conjecture 4.2.11. We note that  each $L(\mu)$ with $\mu$ an $n(l-1)$-bounded partition, appears as the composition factor of some $\barS^\lambda(E)$ - so at least the decomposition matrix conjectured to be non-singular contains no column consisting entirely of zeros.

\begin{corollary}  Let $G=G(n)$ and let $E$ be the natural module.  For each $n(l-1)$-bounded element $\mu$ of $\Lambda^+(n)$ there exists an $n(l-1)$-bounded partition $\lambda\in \Lambda^+(n)$ such that $[\barS^\lambda(E):L(\mu)]\neq 0$.

\end{corollary}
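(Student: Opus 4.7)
The plan is to derive the existence of $\lambda$ directly from the fact that every $n(l-1)$-bounded dominant weight of length at most $n$ is $n$-special, together with the explicit direct sum decomposition of $\barS(E)^{\otimes n}$ into the pieces $\barS^\alpha(E)$.

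First, I would verify that $\mu$ is $n$-special. Since $\mu \in \Lambda^+(n)$ we have $\ell(\mu) \leq n$, and taking $\tau=\mu$ in Lemma 2.2 (since $[\nabla(\mu):L(\mu)]=1$) shows that $\mu$ is $n$-good. Combined with the hypothesis $\mu_1\leq n(l-1)$, Corollary 6.6 (or alternatively Proposition 6.1 applied to the distinguished decomposition of $\mu$ provided by Corollary 6.7) gives that $\mu$ is $n$-special. By the definition of $n$-special, $L(\mu)$ therefore occurs as a composition factor of $\barS(E)^{\otimes n}$.

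Next, I would use the grading. Recall
\[
\barS(E)^{\otimes n} = \bigoplus_{\alpha\in \Lambda(n)} \barS^\alpha(E),
\]
so $L(\mu)$ must appear as a composition factor of $\barS^\alpha(E)$ for at least one sequence $\alpha=(\alpha_1,\ldots,\alpha_n)\in \Lambda(n)$. For this particular $\alpha$, each tensor factor $\barS^{\alpha_i}(E)$ must be nonzero; but the explicit monomial basis of $\barS(E_n)$ recorded at the start of Section 2 shows that $\barS^r(E)=0$ whenever $r>n(l-1)$. Hence $0\leq \alpha_i\leq n(l-1)$ for every $i$.

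Finally, let $\lambda$ be the partition obtained by rearranging the entries of $\alpha$ in weakly decreasing order. Then $\lambda\in \Lambda^+(n)$ and $\lambda_1\leq n(l-1)$, so $\lambda$ is $n(l-1)$-bounded. Since tensor products of $G(n)$-modules are commutative up to canonical isomorphism, $\barS^\lambda(E)\cong \barS^\alpha(E)$ as $G(n)$-modules, so
\[
[\barS^\lambda(E):L(\mu)] = [\barS^\alpha(E):L(\mu)] \neq 0,
\]
as required. There is no real obstacle here; the only thing to check carefully is that being $n$-special is genuinely available for every $n(l-1)$-bounded $\mu\in\Lambda^+(n)$, which is exactly the content of Corollary 6.6 once $n$-goodness is noted to be automatic.
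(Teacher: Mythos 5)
Your proof is correct and follows essentially the same route as the paper's: establish that $\mu$ is $n$-special (you via Corollary 6.6 plus the trivial $n$-goodness of any partition of length at most $n$; the paper via Theorem 6.5 combined with Corollary 6.7, which amounts to the same thing), then use the direct sum decomposition of $\barS(E)^{\otimes n}$ over $\Lambda(n)$ to locate a piece $\barS^\alpha(E)$ containing $L(\mu)$, note that $\barS^r(E)=0$ for $r>n(l-1)$, and sort $\alpha$ into a partition $\lambda$. One small caveat: in the quantised setting the monoidal category of $G(n)$-modules is not obviously symmetric, so appealing to ``tensor products of $G(n)$-modules are commutative up to canonical isomorphism'' is delicate; the paper sidesteps this by noting only that $\barS^\alpha(E)$ and $\barS^\lambda(E)$ have equal \emph{characters} (formal characters multiply commutatively in $\zed X(n)$), which is all that is needed to conclude equal composition multiplicities.
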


\begin{proof} Combining Theorem 6.5 and Corollary 6.7 we have that $[\barS(E)^{\otimes n}:L(\mu)]\neq 0$. Moreover, we have $\barS(E)=\bigoplus_{j=0}^{n(l-1)}\barS^j( E)$. Hence $\barS(E)^{\otimes n}$ is a direct sum of modules of the form $\barS^{j_1}(E)\otimes \cdots\otimes \barS^{j_n}(E)$, for some $0\leq j_1,\ldots,j_n\leq n(l-1)$. Such a module is has the character of  $\barS^\lambda(E)$, for some $n(l-1)$-bounded element $\lambda$ of $\Lambda^+(n)$. Hence we have $[\barS^\lambda(E):L(\mu)]\neq 0$, for some $n(l-1)$-bounded element $\lambda$ of $\Lambda^+(n)$,

\end{proof}

\section*{Acknowledgement}

The second author gratefully acknowledges the financial  support of EPSRC Grant EP/L005328/1.


\bigskip\bigskip
\bigskip\bigskip


\end{document}